\documentclass[11pt]{article}
\usepackage{amsmath,amsthm,amsfonts,amssymb,amscd, amsxtra, mathrsfs,textcomp,verbatim,inputenc}
\usepackage{url}
\usepackage{xcolor}
\usepackage{mathtools}
\usepackage{todonotes} 
\usepackage{cancel}
\usepackage{booktabs}
\usepackage[shortlabels]{enumitem}

\usepackage{soul,colortbl,hhline,rotating,array,multirow,makecell}

\usepackage{graphicx}   % for resizebox
\usepackage{multirow}

\usepackage{subcaption}

\usepackage{soul} % use this (many fancier options)

\usepackage[margin=2.5 cm,nohead]{geometry}
\setlist[enumerate]{label=(\roman*), align=left}
\newtheorem{theorem}{Theorem}
\newtheorem{lemma}[theorem]{Lemma}
\newtheorem{definition}{Definition}
\newtheorem{corollary}[theorem]{Corollary}
\newtheorem{proposition}[theorem]{Proposition}

\newtheorem{remark}{Remark}

\usepackage{graphicx,float}
\usepackage{graphicx}
\usepackage{grffile} % permite nomes de arquivos com espaços

\usepackage{algorithm}
\usepackage{algpseudocode}

\usepackage{tabularx}

\DeclareMathOperator{\grad}{grad}  
  
\usepackage{amssymb}

\algnewcommand{\Input}[1]{%
  \State \textbf{Input:} {\raggedright #1}
}

\algnewcommand{\Initialize}[1]{%
  \State \textbf{Initialize:}
  \Statex \hspace*{\algorithmicindent}\parbox[t]{.8\linewidth}{\raggedright #1}
}

\algnewcommand{\Output}[1]{%
  \State \textbf{Output:} {\raggedright #1}
}

\begin{document}
%%%%%%%%%%%%%%%%%%%%%%%%%%%%%%%%%%%%%%%%%%%%%%%%%%%%%%%%%%%%%
\title{Ball-proximal  point method  on a Hadamard  Manifolds}

\author{
F. Babu   \footnotemark[2] 
\and
 O. P. Ferreira\thanks{Institute of Mathematics and Statistics, Federal University of Goias, Avenida Esperan\c{c}a, s/n, Campus II,  Goi\^ania, GO - 74690-900, Brazil (E-mail: {\tt orizon@ufg.br}, {\tt lfprudente@ufg.br}).}
\and
L. F. Prudente  \footnotemark[1]
\and 
Jen-Chih Yao \thanks{Center for General Education, China Medical University,  Taichung 40402, Taiwan (Email: {\tt yaojc@mail.cmu.edu.tw}, {firoz77b@gmail.com}). }
\and
Xiaopeng Zhao  \thanks{School of Mathematical Sciences, Tiangong University, Tianjin 300387, China. (E-mail: zhaoxiaopeng.2007@163.com)}
}

\maketitle
\begin{abstract}

We consider the problem of minimizing a proper, lower semicontinuous, geodesically convex
function on a Hadamard manifold. Building on ball--proximal (``broximal'') ideas in the
Euclidean setting, viewed as an abstract proximal--type algorithm, we propose and analyse
a Riemannian ball--proximal point method (RB--PPM) whose basic step consists of minimizing
the objective function over a  metric ball centred at the current iterate. We first introduce
the Riemannian broximal map, prove existence and uniqueness of broximal points on Hadamard
manifolds, and derive a KKT--type characterization involving a scalar parameter and the
Riemannian subdifferential. We then show that RB--PPM enjoys a strict decrease of the
squared distance to the solution set whenever the current ball does not contain a minimizer.
This leads to quasi--Fejér monotonicity, finite termination for constant radii, and a
product--form linear decay of the objective values up to the hitting time of the solution set.
We also obtain nonasymptotic complexity bounds for the norms of suitable subgradients and
for the function values, including a linear rate in the number of iterations under constant
radii. Finally, we establish an asymptotic dichotomy,  if the sum of the radii diverges, then
the objective values converge to the optimal value, and, when the solution set is nonempty,
the entire sequence of iterates converges to a minimizer. The resulting scheme provides a geometry aware, ball based analogue of classical Riemannian proximal point methods.

\end{abstract}

\noindent
{\bf Keywords:}  ball-proximal point  method, iteration of complexity,  asymptotic convergence, Hadamard  manifolds.

\medskip
\noindent
{\bf AMS subject classification:}  49J52, 49M15, 65H10, 90C30.

%%%%%%%%%%%%%%%%%%%%%%%%%%%%%%%%%%%
\section{Introduction} 
Throughout the paper we consider the Riemannian optimization problem of the form
\begin{equation} \label{eq:problem}
\min_{p \in \mathcal{M}} f(p),
\end{equation}
where \( \mathcal{M} \) is a Hadamard manifold and 
\( f \colon \mathcal{M} \to \mathbb{R}\cup\{+\infty\} \) 
is a proper, lower semicontinuous, convex function. Recall that a
Hadamard manifold is a complete, simply connected Riemannian manifold with
nonpositive sectional curvature. This class of manifolds offers a particularly
convenient setting for convex analysis beyond Euclidean spaces, namely,  the exponential
map at every point is a global diffeomorphism, any two points are joined by a
unique geodesic, and the squared distance function is convex. 
These geometric features allow one to transplant many constructions from classical
convex optimization to the manifold framework and to design algorithms that take
advantage of the underlying curvature. Problems of the form \eqref{eq:problem}
naturally appear in applications where the variables are modeled in non-Euclidean
domains, such as the cone of symmetric positive definite matrices, flat spaces, and hyperbolic-type models; see, for instance,
\cite{AbsilMahonySepulchre2008,Boumal2020} and the references therein.

Since the seminal works of Martinet and Rockafellar 
\cite{Martinet1970,Rockafellar1976}, the proximal point algorithm (PPA) has become 
a central tool in convex optimization and monotone operator theory, due to its 
robust convergence properties and conceptual simplicity. The basic idea is to 
replace the original problem with  a sequence of regularized subproblems and to use 
their solutions as new iterates. Over the years, numerous variants have been 
developed, combining proximal regularization with projection, splitting, and 
hyperplane techniques; see, for example, the hybrid projection--proximal method of 
Solodov and Svaiter~\cite{SolodovSvaiter2000} and the references therein. In the Riemannian setting, proximal ideas have been successfully adapted to 
Hadamard manifold . The first Riemannian proximal point method for convex 
minimization problems of the form \eqref{eq:problem} was proposed in 
\cite{FerreiraOliveira2002}, and later extended in \cite{LiLopesMartin-Marquez2009} 
to the more general task of finding zeros of monotone vector fields on a Hadamard 
manifolds; see also \cite{Ferreira2006, FerreiraOliveira1998} for related 
developments. In this framework, the classical Riemannian proximal point method 
(R--PPM) computes \(p_{k+1}\) as a solution of a regularized 
subproblem of the form
\begin{equation}\label{eq:ppm}
p_{k+1}\in\arg\min_{q\in\mathcal M}\Big\{ f(q) + \tfrac{\lambda_k}{2} d^2(q,p_k)\Big\},
\end{equation}
where \(\lambda_k>0\) is a penalty parameter. The quadratic term regularizes \(f\)  and at the same time controls the stepsize  through \(\lambda_k\).

Recently, an alternative \emph{ball--proximal} (or \emph{broximal}) viewpoint has
been proposed in Euclidean spaces \cite{gruntkowska2025,gruntkowska2025n}. In the
same conceptual spirit as the classical proximal point method, instead of
regularizing \(f\) with a global quadratic term, the broximal step updates the
iterate as a minimizer of \(f\) over a metric ball around the current point. The
ball constraint plays the role of a local regularization device, keeping the
iterate within a controlled neighbourhood while still allowing for relatively
large moves when the radius is fixed or grows slowly. In the Euclidean setting,
this leads to an abstract proximal--type scheme in which the geometry enters
only through the ball constraint, and the analysis delivers quasi--Fejér
properties, finite termination, and sharp complexity bounds under mild convexity
assumptions, even though the subproblems remain idealized and are mainly used as
a conceptual framework. The idea of minimizing a function over a ball to control the step size and
enforce consistent progress towards a solution also appears in several other
contexts. In Euclidean convex optimization, a ball--minimization primitive has
been used as a powerful oracle for designing accelerated and parallel
algorithms, max--loss minimization schemes, and variants with improved
logarithmic factors and extensions to non--Euclidean geometries; see, e.g.,
\cite{Asi2021,carmon2022,carmon2020,carmon2023,carmon2021,Jambulapati2024,Weigand2024}.
In that line of work, the ball oracle is typically employed as a mechanism for
implementing Monteiro--Svaiter oracles \cite{MonteiroSvaiter2013} under
differentiability and smoothness assumptions. Classical trust--region
algorithms, originating with \cite{Levenberg1944,Marquardt1963} and
systematically developed in \cite{ConnGouldToint2000}, also restrict each step
to a ball around the current iterate, but they do so for a local model of the
objective whose reliability is controlled by the trust--region radius. In
contrast, the BPM viewpoint assumes an exact model and uses the ball purely as a
structural device, thereby explaining why ball constraints arise naturally in
trust--region methods.

 The main purpose of this paper is to introduce and analyse a Riemannian 
ball--proximal point method (RB--PPM) on a Hadamard manifold . The scheme follows 
the broximal approach of \cite{gruntkowska2025,gruntkowska2025n} and is formulated 
entirely in terms of the Riemannian distance, so that all steps and estimates remain 
consistent with the underlying manifold geometry.  Conceptually, our RB--PPM has the same structural features as the Euclidean broximal schemes, namely,  it is an abstract proximal--type method whose iteration map is given by exact minimization over metric balls, providing  a geometry--aware counterpart of these ball--based approaches. Given a current iterate  \(p_k\in\mathcal M\) and a radius \(t_k>0\), the Riemannian broximal step computes
\[
p_{k+1}\in\arg\min\{\,f(q):\,q\in\mathbb{B}(p_k,t_k)\,\},
\]
where \(\mathbb{B}(p_k,t_k)\) denotes the closed metric ball of radius \(t_k\) 
centred at \(p_k\).  On a Hadamard manifold, the Hopf--Rinow theorem ensures that
\(\mathbb{B}(p_k,t_k)\) is compact, so the existence of broximal points follows from
the lower semicontinuity of \(f\). Moreover, if \(f\) is geodesically convex, one
has the following dichotomy:  if \(\mathbb{B}(p_k,t_k)\) intersects the solution
set \(\Omega^*\), then every point in \(\Omega^*\cap\mathbb{B}(p_k,t_k)\) is
broximal;  whereas if \(\mathbb{B}(p_k,t_k)\cap\Omega^*=\varnothing\), there
exists a unique broximal point, which necessarily lies on the boundary
\(\partial\mathbb{B}(p_k,t_k)\). Our main  contributions can be summarized as follows.

\begin{itemize}
\item We define the Riemannian broximal map and the associated RB--PPM, and prove
existence and basic structural properties of broximal points on a Hadamard manifold. In particular, when the
current  metric ball does not intersect the solution set, the broximal point lies
on the boundary sphere and satisfies a KKT-type condition involving a scalar
multiplier and the Riemannian subdifferential of the objective function.

\item Under geodesic convexity, we show that RB--PPM produces a strict reduction in
the squared distance to the solution set whenever the current ball does not contain
a minimizer. This distance decrease implies quasi--Fejér monotonicity with respect
to the solution set, finite termination when the radii are kept constant, and a
linear (product-form) decay of the objective values up to the hitting time of the
solution set.

\item We derive nonasymptotic complexity bounds for both the norms of suitable
subgradients and the objective values, including an inverse–linear rate in the
number of iterations when the radii are constant. These estimates make explicit how
the performance of RB--PPM depends on the chosen sequence of radii and clarify the
trade-off between step length and progress per iteration.

\item We establish an asymptotic dichotomy, namely,  if the sum of the radii along the
iterations diverges, then the objective values converge to the optimal value; and
when the solution set is nonempty, the entire sequence of iterates converges to a
minimizer.
\end{itemize}
The RB--PPM can be viewed as a Riemannian counterpart of the Euclidean broximal 
schemes of \cite{gruntkowska2025,gruntkowska2025n}, and as a geometry--aware 
complement to the classical Riemannian proximal point methods of 
\cite{FerreiraOliveira2002,LiLopesMartin-Marquez2009}. Throughout the paper we 
discuss how the broximal subproblem fits standard Riemannian modelling and how the choice of the radii \(\{t_k\}_{k\in \mathbb N}\) enters the 
nonasymptotic bounds for the distance to the solution set, the function values, and 
suitable subgradient norms.

The remainder of the paper is organized as follows. Section~\ref{sec2} introduces the notation and recalls basic results from Riemannian geometry and subdifferential calculus. Section~\ref{Sec:RB-PPMap} studies the Riemannian broximal map and derives KKT-type optimality conditions for the associated ball-constrained subproblem. Section~\ref{Sec:RB-PPM} presents the RB--PPM and develops its well-posedness, convergence, and complexity analysis. Section~\ref{Sec:NumExp} reports the numerical experiments. Finally, Section~\ref{Sec:SolSupProRB-PPM} contains concluding remarks and directions for future research.

%%%%%%%%%%%%%%%%%%%%%%%%%%%%%%
\section{Notation and terminology} \label{sec2}

In this section, we recall some concepts, notations and basics results about Hadamard manifolds. For more details,  see,   for example, \cite{doCarmo92} and \cite{Sakai96}. Throughout  this paper  $\mathcal M$ represents a finite dimensional Hadamard manifold, 
 $T_p\mathcal M$ the tangent space of $\mathcal M$ at $p$,   $TM=\cup_{p\in M}T_pM$ istangent bundle of $M$ and  ${\cal X}(M)$ the space of smooth vector fields on $M$. The corresponding norm associated to the Riemannian metric $\langle \cdot , \cdot \rangle$
is {represented} by $\lVert\cdot\rVert$. We use $\ell(\gamma)$ to {express} the length of a piecewise smooth curve
$\gamma\colon [a,b] \rightarrow \mathcal M$.  The Riemannian distance between $p$ and $q$ in $\mathcal M$ is  denoted by $d(p,q)$, which induces the original topology on $\mathcal M$, namely, $(\mathcal M, d)$  is a complete metric space and will be denoted by $\mathcal M$. The {\it exponential mapping} $\exp_{p}:T_{p}\mathcal M \rightarrow  \mathcal M $ is defined  by $\exp_{p}v\,=\, \gamma _{p,v}(1)$, where $\gamma _{p,v}$ is the geodesic defined by its  initial position $p$, with velocity $v$ at $p$. Hence, we have $\gamma _{p,v}(t)=\exp_{p}(tv)$. Thus, {\it we will also use the expression $\exp_{p}(tv)$ for denoting  the geodesic   $\gamma_{{p}, v}$ starting  at $p\in \mathcal M$ with velocity $v\in T_p\mathcal M$ at $p$}.  For a $p\in\mathcal M$, the exponential map $\exp_p$ is a diffeomorphism and $\log_p\colon\mathcal M\to T_p\mathcal M$ {indicates} its inverse.  In this case, $d(p,q) = \|\log_pq\|$ holds,  $d(\cdot, q)\colon\mathcal M/\{q\}\to\mathbb{R}$  is $C^{\infty}$  for all $q\in \mathcal M$ and  its gradient is given by ${\rm grad}_{1} d(p, q) = (-\log_pq)/d(q, p)$, for all $q\neq p$, where ${\rm grad}_{1}$ denotes the gradient with respect to first coordinate. In addition,  $d^2(\cdot, q)\colon\mathcal M\to\mathbb{R}$  is $C^{\infty}$  for all $q\in \mathcal M$, and  ${\rm grad}_{1} d^2(p, q) = -2\log_pq$.   Let $\bar{p},\bar{q}\in \mathcal M$ and $(p_{k})_{k\in \mathbb{N}}, (q_{k})_{k\in \mathbb{N}}\subset \mathcal M$ be sequences  such that $\lim_{k\to +\infty} p_{k}=\bar{p}$ and $\lim_{k\to +\infty}q_k=\bar{q}$. Then, for any $q\in M$,  $\lim_{k\to +\infty} \log_{p_{k}}q=  \log_{\bar{p}}q$ and $\lim_{k\to +\infty} \log_qp_{k}= \log_q\bar{p}$ and   $\lim_{k\to +\infty}  \log_{p_{k}}q_k=\log_{\bar{p}}\bar{q}$.  Given $p,q\in\mathcal M$,  {the symbol $\gamma_{pq}(t) := \exp_p\bigl(t\, \log_p q\bigr)$ indicates} the geodesic segment  joining  $p$ to $q$, i.e., $\gamma_{pq}\colon[0,1]\rightarrow\mathcal M$ with $\gamma_{pq}(0)=p$ and $\gamma_{pq}(1)=q$.   Since $M$ is a Hadamard manifold, there exists a unique  geodesic segment  $\gamma_{pq}$  joining $p$ to $q$, its  length  is  equal $d(p,q)$, and the parallel transport along $\gamma_{pq}$ from $p$ to $q$ is denoted by $P_{pq}:T_{p}M\to T_{q}M$.  In the following, we will recall the well-known ``comparison theorem" for triangles in a Hadamard manifold, as stated in \cite[Proposition 4.5]{Sakai96}. 
In the following, we will recall the well-known ``comparison theorem" for triangles in a Hadamard manifold , as stated in \cite[Proposition 4.5]{Sakai96}. 
 \begin{lemma}  \label{le:CosLawF}
Let $\mathcal M$ be a  Hadamard   manifold. The following inequality  holds:
 \begin{equation} \label{eq:coslawF}
d^2({y}, {x})+d^2({y},{z})-2\left\langle  \log_{{y}}{x}, \log_{{y}}{z}\right\rangle\leq d^2({x},{z}),  \qquad   \forall {x}, {y}, {z} \in \mathcal M. 
\end{equation}
In addition, if  the sectional curvature  is $K\equiv 0$ in the whole $\mathcal M$, then \eqref{eq:coslawF} holds as equality.
\end{lemma}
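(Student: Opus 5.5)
The plan is to deduce the inequality from the comparison theory of geodesic triangles in nonpositive curvature. The cleanest route I would take uses Rauch's comparison theorem in the form "$\exp_y$ does not decrease distances," i.e. $\log_y$ is $1$-Lipschitz. Fix $x,y,z\in\mathcal M$ and set $u=\log_y x$, $v=\log_y z$ in $T_y\mathcal M$. Then $d(y,x)=\|u\|$ and $d(y,z)=\|v\|$, and the left-hand side of \eqref{eq:coslawF} equals
\[
\|u\|^2+\|v\|^2-2\langle u,v\rangle=\|u-v\|^2 .
\]
So the claim reduces to
\[
\|\log_y x-\log_y z\|\le d(x,z).
\]

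To prove this reduction, consider the curve $c(s)=\log_y \gamma_{xz}(s)$, $s\in[0,1]$, in $T_y\mathcal M$. It joins $u$ to $v$, so $\|u-v\|\le \ell(c)=\int_0^1\|c'(s)\|\,ds$. Write $c'(s)=(d\log_y)_{\gamma_{xz}(s)}\gamma_{xz}'(s)$. It then suffices to show that
\[
\|(d\exp_y)_w\,\xi\|\ge\|\xi\|
\]
for every $w,\xi\in T_y\mathcal M$. This is the standard consequence of $K\le 0$ via Jacobi fields. The vector $(d\exp_y)_w\xi$ equals $J(1)$, where $J$ is the Jacobi field along $t\mapsto\exp_y(tw)$ with $J(0)=0$ and $J'(0)=\xi$. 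For $J(0)=0$, the nonpositive curvature term makes $\|J\|^2$ convex with $\tfrac{d^2}{dt^2}\tfrac12\|J\|^2\ge\|J'\|^2-\langle R(J,\gamma')\gamma',J\rangle\ge\|J'\|^2$. Combined with the Rauch comparison against the flat case, this gives $\|J(t)\|\ge t\|\xi\|$. Hence $\|c'(s)\|\le\|\gamma_{xz}'(s)\|=d(x,z)$, and integrating yields the inequality.

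The main obstacle is justifying $\|J(1)\|\ge\|J'(0)\|$. Convexity of $\|J\|^2$ alone is not quite enough, so I would either cite the Rauch comparison theorem \cite{doCarmo92} directly or use the paper's reference \cite[Proposition 4.5]{Sakai96}, which already contains the triangle comparison. One alternative is Toponogov's comparison: the Euclidean comparison triangle with sides $\|u\|$, $\|v\|$ and angle $\angle(u,v)$ at the origin has third side at most $d(x,z)$, which is exactly the reduced claim.

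For the equality statement when $K\equiv0$: the Jacobi equation becomes $J''=0$, so $J(t)=t\,P_t\xi$ with $P_t$ parallel transport, and hence $d\exp_y$ is a linear isometry at every point. Therefore $\exp_y$ is a Riemannian isometry from $T_y\mathcal M$ onto $\mathcal M$; it is a diffeomorphism since $\mathcal M$ is Hadamard. It follows that $d(x,z)=\|\log_yx-\log_yz\|$, and the chain of relations above holds with equality.
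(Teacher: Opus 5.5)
Your proof is correct and is essentially the paper's approach. The paper gives no argument beyond citing the triangle comparison theorem \cite[Proposition 4.5]{Sakai96}, and your reduction to $\|\log_y x-\log_y z\|\le d(x,z)$ via the non-contraction of $d\exp_y$ is the standard proof of that proposition. The step you flag can also be closed without invoking Rauch: when $K\le 0$ one has $\langle J'',J\rangle\ge 0$, so by Cauchy--Schwarz the function $t\mapsto\|J(t)\|$ (not merely $\|J\|^2$) is convex wherever $J\neq 0$, which rules out zeros of $J$ for $t>0$ and, together with $\|J(0)\|=0$ and $\lim_{t\to 0^+}\|J(t)\|/t=\|\xi\|$, gives $\|J(t)\|\ge t\|\xi\|$.
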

 For any nonempty subset \(C\subset\mathcal M\), we define the {\it distance from  point \(p\in\mathcal M\) to a set \(C\)} by \(\operatorname{dist}(p,C):=\inf_{q\in C} d(p,q).\)  A subset \(C \subset \mathcal M\) is said to be \emph{convex} if for every pair of points \(p, q \in C\), the geodesic segment 
\(
\gamma_{pq}(t) := \exp_p\bigl(t\, \log_p q\bigr),
\)
for all \(t \in [0,1]\), is entirely contained in \(C\). For every closed convex set \(C \subset \mathcal M\) and any point \(q \in \mathcal M\), there exists a \emph{unique} nearest point in \(C\), denoted by
\[
{\cal P}_C(q) := \arg\min_{x \in C} d(q, x),
\]
called the \emph{metric projection} of \(q\) onto \(C\). The following result characterizes this projection; its proof can be found in~\cite{FerreiraOliveira2002}.

\begin{proposition}\label{prop:proj-ineq}
Let \(C \subset \mathcal M\) be closed and convex, and let \(q \in \mathcal M\). Then, for all \(p \in C\), the projection \({\cal P}_C(q)\) is unique and satisfies
\[
\big\langle \log_{{\cal P}_C(q)} q,\; \log_{{\cal P}_C(q)} p \big\rangle \leq 0.
\]
\end{proposition}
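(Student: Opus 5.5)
Existence of a nearest point follows from compactness. The function $d(q,\cdot)$ is continuous and $C$ is closed and nonempty. Fix any $p_0\in C$. The set $C\cap \mathbb{B}(q,d(q,p_0))$ is closed and bounded, hence compact by Hopf--Rinow, so the infimum defining $\operatorname{dist}(q,C)$ is attained at some $\bar p$.

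For uniqueness and the inequality, I will first derive the variational inequality at any minimizer $\bar p$, and then deduce uniqueness from it. Fix $p\in C$ and consider the geodesic $\gamma(t)=\exp_{\bar p}(t\log_{\bar p}p)$, $t\in[0,1]$. By convexity of $C$ it stays in $C$, so $\varphi(t):=d^2(\gamma(t),q)$ satisfies $\varphi(t)\ge\varphi(0)$ for $t\in[0,1]$, and hence $\varphi'(0^+)\ge 0$. Since $d^2(\cdot,q)$ is smooth with ${\rm grad}_1 d^2(\bar p,q)=-2\log_{\bar p}q$, the chain rule gives
\[
\varphi'(0)=\langle -2\log_{\bar p}q,\ \log_{\bar p}p\rangle \ge 0 .
\]
This is exactly $\langle \log_{\bar p}q,\log_{\bar p}p\rangle\le 0$ for every $p\in C$.

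For uniqueness, suppose $\bar p_1,\bar p_2$ are both nearest points. I apply Lemma~\ref{le:CosLawF} with $y=\bar p_1$, $x=q$, $z=\bar p_2$:
\[
d^2(\bar p_1,q)+d^2(\bar p_1,\bar p_2)-2\langle\log_{\bar p_1}q,\log_{\bar p_1}\bar p_2\rangle\le d^2(q,\bar p_2).
\]
The inner product term is $\le 0$ by the inequality just proved, so $d^2(\bar p_1,q)+d^2(\bar p_1,\bar p_2)\le d^2(q,\bar p_2)$. Since $d(q,\bar p_1)=d(q,\bar p_2)$, this forces $d(\bar p_1,\bar p_2)=0$.

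With uniqueness established, $\mathcal P_C(q)$ is well defined and the inequality holds at it. The only delicate point is the first-order step: one must use one-sided differentiability of $\varphi$ at $0$ and the smoothness of $d^2(\cdot,q)$ on all of $\mathcal M$. Both are recalled in Section~\ref{sec2}, so I expect no real obstacle.
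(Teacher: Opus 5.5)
Your proof is correct. The paper does not prove this proposition; it only cites \cite{FerreiraOliveira2002}. Your argument is therefore a self-contained proof that uses only what Section~\ref{sec2} recalls: Hopf--Rinow, smoothness of $d^2(\cdot,q)$ with ${\rm grad}_1 d^2(\bar p,q)=-2\log_{\bar p}q$, and Lemma~\ref{le:CosLawF}.

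The standard treatment orders things differently. It takes uniqueness from the strict (indeed $2$-strong) convexity of $d^2(\cdot,q)$ on a Hadamard manifold, and then reads off the inequality from the first-order optimality condition for minimizing a convex function over a convex set. You instead prove the variational inequality at an \emph{arbitrary} nearest point first. You then get uniqueness by inserting it into Lemma~\ref{le:CosLawF} with $y=\bar p_1$, $x=q$, $z=\bar p_2$. This ordering is essential, since you need the inequality at $\bar p_1$ before uniqueness is known, and you handle it correctly.

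Both routes rest on the same geometric fact. Lemma~\ref{le:CosLawF} is exactly the first-order form of the $2$-strong convexity of $d^2(\cdot,x)$ (compare Proposition~\ref{pr:f-convex-subdiff}\,(ii) with $s=-2\log_y x$ and $\sigma=2$). Your version simply avoids quoting strong convexity as a separate result. The only thing left implicit is $C\neq\varnothing$, which the existence step needs.
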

The following definition plays an important role in this paper, see \cite[p. 363]{Bourbaki1995}.
\begin{definition} \label{def:linf}
    A function $f\colon\mathcal M\to \mathbb{R}\cup\{+\infty\}$ is said to be
    \emph{lower semi-continuous} (\emph{lsc}), at $p\in \mathcal M$ if  $ \liminf_{q\to p} f(q)= f(p)$.  If $f$ is lower semi-continuous at all points along $\mathcal M$, we simply state that $f$ is \emph{lower semi-continuous}.
\end{definition}
The \emph{domain} of $f\colon\mathcal M \to \mathbb{R}\cup\{+\infty\}$ is represented by
${\rm dom} (f) \coloneqq \{ p\in \mathcal M\ : \ f(p) < +\infty\}$.
The function $f$ is said to be \emph{convex} (resp.\ \emph{strictly convex}) if, for any $p,q\in \mathcal M$, the function
$f\circ{\gamma_{pq}}\colon[0, 1]\to\mathbb{R}$ is convex (resp.\ strictly convex), i.e.,
$(f\circ{\gamma_{pq}})(t)\leq(1-t)f(p)+tf(q)$ (resp.\ $(f\circ{\gamma_{pq}})(t)<(1-t)f(p)+tf(q)$),
for all $t\in[0,1]$.
A function $f\colon\mathcal M \to \mathbb{R}\cup\{+\infty\}$ is said to be
\emph{$\sigma$-strongly convex} for $\sigma > 0$ if, for any
$p,q\in {\rm dom}(f)$, the composition $f\circ{\gamma_{pq}}\colon[0, 1]\to \mathbb{R}\cup\{+\infty\}$ is
$\sigma$-strongly convex, i.e.,
$(f\circ{\gamma_{pq}})(t)\leq(1-t)f(p)+tf(q)-\frac{\sigma}{2}t(1-t)d^2(q,p)$, for all $t\in[0,1]$.
It is worth noting that, by \cite[Lemma~3 and Theorem~2]{Ferreira2008}, whenever $f$ is convex,
the set ${\rm dom}(f)$ is convex.

\begin{definition}
    The \emph{subdifferential} of a proper, convex function
    $f\colon\mathcal M \to \mathbb{R}\cup\{+\infty\}$ at $p\in \mathcal {\rm dom} (f) $ is
 $
        \partial f(p)
        \coloneqq
        \bigl\{
            {s} \in T_p\mathcal M:~
            f(q) \geq f(p)+\langle {s},\exp^{-1}_pq \rangle,~\text{for all } q\in \mathcal M
        \bigr\}.
    $
\end{definition}
The proof of the first item of the following theorem can be found in \cite[Theorem 4.10, p. 76]{Udriste1994}, while the proof of the second one follows the same idea as the first one.
\begin{proposition} \label{pr:f-convex-subdiff}
    Let $f\colon\mathcal M\to \mathbb{R}$ be a function. Then,
    \begin{enumerate}
        \item
        \label{thm:f-convex-subdiffi}
        The function $f$ is convex if and only if
        $f(p)\geq f(q) + \langle {s}, \log_qp\rangle$,
        for all $p, q\in \mathcal M$ and all $s\in \partial f(q)$.
        \item
        \label{thm:f-convex-subdiffii}
        The function $f$ is $\sigma$-strongly convex if and only if
        $f(p)\geq f(q) + \langle {s}, \log_qp \rangle + \frac{\sigma}{2}d^2(p,q)$,
        for all $p, q\in \mathcal M$ and all ${s}\in \partial f(q)$.
    \end{enumerate}
\end{proposition}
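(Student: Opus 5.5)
The statement to be proved is Proposition~\ref{pr:f-convex-subdiff}, and the plan is to reduce both items to one–variable convexity along geodesics, using that $\partial f(q)$ consists of vectors $s$ with $f(p)\ge f(q)+\langle s,\log_q p\rangle$ for all $p$.

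\emph{Item (i).} For ``only if'', the inequality is just the definition of $s\in\partial f(q)$ with $\exp_q^{-1}p=\log_q p$; convexity is used only to ensure $f$ is proper and convex, so that the subdifferential is the one defined for convex functions. For ``if'', fix $p,q$ and $t\in(0,1)$, and set $z=\gamma_{pq}(t)$. If $\partial f(z)\neq\varnothing$, take $s\in\partial f(z)$ and apply the inequality at the points $p$ and $q$:
\[
f(p)\ge f(z)+\langle s,\log_z p\rangle,\qquad f(q)\ge f(z)+\langle s,\log_z q\rangle .
\]
Since $z$ lies on the geodesic segment, $\log_z p=-t\,d(p,q)\,u$ and $\log_z q=(1-t)\,d(p,q)\,u$, where $u$ is the unit tangent of $\gamma_{pq}$ at $z$ (if $p\neq q$). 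Multiply the first inequality by $(1-t)$, the second by $t$, and add; the inner products cancel and we get $f(z)\le(1-t)f(p)+tf(q)$. This requires nonemptiness of $\partial f$ at interior points of geodesics. Here $f$ is real-valued, and I would argue, as in Udriste, that the characterization is meant for functions whose subdifferential is nonempty everywhere. I expect this implicit hypothesis to be the main obstacle, and I would state it explicitly.

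\emph{Item (ii).} For ``only if'', let $s\in\partial f(q)$ and $\varphi(t)=f(\gamma_{qp}(t))$. Strong convexity gives
\[
\frac{\varphi(t)-\varphi(0)}{t}\le f(p)-f(q)-\frac{\sigma}{2}(1-t)\,d^2(p,q).
\]
The subgradient inequality at $q$, applied to $\gamma_{qp}(t)$, gives $\varphi(t)-\varphi(0)\ge t\langle s,\log_q p\rangle$, because $\log_q\gamma_{qp}(t)=t\log_q p$. Combining the two and letting $t\to0^+$ yields the claim. For ``if'', repeat the argument of (i) at $z=\gamma_{pq}(t)$ with the extra quadratic terms. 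Since $d(z,p)=t\,d(p,q)$ and $d(z,q)=(1-t)\,d(p,q)$, the weighted sum of the quadratic terms is
\[
\tfrac{\sigma}{2}\bigl[(1-t)t^2+t(1-t)^2\bigr]d^2(p,q)=\tfrac{\sigma}{2}\,t(1-t)\,d^2(p,q),
\]
which is exactly the strong convexity defect. The obstacle is again the existence of subgradients along the segment, handled under the same hypothesis.
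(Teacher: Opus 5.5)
Your arguments are correct. The weighted sum at $z=\gamma_{pq}(t)$, using $\log_z p=-t\,d(p,q)\,u$ and $\log_z q=(1-t)\,d(p,q)\,u$, gives convexity. The identity $(1-t)t^2+t(1-t)^2=t(1-t)$ produces exactly the strong-convexity defect. The difference-quotient argument along $\gamma_{qp}$, using $\log_q\gamma_{qp}(t)=t\log_q p$ and letting $t\to0^+$, gives the ``only if'' part of (ii). The paper has no argument of its own: it cites Udriste for (i) and says (ii) follows the same idea. What you wrote is a self-contained version of that standard route. For the ``if'' directions you must define $\partial f$ by the same formula for an arbitrary $f$, since the paper defines it only for convex functions.

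The hypothesis you add is not a technicality; it is indispensable, and it is worth recording why. With the paper's definition of $\partial f$, the inequality in (i) holds for \emph{every} function, so the ``if'' directions as printed are false. Take $f(x)=-x^2$ on $\mathbb{R}$, or $f=-d^2(\cdot,o)$ on any Hadamard manifold. Since $f$ decreases quadratically along every geodesic ray, it admits no minorant of the form $f(q)+\langle s,\log_q\cdot\rangle$, so $\partial f(q)=\varnothing$ for every $q$. Both conditions in the proposition then hold vacuously, yet $f$ is neither convex nor strongly convex.

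Be aware of what your fix leaves out. Once everywhere-nonemptiness of $\partial f$ is a standing assumption, your item (i) says only ``subdifferentiable everywhere $\Rightarrow$ convex'', because the displayed inequality is then automatic. The meaningful classical characterization is ``a real-valued $f$ is convex if and only if $\partial f(q)\neq\varnothing$ for every $q$''. Its forward direction is the existence theorem for subgradients of finite convex functions on a finite-dimensional manifold, which your proposal does not prove. That proof needs local Lipschitz continuity, sublinearity of $v\mapsto f'(q;v)$, and a separation argument in $T_q\mathcal M$.

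This does not affect the rest of the paper. Its later appeals to the proposition use only two facts that follow directly from the definition of $\partial f$: the subgradient inequality for a given $s\in\partial f(q)$, and $0\in\partial f$ at global minimizers.
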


The proof of the following result is an immediate consequence of   \cite[Proposition 2.5]{WangLiWangYao2015}.
\begin{proposition}
    \label{cont_subdif}
    Let $f\colon \mathcal M \rightarrow \mathbb{R}\cup\{+\infty\}$ be a convex
    and lower semi-continuous function. Consider the sequence
    $\{p_{k}\}_{k\in\mathbb{N}}\subset {\rm intdom} (f)$ such that $\displaystyle\lim_{k\to\infty}p_{k}={\bar p} \in{\rm intdom}  (f)$.
    If $\{s_{k}\}_{k\in\mathbb{N}}$ is a sequence such that $s_{k}\in \partial f(p_{k})$
    for every $k\in \mathbb{N}$, then $\{s_{k}\}_{k\in\mathbb{N}}$ is bounded and
    its cluster points belong to $\partial f({\bar p}).$
\end{proposition}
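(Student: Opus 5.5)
The plan is to reduce the statement to the Euclidean fact that a convex function on $\mathbb{R}^n$ is locally bounded above, hence locally Lipschitz, in the interior of its domain, and then to pass to limits in the subgradient inequality.

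\textbf{Step 1 (local boundedness).} Fix $\bar p\in{\rm intdom}(f)$ and choose $r>0$ with $\mathbb{B}(\bar p,2r)\subset{\rm intdom}(f)$. I would show that $f$ is bounded above on $\mathbb{B}(\bar p,2r)$. Work in the chart $\exp_{\bar p}$. Choose finitely many points of ${\rm dom}(f)$ whose geodesic convex hull contains a neighbourhood of $\bar p$. Convexity along geodesics then bounds $f$ on iterated geodesic combinations of these points. Alternatively, since $f$ is lsc, the sets $\{f\le n\}\cap\mathbb{B}(\bar p,2r)$ are closed and cover the ball, so Baire's theorem gives one with nonempty interior. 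Convexity along geodesics through $\bar p$ then propagates the bound to a smaller ball around $\bar p$. Lower boundedness near $\bar p$ follows from lsc on the compact ball.

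\textbf{Step 2 (boundedness of $s_k$).} For large $k$ we have $p_k\in\mathbb{B}(\bar p,r)$. Take $s_k\neq0$ and set $q_k:=\exp_{p_k}(r\,s_k/\|s_k\|)$, so $d(q_k,\bar p)\le 2r$. The subgradient inequality gives
\[
r\|s_k\|=\langle s_k,\log_{p_k}q_k\rangle\le f(q_k)-f(p_k)\le M-m,
\]
where $M$ and $m$ are the upper and lower bounds on $\mathbb{B}(\bar p,2r)$. Hence $\|s_k\|\le (M-m)/r$. Because $p_k\to\bar p$, the vectors $s_k$ live in a compact subset of $T\mathcal M$, for example via parallel transport $P_{p_k\bar p}$ to $T_{\bar p}\mathcal M$. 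So cluster points $\bar s\in T_{\bar p}\mathcal M$ exist.

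\textbf{Step 3 (cluster points are subgradients).} Along a subsequence, fix $q\in\mathcal M$ and write
\[
f(q)\ge f(p_k)+\langle s_k,\log_{p_k}q\rangle .
\]
By the continuity of $\log$ recalled in Section~\ref{sec2}, $\langle s_k,\log_{p_k}q\rangle\to\langle\bar s,\log_{\bar p}q\rangle$, interpreting convergence in $T\mathcal M$. To pass to the limit in $f(p_k)$, lsc alone gives the wrong inequality, since it only yields $\liminf f(p_k)\ge f(\bar p)$. So I would use continuity of $f$ at $\bar p$. This follows from Step 1, since a convex function bounded above near an interior point is continuous there, by the standard argument along geodesics through $\bar p$. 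Hence $f(p_k)\to f(\bar p)$, and $\bar s\in\partial f(\bar p)$.

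\textbf{Main obstacle.} The delicate step is Step 1, specifically obtaining an upper bound on a neighbourhood of $\bar p$ in the manifold setting. I would use the Baire argument with lsc, which avoids needing geodesic convex hulls. The other steps are routine given the continuity properties of $\log$.
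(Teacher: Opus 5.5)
The paper gives no argument of its own for this proposition; it only defers to \cite[Proposition~2.5]{WangLiWangYao2015}. Your proposal is therefore a self-contained substitute for a citation, and it is correct. Its ingredients are:
\begin{itemize}
\item a local upper bound near $\bar p$, from Baire plus lower semicontinuity and then convexity;
\item the test point $q_k=\exp_{p_k}(r\,s_k/\|s_k\|)$, giving $\|s_k\|\le (M-m)/r$;
\item compactness of the bounded part of $T\mathcal M$ over a compact set;
\item passage to the limit in the subgradient inequality, using continuity of $(p,q)\mapsto\log_p q$.
\end{itemize}
The citation buys brevity. Your route needs nothing beyond the global $\exp/\log$ structure of a Hadamard manifold and the stated lower semicontinuity.

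There are two corrections, neither of which breaks the proof.

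\textbf{Step~3.} Your remark that lower semicontinuity ``gives the wrong inequality'' is backwards. From $f(q)\ge f(p_k)+\langle s_k,\log_{p_k}q\rangle$ you get
\[
f(q)\ge \liminf_k f(p_k)+\langle \bar s,\log_{\bar p}q\rangle\ge f(\bar p)+\langle \bar s,\log_{\bar p}q\rangle,
\]
which is exactly what is needed. So the continuity detour is unnecessary, though harmless. In any case, continuity along $\{p_k\}$ already follows from Step~2 via $f(p_k)\le f(\bar p)+\|s_k\|\,d(p_k,\bar p)$.

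\textbf{Step~1.} The propagation step should be made explicit, and $r$ must be chosen only afterwards. Suppose Baire gives $f\le n$ on a nonempty open set $U$, and fix $x_0\in U$.
\begin{itemize}
\item For small $\epsilon>0$, take $y:=\exp_{\bar p}(-\epsilon\log_{\bar p}x_0)\in\operatorname{dom}f$. Then $\bar p=\exp_y(\lambda\log_y x_0)$ with $\lambda=\epsilon/(1+\epsilon)\in(0,1)$.
\item The map $u\mapsto\exp_y(\lambda\log_y u)$, i.e.\ $\exp_y\circ(\lambda\,\cdot)\circ\log_y$, is a diffeomorphism of $\mathcal M$. Hence it maps $U$ onto an open neighbourhood of $\bar p$.
\item By geodesic convexity, $f\le\max\{f(y),n\}$ on that neighbourhood.
\end{itemize}
Only then pick $r$ so that $\mathbb{B}(\bar p,2r)$ lies inside this neighbourhood.
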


Next  we  present  the concept of  Fej\'er  convergence, which plays  an important role in the asymptotic convergence analysis  of the  methods studied in this paper.
\begin{definition}\label{def:QuasiFejer}
Let $\{y_{k}\}_{k\in\mathbb{N}}$ be a sequence in the complete metric space $\mathcal{M}$.
We say that $\{y_{k}\}_{k\in\mathbb{N}}$ is \emph{Fejér convergent} to a set $W\subset \mathcal{M}$
if \(d^{2}(y_{k+1},w)\leq d^{2}(y_{k},w)\), for all  \(w\in W\)  and all  \(k\in \mathbb{N}\).
\end{definition}

The main property of  Fej\'er convergent   sequences is stated in the next result, and its proof is similar to  that in \cite[Proposition~3.2 and Theorem 3.3]{CombettesVu2013} with the Euclidean distance replaced by the Riemannian one.
\begin{theorem}\label{teo.qf}
	Let $\{y_{k}\}_{k\in\mathbb{N}}$ be a sequence in a complete metric space $\mathcal{M}$.  If $\{y_{k}\}_{k\in\mathbb{N}}$ is Fej\'er  convergente to a nonempty set $W\subset  \mathcal{M}$, then $\{y_{k}\}_{k\in\mathbb{N}}$ is bounded. Furthermore, if a  cluster point of $\{y_{k}\}_{k\in\mathbb{N}}$ belongs to $W$, then there exists ${\bar y}\in W$ such that $\lim_{k\rightarrow\infty}y_k={\bar y}$.
\end{theorem}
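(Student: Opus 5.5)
The approach is the standard Fejér argument, carried out in the metric space $\mathcal M$, in two parts: boundedness first, then convergence of the whole sequence once some cluster point lies in $W$.

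\emph{Boundedness.} Fix any $w\in W$, which exists since $W$ is nonempty. Fejér convergence says $k\mapsto d^{2}(y_k,w)$ is nonincreasing, so
\[
d(y_k,w)\le d(y_0,w)\qquad\text{for all } k\in\mathbb N .
\]
Hence $\{y_k\}_{k\in\mathbb N}$ lies in the closed ball $\mathbb{B}(w,d(y_0,w))$ and is bounded. The same monotonicity gives, for every $w\in W$, the existence of the limit
\[
\ell(w):=\lim_{k\to\infty} d(y_k,w)\in[0,d(y_0,w)],
\]
because a nonincreasing sequence of nonnegative reals converges.

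\emph{Convergence.} Suppose $\bar y\in W$ is a cluster point, so $y_{k_j}\to\bar y$ for some subsequence. Apply the previous step with $w=\bar y$: the limit $\ell(\bar y)$ exists. Along the subsequence, $d(y_{k_j},\bar y)\to 0$, so $\ell(\bar y)=0$. Therefore the full sequence satisfies $d(y_k,\bar y)\to0$, that is, $\lim_{k\to\infty}y_k=\bar y$, and $\bar y\in W$ as required.

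The hardest point is really only making sure the limit of $d(y_k,w)$ exists for the specific $w=\bar y$. This is exactly where we need the Fejér inequality to hold for all $w\in W$, not just for one reference point, and where we need the cluster point to lie in $W$. Completeness of $\mathcal M$ and the Riemannian structure are not actually needed beyond the metric. In the Hadamard setting, Hopf--Rinow makes bounded sets relatively compact, which guarantees that cluster points exist in later applications, but the theorem as stated assumes a cluster point in $W$ is given.
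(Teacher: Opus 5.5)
Your proof is correct and is essentially the argument the paper intends. The paper gives no separate proof and defers to Combettes--V\~{u} (Proposition~3.2 and Theorem~3.3). Specialized to exact Fej\'er monotonicity and the metric $d$, that argument is what you wrote: $d(y_k,w)\le d(y_0,w)$ gives boundedness and the existence of $\lim_k d(y_k,w)$ for every $w\in W$, and this limit must be zero once $w=\bar y\in W$ is a cluster point.
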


%%%%%%%%%%%%%%%%%%%%%%%%%%%%
\subsection{Relative interior of a convex set}

In this section we adapt the classical notion of relative interior to
geodesically convex subsets of a Hadamard manifold \(\mathcal{M}\), in a way
suited to our broximal analysis. We first recall the notion of a totally
geodesic submanifold, which plays the role of an ``affine hull'' in this
setting, and then define the relative interior of a convex set
\(C\subset\mathcal{M}\) as its interior in the topology induced by
\(\operatorname{aff}(C)\).

We say that a submanifold \(S\subset \mathcal{M}\) is \emph{totally geodesic}
if every geodesic of \(S\), with the induced metric, is also a geodesic of \(\mathcal{M}\).

\begin{definition}\label{def:ri}
Let $C\subset\mathcal{M}$ be a nonempty geodesically convex set. 
The \emph{geodesic affine hull} of $C$ is the smallest totally geodesic 
submanifold of $\mathcal{M}$, denoted by  ${\operatorname{aff}(C)}$ that contains $C$, i.e.,
\[
{\operatorname{aff}(C)}=\bigcap\{S\subset\mathcal{M}:\ S \text{ is a totally geodesic  submanifold and } C\subset S\,\}.
\]
The \emph{relative interior} of $C$, denoted by $\operatorname{ri}(C)$, is the 
interior of $C$ with respect to the topology of ${\operatorname{aff}(C)}$:
\[
\operatorname{ri}(C)=\{\,x\in C : \exists\,r>0 \text{ such that } \mathbb{B}(x,r)\cap {\operatorname{aff}(C)} \subset C\,\}.
\]
Equivalently, $x\in\operatorname{ri}(C)$ if and only if there exists $r>0$ such that
\(
\mathbb{B}_{\operatorname{aff}(C)}(x,r)\subset C,
\)
where $\mathbb{B}_{\operatorname{aff}(C)}(x,r):=\{y\in {\operatorname{aff}(C)}:\ d(x,y)<r\}$ denotes the open metric ball of radius $r$
centred at $x$, taken in the induced Riemannian metric on $\mathcal{N}$.
\end{definition}

We recall that, for a given set $C\subset{\cal M}$, the \emph{closure} of $C$ in $\mathcal{M}$ is denoted by ${\rm cl \,}{C}$ .
\begin{lemma}\label{lem:ri-density}
Let $C\subset\mathcal{M}$ be a nonempty geodesically convex set and assume that
$\operatorname{ri}(C)\neq\varnothing$. Then $\operatorname{ri}(C)$ is dense in
$C$, i.e.,
\(
{\rm cl\,}\operatorname{ri}(C)\;=\;{\rm cl\,}C.
\)
\end{lemma}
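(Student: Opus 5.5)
Since $\operatorname{ri}(C)\subset C$, one inclusion ${\rm cl\,}\operatorname{ri}(C)\subset {\rm cl\,}C$ is immediate. For the reverse inclusion it suffices to show $C\subset{\rm cl\,}\operatorname{ri}(C)$, because then taking closures gives ${\rm cl\,}C\subset{\rm cl\,}\operatorname{ri}(C)$. Fix $x_0\in\operatorname{ri}(C)$, which exists by hypothesis, and an arbitrary $y\in C$. The plan is the Riemannian analogue of the classical ``line segment principle'': every point $\gamma(t)$ of the geodesic segment $\gamma:=\gamma_{x_0y}$ with $t\in[0,1)$ lies in $\operatorname{ri}(C)$. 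Then $\gamma(t)\to y$ as $t\to1^-$ shows $y\in{\rm cl\,}\operatorname{ri}(C)$.

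To prove the segment principle, set $S:=\operatorname{aff}(C)$. By Definition~\ref{def:ri} choose $r>0$ with $\mathbb{B}_S(x_0,r)\subset C$. Fix $t\in[0,1)$ and $z=\gamma(t)$. For a point $w\in S$ with $d(w,z)<\rho$, where $\rho$ is a radius to be chosen, we want to write $w$ as a point on a geodesic segment from some $u\in\mathbb{B}_S(x_0,r)$ to $y$. Then convexity of $C$ gives $w\in C$.

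The natural construction takes the geodesic from $y$ through $w$ and extends it backwards. Set $u:=\exp_y\bigl(\tfrac{1}{1-t}\log_y w\bigr)$, so that $w=\gamma_{uy}(t)$ when parametrized from $u$. Since $S$ is totally geodesic and contains $y$ and $w$, and since it is complete as a closed convex totally geodesic submanifold (being an intersection), the whole geodesic lies in $S$. In particular $u\in S$. It remains to bound $d(u,x_0)$. Note that $x_0=\exp_y\bigl(\tfrac{1}{1-t}\log_y z\bigr)$, so
\[
d(u,x_0)=d\Bigl(\exp_y\bigl(\tfrac{1}{1-t}\log_y w\bigr),\exp_y\bigl(\tfrac{1}{1-t}\log_y z\bigr)\Bigr).
\]
In nonpositive curvature, $\exp_y$ is not distance nonincreasing; rather $\log_y$ is $1$-Lipschitz, by Lemma~\ref{le:CosLawF}. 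So a curvature-free Lipschitz bound is not available globally.

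\textbf{Main obstacle.} This Lipschitz issue is the step I expect to be hardest. The first fix to try is local: $\exp_y$ is smooth, so it is Lipschitz with some constant $L$ on the compact ball of radius $\tfrac{1}{1-t}(d(y,z)+1)$ in $T_yM$. Hence
\[
d(u,x_0)\le \frac{L}{1-t}\,\|\log_y w-\log_y z\|\le \frac{L}{1-t}\,d(w,z),
\]
using that $\log_y$ is $1$-Lipschitz. Choosing $\rho<\min\{1,(1-t)r/L\}$ then gives $u\in\mathbb{B}_S(x_0,r)\subset C$, hence $w\in C$. This shows $\mathbb{B}_S(z,\rho)\subset C$, so $z\in\operatorname{ri}(C)$.

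\textbf{Remaining checks.} Two points need care. First, $\operatorname{aff}(C)$ must be a genuine totally geodesic submanifold, so that extended geodesics stay inside it; I will use that an intersection of totally geodesic submanifolds of a Hadamard manifold, each geodesically complete, is again totally geodesic. Second, $z\in C$ itself, which holds by convexity.
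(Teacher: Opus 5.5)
Your proof is correct, but it establishes the key step (every point $\gamma(t)$, $t\in[0,1)$, of the segment from a relative-interior point to $y$ lies in $\operatorname{ri}(C)$) by a different route from the paper.

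The paper works with the map $\Phi_t(u):=\sigma_{uy}(t)$ on $\operatorname{aff}(C)$. It asserts, via a Jacobi-field and no-conjugate-points argument, that $D\Phi_t(\bar x)$ is invertible. It then uses the inverse function theorem to obtain a neighbourhood $U_t\subset\mathbb{B}_{\operatorname{aff}(C)}(\bar x,r_0)$ whose image $W_t$ is an open neighbourhood of $x_t$ contained in $C$.

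You instead write down the inverse of $\Phi_t$ explicitly. Since $\Phi_t(u)=\exp_y\bigl((1-t)\log_y u\bigr)$, its inverse is $w\mapsto\exp_y\bigl(\tfrac{1}{1-t}\log_y w\bigr)$. You control this map with a local Lipschitz bound on $\exp_y$ and the $1$-Lipschitz property of $\log_y$ from Lemma~\ref{le:CosLawF}. What your route buys:
\begin{itemize}
\item it is elementary and self-contained, and gives an explicit radius $\rho$;
\item it avoids the Jacobi-field claim, which the paper states without proof;
\item it shows the inverse function theorem is unnecessary, because $\Phi_t$ is in fact a global diffeomorphism of $\operatorname{aff}(C)$, so it maps the open ball around $x_0$ onto an open set containing $z$.
\end{itemize}

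What you call the main obstacle is not really one. Mere continuity of $w\mapsto\exp_y\bigl(\tfrac{1}{1-t}\log_y w\bigr)$ at $z$, where it takes the value $x_0$, already gives a $\rho$ with $d(u,x_0)<r$. No Lipschitz constant is needed, so the observation that $\exp_y$ is expanding in nonpositive curvature is beside the point.

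Both arguments rest on the same structural assumption: that $\operatorname{aff}(C)$ is a connected, complete (hence Hadamard) totally geodesic submanifold on which the ambient distance restricts to the intrinsic one. The paper takes this for granted; you rightly flag it as a remaining check.
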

\begin{proof}
Since $\operatorname{ri}(C)\neq\varnothing$, by definition of the relative
interior there exist ${\bar x}\in C$ and $r_0>0$ such that
\begin{equation}\label{eq:ri-ball-x0}
\mathbb{B}_{\operatorname{aff}(C)}({\bar x},r_0)\ \subset\ C,
\end{equation}
where $\mathbb{B}_{\operatorname{aff}(C)}({\bar x},r_0):=\{z\in\operatorname{aff}(C):~ d({\bar x},z)<r_0\}$ is the open metric ball in
$\operatorname{aff}(C)$. In particular, ${\bar x}\in\operatorname{ri}(C)$. Let $y\in C$ with $y\neq{\bar x}$ and let
\(\gamma_{{\bar x}y}:[0,1]\to\operatorname{aff}(C)\) be the unique geodesic
segment joining ${\bar x}$ to $y$. By the geodesic convexity of $C$ we have
\(\gamma_{{\bar x}y}([0,1])\subset C\). We first prove that
\begin{equation}\label{eq:gamma-ri}
\gamma_{{\bar x}y}(t)\ \in\ \operatorname{ri}(C)\qquad\forall\,t\in[0,1).
\end{equation}
Fix $t\in(0,1)$ and set $x_t:=\gamma_{{\bar x}y}(t)$. Since
$\operatorname{aff}(C)$ is a totally geodesic submanifold of the Hadamard
manifold $\mathcal{M}$, it is itself a Hadamard manifold, with the induced
metric and distance. For each $u\in\operatorname{aff}(C)$, let
$\sigma_{uy}:[0,1]\to\operatorname{aff}(C)$ denote the unique geodesic segment
joining $u$ to $y$. Define, for fixed $t\in(0,1)$, the map
\[
\Phi_t:\operatorname{aff}(C)\to\operatorname{aff}(C),\qquad
\Phi_t(u):=\sigma_{uy}(t).
\]
By the smooth dependence of geodesics on their endpoints, $\Phi_t$ is smooth on
$\operatorname{aff}(C)$ and satisfies $\Phi_t({\bar x})=\gamma_{{\bar x}y}(t)
=x_t$. Because $\operatorname{aff}(C)$ is Hadamard, there are no conjugate points
along $\gamma_{{\bar x}y}$. Standard Jacobi–field arguments then imply that the
differential
\(
D\Phi_t({\bar x}):T_{\bar x}\operatorname{aff}(C)\to T_{x_t}\operatorname{aff}(C)
\)
is an isomorphism for every $t\in(0,1)$. Hence, by the inverse function
theorem, there exist open neighbourhoods $U_t$ of ${\bar x}$ and $W_t$ of
$x_t$ in $\operatorname{aff}(C)$ such that
\(
\Phi_t:U_t \to W_t
\)
is a diffeomorphism. Shrinking $U_t$ if necessary, we may assume that
\(
U_t \subset \mathbb{B}_{\operatorname{aff}(C)}({\bar x},r_0)  \subset C,
\)
where the last inclusion follows from \eqref{eq:ri-ball-x0}. For any $u\in U_t$, both $u$ and $y$ belong to $C$, and $C$ is geodesically
convex; hence the geodesic segment $\sigma_{uy}([0,1])$ is contained in $C$.
In particular, we have \(\Phi_t(u)=\sigma_{uy}(t)\in C\), for all \(u\in U_t\). Thus, 
\(
W_t=\Phi_t(U_t)\ \subset\ C.
\)
Since $W_t$ is an open neighbourhood of $x_t$ in $\operatorname{aff}(C)$ and
$W_t\subset C$, the definition of the relative interior implies that
$x_t\in\operatorname{ri}(C)$. As $t\in(0,1)$ is arbitrary, we obtain
\eqref{eq:gamma-ri}. We now proceed to  prove the density statement. The inclusion
\(
{\rm cl\,}\operatorname{ri}(C)\subset{\rm cl\,}C
\)
is immediate from $\operatorname{ri}(C)\subset C$. For the reverse inclusion,
let $y\in C$ be arbitrary. If $y={\bar x}$, then $y\in\operatorname{ri}(C)$ and
hence $y\in{\rm cl\,}\operatorname{ri}(C)$. If $y\neq{\bar x}$, consider the
geodesic $\gamma_{{\bar x}y}$ as above and define $t_n:=1-\frac{1}{n}$ for
$n\ge2$. Then $t_n\in[0,1)$, $\lim_{n\to +\infty}t_n=1$ and
\(
\lim_{n\to +\infty}\gamma_{{\bar x}y}(t_n)=\gamma_{{\bar x}y}(1)=y
\)
By \eqref{eq:gamma-ri}, $\gamma_{{\bar x}y}(t_n)\in\operatorname{ri}(C)$ for all $n$, so
$y\in{\rm cl\,}\operatorname{ri}(C)$. Since $y\in C$ is  arbitrary, we obtain
\(
C\subset{\rm cl\,}\operatorname{ri}(C).
\)
Both sides yields
\(
{\rm cl\,}C\subset{\rm cl\,}\operatorname{ri}(C).
\)
Combining the two inclusions
\(
{\rm cl\,}\operatorname{ri}(C)\subset{\rm cl\,}C
\)
and
\(
{\rm cl\,}C\subset{\rm cl\,}\operatorname{ri}(C),
\)
we conclude that
\(
{\rm cl\,}\operatorname{ri}(C)\;=\;{\rm cl\,}C,
\)
as claimed.
\end{proof}

\begin{lemma}\label{lem:ri-ball-intersect}
Let $\mathcal{M}$ be a finite-dimensional Hadamard manifold with distance $d$,
and let $f:\mathcal{M}\to\mathbb{R}\cup\{+\infty\}$ be proper, lower semicontinuous
and geodesically convex, with $\operatorname{ri}(\operatorname{dom}f)\neq\varnothing$.
Fix $p\in\operatorname{dom}f$ and $t>0$. Then
\(
\operatorname{ri}\big(\mathbb{B}(p,t)\big)\ \cap\
\operatorname{ri}\big(\operatorname{dom}f\big)\ \neq\ \varnothing.
\)
\end{lemma}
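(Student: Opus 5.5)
The plan is to first identify $\operatorname{ri}(\mathbb{B}(p,t))$ and then produce an explicit point of $\operatorname{ri}(\operatorname{dom}f)$ close to $p$. Since $\mathcal M$ is Hadamard, $\exp_p$ is a global diffeomorphism, so the closed ball $\mathbb{B}(p,t)=\exp_p(\{v:\|v\|\le t\})$ has nonempty interior in $\mathcal M$. Consequently its geodesic affine hull is all of $\mathcal M$, because a totally geodesic submanifold containing an open set of $\mathcal M$ must have full dimension and, being complete and totally geodesic, coincides with $\mathcal M$. It follows that $\operatorname{ri}(\mathbb{B}(p,t))$ equals the topological interior of $\mathbb{B}(p,t)$ in $\mathcal M$, and this interior contains the open ball $\{q:\ d(p,q)<t\}$. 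The statement therefore reduces to exhibiting a point $x\in\operatorname{ri}(\operatorname{dom}f)$ with $d(p,x)<t$.

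To produce such a point, I would use that $\operatorname{dom} f$ is convex (by the remark citing \cite{Ferreira2008}) and that $\operatorname{ri}(\operatorname{dom}f)\neq\varnothing$ by hypothesis. Lemma~\ref{lem:ri-density} then gives ${\rm cl\,}\operatorname{ri}(\operatorname{dom}f)={\rm cl\,}\operatorname{dom}f$. Since $p\in\operatorname{dom}f\subset{\rm cl\,}\operatorname{ri}(\operatorname{dom}f)$, there is a sequence $x_n\in\operatorname{ri}(\operatorname{dom}f)$ with $x_n\to p$. Hence $d(p,x_n)<t$ for all large $n$, and any such $x_n$ lies in both relative interiors. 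Alternatively, and more concretely, I can invoke \eqref{eq:gamma-ri} from the proof of Lemma~\ref{lem:ri-density}: taking $\bar x\in\operatorname{ri}(\operatorname{dom}f)$ and $y=p$, the points $\gamma_{\bar x p}(s)$ with $s<1$ close to $1$ lie in $\operatorname{ri}(\operatorname{dom}f)$ and satisfy $d(\gamma_{\bar x p}(s),p)=(1-s)d(\bar x,p)<t$.

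I expect the main obstacle to be the first step, namely justifying rigorously that $\operatorname{aff}(\mathbb{B}(p,t))=\mathcal M$, so that the relative interior of the ball is its ordinary interior. I would argue that any totally geodesic submanifold $S$ containing the ball contains every geodesic $\exp_p(sv)$ for $\|v\|\le t$. Since $S$ is totally geodesic and contains a segment of each such geodesic through $p$, we get $T_pS=T_p\mathcal M$. A complete totally geodesic submanifold satisfies $S=\exp_p(T_pS)$, and here this gives $S=\mathcal M$. Should completeness of $S$ be an issue, it suffices to note directly that the open ball of radius $t$ is open in $\mathcal M$ and hence open in whatever $\operatorname{aff}$ is, since that hull contains it. Every point of the open ball is then a relative interior point, and the density argument above closes the proof.
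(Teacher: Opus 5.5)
Your proof is correct and follows essentially the same route as the paper. Convexity of $\operatorname{dom}f$ and Lemma~\ref{lem:ri-density} give a sequence in $\operatorname{ri}(\operatorname{dom}f)$ converging to $p$; since the open ball $\{q:\ d(p,q)<t\}$ lies in $\operatorname{ri}(\mathbb{B}(p,t))$, the late terms of that sequence lie in both sets.

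Your fallback is in fact the cleanest way to justify the step the paper only asserts. A point of the open ball has a small closed ball around it contained in $\mathbb{B}(p,t)$, so it is a relative interior point whatever $\operatorname{aff}(\mathbb{B}(p,t))$ turns out to be. Using $d(p,x_n)<t$ directly is also more precise than the paper's phrasing via $p\in{\rm cl\,}\operatorname{ri}(\mathbb{B}(p,t))$.
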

\begin{proof}
By \cite[Lemma~3 and Theorem~2]{Ferreira2008}, the set $\operatorname{dom}f$ is
geodesically convex. Since $\operatorname{ri}(\operatorname{dom}f)\neq\varnothing$,
we can apply Lemma~\ref{lem:ri-density} with $C=\operatorname{dom}f$ to  obtain
\(
{\rm cl\,}\operatorname{ri}(\operatorname{dom}f)={\rm cl\,}\operatorname{dom}f.
\)
In particular,
\[
\operatorname{dom}f\ \subset\ {\rm cl\,}\operatorname{dom}f=
{\rm cl\,}\operatorname{ri}(\operatorname{dom}f).
\]
Thus,  $p\in{\rm cl\,}\operatorname{ri}(\operatorname{dom}f)$. Hence there exists a
sequence $\{z_k\}_{k\in\mathbb{N}}\subset\operatorname{ri}(\operatorname{dom}f)$
such that $\lim_{k\to +\infty}z_k= p$. On the other hand, by the definition of the relative interior and the fact that
$\mathbb{B}(p,t)$ has full geodesic affine hull, we have
\(
\operatorname{ri}\big(\mathbb{B}(p,t)\big)=\{\,q\in\mathcal{M}: d(p,q)<t\,\},
\)
so $p\in{\rm cl\,}\operatorname{ri}(\mathbb{B}(p,t))$. Since
$\operatorname{ri}(\mathbb{B}(p,t))$ is open in $\mathcal{M}$ and
$\lim_{k\to +\infty}z_k= p\in{\rm cl\,}\operatorname{ri}(\mathbb{B}(p,t))$, there exists
${\bar k}\in\mathbb{N}$ such that
\(
z_k\in\operatorname{ri}\big(\mathbb{B}(p,t)\big), 
\)
for all $k\geq {\bar k}$. For such $k$ we have simultaneously $z_k\in\operatorname{ri}\big(\operatorname{dom}f\big)$ and $z_k\in\operatorname{ri}\big(\mathbb{B}(p,t)\big)$, 
which shows that
\(
\operatorname{ri}\big(\mathbb{B}(p,t)\big)\cap
\operatorname{ri}\big(\operatorname{dom}f\big)\neq\varnothing.
\)
This completes the proof.
\end{proof}

%%%%%%%%%%%%%%%%%%%%%%%%%%%%%%%
\section{Riemannian ball-proximal  map} \label{Sec:RB-PPMap}
This section introduces the {\it ball–proximal (broximal) map} in the Riemannian setting, obtained by minimizing the objective over a metric ball. This construction is the manifold analogue of Euclidean ball–proximal steps  and will form the basis for the Riemannian ball-proximal   point method.  Section~\ref{ses:BroxSup} analyzes the ball-proximal   subproblem,  we derive the corresponding optimality conditions, establishs the existence of ball-proximal   points, and describe how their location depends on whether the ball intersects the solution set. In Section~\ref{ses:PropBroxMap} we develop some structural properties of the ball-proximal   map, including boundary location and uniqueness when the ball does not intersect the solution set. These results provide the foundation needed to introduce the algorithm and carry out its convergence analysis in the next section.

Let \( \mathcal{M} \) be a complete Riemannian manifold with associated Riemannian distance \(d\).
For \(p\in\mathcal{M}\) and \(t>0\), denote the {\it closed metric ball } and its boundary, {\it the intrinsic sphere},  by
\[
\mathbb{B}(p,t):=\{\,q\in\mathcal{M}:\ d(p,q)\le t\,\}, 
\qquad 
\mathbb{S}(p,t):=\{\,q\in\mathcal{M}:\ d(p,q)=t\,\}.
\]
For a extended-real-valued proper, lower semicontinuous, and geodesically convex  function  \(f:\mathcal{M}\to\mathbb{R}\cup\{+\infty\}\), define the set of {\it minimizers and the optimal value} by
\begin{equation}\label{eq:solset}
{\Omega^*}:=\arg\min_{p\in\mathcal{M}} f(p),\qquad \qquad  {f^*}:=\inf_{p\in\mathcal{M}} f(p).
\end{equation}

{\it Throughout this paper we do not assume that $\Omega^*$ is nonempty, except when explicitly stated. However, we do assume that the optimal value $f^*>-\infty$.} {\it  In addition, we assume that $\operatorname{ri}(\operatorname{dom}f)\neq\varnothing$.} Next, we introduce the \emph{ball-proximal} map, the Riemannian counterpart of the Euclidean operator introduced in \cite{gruntkowska2025, gruntkowska2025n}. 
\begin{definition}\label{def:Rbrox}
Let $f:\mathcal{M}\to\mathbb{R}\cup\{+\infty\}$ be a function  and $t>0$. The  \emph{ball-proximal (broximal)  map} of $f$ with 
radius $t$ is defined, for each $p\in\mathcal{M}$, by
\begin{equation} \label{eq:DefRbrox}
\operatorname{brox}^{t}_{f}(p)
:=\arg\min\{\,f(q):\,q\in\mathbb{B}(p,t)\,\}.
\end{equation} 
\end{definition}

In general, $\operatorname{brox}^{t}_{f}(\cdot)$ is set-valued. On a complete Riemannian manifold,
Hopf–Rinow implies that every closed metric ball $\mathbb{B}(p,t)$ is compact; thus, if $f$ is proper and
lower semicontinuous, it attains its minimum on $\mathbb{B}(p,t)$, so $\operatorname{brox}^{t}_{f}(p)\neq\varnothing$
for all $p$ and $t>0$. On a Hadamard manifold, if $f$ is geodesically convex, then the geodesic convexity
of $\mathbb{B}(p,t)$ further implies the uniqueness of the ball-proximal   point under natural conditions, for example
when the ball does not intersect the solution set $\Omega^*$. These facts are made precise in the next two
theorems.

%%%%%%%%%%%%%%%%%%%%%%%%%%%%%%%
\subsection{The  ball-proximal  subproblem} \label{ses:BroxSup}

In this section we recall a few basic notions and results from convex analysis on manifolds (see, for example, \cite{LiLopesMartin-Marquez2009,LiMordukhovichWang2011}) and then state KKT-type optimality conditions for the  {\it ball-proximal   subproblem} 
\begin{equation} \label{eq:Bsubprof}
\min\{\,f(q):~q\in\mathbb{B}(p,t) \}, 
\end{equation} 
 that defines the ball-proximal   map in ~\eqref{eq:DefRbrox}. Let $C\subset\mathcal{M}$ be a closed, geodesically convex set. 
Its {\it normal cone} at $q\in C$ is defined by 
\begin{equation*}
N_C(q)\ :=\ \big\{\eta\in T_q\mathcal{M}\;:\;\langle \eta,\log_q y\rangle \le 0\ \ \forall\,y\in C\big\},
\end{equation*}
where $\log_q y$ denotes the Riemannian logarithm (the initial velocity of the minimizing geodesic from $q$ to $y$).  We set $N_C(q)=\varnothing$ for $q\notin C$. The \emph{indicator function} of $C$ is the mapping $\delta_C:\mathcal{M}\to\mathbb{R}\cup\{+\infty\}$ defined by
\[
\delta_C(q):=
\begin{cases}
0, & \text{if } q\in C,\\[0.3ex]
+\infty, & \text{if } q\notin C.
\end{cases}
\]
The indicator satisfies $\operatorname{dom}\delta_C=C$, so $\delta_C$ is proper if and only if $C\neq\varnothing$. If $C$ is closed, then $\delta_C$ is lower semicontinuous, and if $C$ is geodesically convex, then $\delta_C$ is geodesically convex. Moreover, for $q\in C$ its subdifferential equals the normal cone, i.e., 
\(
\partial\delta_C(q)=N_C(q),
\)
while $\partial\delta_C(q)=\varnothing$ for $q\notin C$.  Using the notions introduced above, we now  state  a result that is a consequence of the results  of \cite{LiLopesMartin-Marquez2009}.
\begin{lemma}\label{le:visp}
Let $\mathcal{M}$ be a Hadamard manifold, let $f:\mathcal{M}\to\mathbb{R}\cup\{+\infty\}$ be proper, lower semicontinuous, and geodesically convex, and let $C\subset\mathcal{M}$ be a closed, geodesically convex set. 
Assume the standard constraint qualification $\operatorname{ri}(\operatorname{dom}f)\cap\operatorname{ri}(C)\neq\varnothing$. 
Then, 
\[
\partial\big(f+\delta_C\big)(p)=\partial f(p)\;+\;N_C(p), \qquad \forall p\in C\cap \operatorname{ri}(\operatorname{dom}f).
\]
\end{lemma}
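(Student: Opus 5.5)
The inclusion $\partial f(p)+N_C(p)\subset\partial(f+\delta_C)(p)$ is elementary and I will prove it first. Take $s\in\partial f(p)$ and $\eta\in N_C(p)$, and fix any $q\in\mathcal M$. If $q\notin C$, then $(f+\delta_C)(q)=+\infty$ and there is nothing to check. If $q\in C$, the subgradient inequality gives $f(q)\ge f(p)+\langle s,\log_pq\rangle$, and the normal-cone definition gives $0\ge\langle\eta,\log_pq\rangle$. Adding these two yields
\[
(f+\delta_C)(q)\ \ge\ (f+\delta_C)(p)+\langle s+\eta,\log_pq\rangle,
\]
which is exactly the statement $s+\eta\in\partial(f+\delta_C)(p)$. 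This step needs neither the constraint qualification nor $p\in\operatorname{ri}(\operatorname{dom}f)$.

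The reverse inclusion is the real content. I would obtain it from the Riemannian subdifferential sum rule of \cite{LiLopesMartin-Marquez2009}, see also \cite{LiMordukhovichWang2011}. The key observation is that $\partial g(p)$ depends only on the behaviour of $g$ in a neighbourhood of $p$ through $\exp_p$. Pulling back along $\exp_p$, which is a global diffeomorphism, the statement becomes a statement about the directional derivatives $f'(p;\cdot)$ and $\delta_C'(p;\cdot)$. These are sublinear functions on $T_p\mathcal M$, by geodesic convexity. Concretely, I would proceed in three steps:
\begin{enumerate}
\item Show that $\partial g(p)=\{v: \langle v,u\rangle\le g'(p;u)\ \forall u\}$ for convex $g$. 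This follows from monotonicity of the difference quotients of $g\circ\gamma$ along geodesics.
\item Note that $(f+\delta_C)'(p;\cdot)=f'(p;\cdot)+\delta_C'(p;\cdot)$, since directional derivatives add.
\item Apply the Euclidean sum rule to the two sublinear functions on $T_p\mathcal M$. Here $\delta_C'(p;\cdot)$ is the indicator of the tangent cone $T_C(p)$, whose polar is $N_C(p)$.
\end{enumerate}

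The main obstacle is step 3. There I must verify a qualification condition for the two sublinear functions, namely that $\operatorname{ri}(\operatorname{dom}f'(p;\cdot))\cap\operatorname{ri}(T_C(p))\neq\varnothing$. Without such a condition the Euclidean sum rule can fail. My plan is to use the hypothesis $\operatorname{ri}(\operatorname{dom}f)\cap\operatorname{ri}(C)\neq\varnothing$: pick a point $z$ in this intersection and take $u=\log_pz$. Then $u$ lies in the interior of the relevant cones, relative to their spans, by an argument like the one in Lemma~\ref{lem:ri-density}. That argument moves a small relative ball around $z$ along geodesics toward $p$. The assumption $p\in\operatorname{ri}(\operatorname{dom}f)$ makes $\operatorname{dom}f'(p;\cdot)$ the whole tangent space of $\operatorname{aff}(\operatorname{dom}f)$ at $p$, which simplifies this verification considerably.

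Alternatively, I could cite \cite{LiLopesMartin-Marquez2009} directly for this step, since the lemma is stated as a consequence of their results. In that case I would only check that their hypotheses are met by the present assumptions: $f$ and $\delta_C$ are proper, lsc and convex, and the qualification condition holds.
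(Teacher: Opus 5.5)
Your proof of $\partial f(p)+N_C(p)\subset\partial(f+\delta_C)(p)$ is correct. Your fallback for the reverse inclusion, checking the hypotheses and invoking \cite{LiLopesMartin-Marquez2009}, is exactly what the paper does; it gives no argument beyond that citation. Your self-contained tangent-space route, however, has a gap, and it is not the one you flagged. The unjustified step is the claim that $f'(p;\cdot)$ and $\delta_C'(p;\cdot)$ are sublinear on $T_p\mathcal M$ ``by geodesic convexity''. The Euclidean proof of subadditivity uses the fact that $x+t(u+w)$ is the midpoint of $x+2tu$ and $x+2tw$. On a curved manifold, $\exp_p(t(u+w))$ is only $o(t)$-close to the midpoint of the geodesic joining $\exp_p(2tu)$ and $\exp_p(2tw)$. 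For $f$ this is repairable: $p\in\operatorname{ri}(\operatorname{dom}f)$ makes $f$ locally Lipschitz on $\operatorname{aff}(\operatorname{dom}f)$ near $p$, so the $o(t)$ discrepancy disappears in the limit. For $\delta_C$ at a boundary point there is no such control. Moreover, $\delta_C'(p;\cdot)$ is not the indicator of the tangent cone. It is the indicator of the feasible-direction cone $F_C(p)=\{u:\ \exp_p(tu)\in C \text{ for some } t>0\}=\mathbb{R}_+\log_p(C)$. This cone need not be closed, and it is not evidently convex, because $\log_p(C)$ is in general not convex. Without convexity, the Euclidean sum rule in step 3 cannot be invoked.

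What you can prove, using $\tfrac{2}{s}\log_p m_s\to u+w$ for the midpoint $m_s$ of the geodesic joining $\exp_p(su)$ and $\exp_p(sw)$, is that the closure $T_C(p):=\operatorname{cl}F_C(p)$ is convex; its polar is still $N_C(p)$. To apply the Euclidean sum rule to $f'(p;\cdot)+\delta_{T_C(p)}$, you must then extend the inequality $\langle v,u\rangle\le f'(p;u)$. You know it only for $u\in F_C(p)$, and you need it for all $u\in T_C(p)\cap T_p\operatorname{aff}(\operatorname{dom}f)$. Since $f'(p;\cdot)$ is continuous on $T_p\operatorname{aff}(\operatorname{dom}f)$, this amounts to density of $F_C(p)\cap T_p\operatorname{aff}(\operatorname{dom}f)$ in that set. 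That is a tangent-cone intersection property, which itself needs the qualification condition and has to be proved. By contrast, the condition you called the main obstacle is easy. $\log_p$ restricts to a diffeomorphism of $\operatorname{aff}(C)$ onto $T_p\operatorname{aff}(C)$, so it carries a relative neighbourhood of $z$ onto a relative neighbourhood of $\log_p z$. This puts $\log_p z$ directly in $\operatorname{ri}T_C(p)\cap T_p\operatorname{aff}(\operatorname{dom}f)$, with no geodesic-sweeping argument in the style of Lemma~\ref{lem:ri-density} needed.
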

We now state the KKT-type optimality conditions for the minimizing a geodesically convex function over a closed metric ball. 
The result below characterizes the solution through the associated multiplier and its relation to the subdifferential of~$f$. This result plays a central role in the analysis of the ball-proximal   subproblem.
\begin{theorem}\label{thm:KKT-ball}
Let $\mathcal{M}$ be a Hadamard manifold with distance $d$, let $f:\mathcal{M}\to\mathbb{R}\cup\{+\infty\}$ be proper, lower semicontinuous, and geodesically convex.  Take $p\in (\operatorname{dom}f)$ and $t>0$, and define the closed metric ball 
\(
\mathbb{B}(p,t):=\{\,q\in\mathcal{M}:\ d(p,q)\le t\,\}.
\)
Then ${p_{+}} \in\mathbb{B}(p,t)$ solves the ball-proximal   subproblem  \eqref{eq:Bsubprof} if and only if there exists a multiplier $ \theta_t(p) \ge 0$ such that the following KKT-conditions hold:
\begin{equation} \label{eq:KKT-ball-stat}
 \mathbf{0}\in \partial f({p_{+}} ) \;+\;  \theta_t(p) \big(-\log_{{p_{+}} }p\big), \qquad d(p,{p_{+}} )\le t, \qquad   \theta_t(p) \big(d(p,{p_{+}} )-t\big)=0, 
\end{equation}
Equivalently, the stationarity condition in  \eqref{eq:KKT-ball-stat} is that there exists a multiplier \(\theta_t(p) \ge 0\) such that 
\(
 \theta_t(p) \log_{{p_{+}} }p \in \partial f({p_{+}} ).
\)
\end{theorem}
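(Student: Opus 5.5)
The plan is to reduce the constrained problem to an unconstrained one, $\min_{q\in\mathcal M}\,(f+\delta_{\mathbb{B}(p,t)})(q)$. For a proper convex function, $p_+$ is a global minimizer if and only if $\mathbf 0\in\partial(f+\delta_{\mathbb{B}(p,t)})(p_+)$; this is immediate from the definition of the subdifferential. The ball $C:=\mathbb{B}(p,t)$ is closed and geodesically convex, because $d(\cdot,p)$ is convex on a Hadamard manifold. Lemma~\ref{lem:ri-ball-intersect} gives the qualification $\operatorname{ri}(\operatorname{dom}f)\cap\operatorname{ri}(C)\neq\varnothing$. Lemma~\ref{le:visp} then yields $\partial(f+\delta_C)(p_+)=\partial f(p_+)+N_C(p_+)$. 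So optimality reduces to $\mathbf 0\in\partial f(p_+)+N_C(p_+)$, and the remaining task is to compute $N_C(p_+)$.

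\textbf{Normal cone computation.} The claim is that $N_C(q)=\{\mathbf 0\}$ when $d(p,q)<t$, and $N_C(q)=\{-\theta\log_q p:\theta\ge0\}$ when $d(p,q)=t$.

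For an interior point $q$, the vectors $\log_q y$ with $y\in C$ fill a neighbourhood of $0$ in $T_q\mathcal M$. The condition $\langle\eta,\log_q y\rangle\le0$ for all such $y$ then forces $\eta=\mathbf 0$.

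For a boundary point $q$, first take $\eta=-\theta\log_q p$. Apply Lemma~\ref{le:CosLawF} at vertex $q$ with the triangle $p,q,y$:
\[
\langle\log_q p,\log_q y\rangle\ \ge\ \tfrac12\big(d^2(q,p)+d^2(q,y)-d^2(p,y)\big)\ \ge\ \tfrac12\big(t^2-t^2\big)=0
\]
for every $y\in C$. Hence $\langle\eta,\log_q y\rangle\le0$, and this cone lies in $N_C(q)$.

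For the converse inclusion, use that $C=\{g\le t^2\}$ with $g=d^2(\cdot,p)$ smooth and $\operatorname{grad} g(q)=-2\log_q p\neq0$. Take $\eta\in N_C(q)$ and any $v$ with $\langle v,\log_q p\rangle>0$. Then $g(\exp_q(sv))<t^2$ for small $s>0$, so $\exp_q(sv)\in C$ and $\log_q\exp_q(sv)=sv$. This gives $\langle\eta,v\rangle\le0$ on the open half-space $\{v:\langle v,\log_q p\rangle>0\}$, and by continuity on its closure. A vector nonpositive on a closed half-space is a nonpositive multiple of its inner normal, so $\eta=-\theta\log_q p$ with $\theta\ge0$.

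\textbf{Conclusion.} Combining the two cases, $\mathbf 0\in\partial f(p_+)+N_C(p_+)$ is exactly the existence of $\theta_t(p)\ge0$ with $\mathbf 0\in\partial f(p_+)-\theta_t(p)\log_{p_+}p$. Complementary slackness $\theta_t(p)\,(d(p,p_+)-t)=0$ holds because $\theta_t(p)=0$ is forced in the interior. The equivalent form $\theta_t(p)\log_{p_+}p\in\partial f(p_+)$ is a rearrangement.

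For sufficiency, one can bypass Lemma~\ref{le:visp}. If $s=\theta\log_{p_+}p\in\partial f(p_+)$, then for every $y\in C$ we have $f(y)\ge f(p_+)+\theta\langle\log_{p_+}p,\log_{p_+}y\rangle\ge f(p_+)$, using the cosine-law inequality above when $\theta>0$, which by slackness means $p_+$ is on the sphere.

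\textbf{Main obstacle.} The delicate point is the necessity direction. Lemma~\ref{le:visp} is stated only for $p_+\in\operatorname{ri}(\operatorname{dom}f)$, and $\partial f(p_+)$ is only meaningful for $p_+\in\operatorname{dom}f$. I would handle this by invoking the sum rule as in \cite{LiLopesMartin-Marquez2009} under the stated qualification, where the restriction is not essential. Failing that, I would add a remark that the characterization is applied at points $p_+\in\operatorname{ri}(\operatorname{dom}f)$.
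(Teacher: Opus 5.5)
Your argument follows the paper's proof: you reduce optimality to $\mathbf 0\in\partial f(p_+)+N_{\mathbb B(p,t)}(p_+)$ via Lemma~\ref{le:visp}, with the qualification supplied by Lemma~\ref{lem:ri-ball-intersect}, and then split the normal cone into the interior case $\{\mathbf 0\}$ and the boundary ray generated by $-\log_{p_+}p$; your cosine-law and half-space argument proves the boundary formula, which the paper only asserts from the gradient of the active constraint. The $\operatorname{ri}(\operatorname{dom}f)$ caveat you flag is real but also present in the paper, which applies Lemma~\ref{le:visp} at $p_+$ without checking $p_+\in\operatorname{ri}(\operatorname{dom}f)$, and your direct sufficiency argument shows that only the necessity direction depends on it.
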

\begin{proof}
Let $C:=\mathbb{B}(p,t):=\{\,x\in\mathcal{M}:\ d(p,x)-t \leq 0 \}$. First we note that, due to  $p\in \operatorname{dom}f$, applying Lemma~\ref{lem:ri-ball-intersect} we conclude  that    \(\operatorname{ri}(\operatorname{dom}f)\cap\operatorname{int}\mathbb{B}(p,t)\neq\varnothing\). Then, since $\delta_C$ is proper, lower semicontinuous, and geodesically convex with 
$\partial\delta_C(q)=N_C(q)$ for $q\in C$, by Lemma~\ref{le:visp} we have 
\begin{equation}\label{eq:opt-char}
{p_{+}} \in C\ \text{solves}\ \min_{q\in C} f(q)\quad\Longleftrightarrow\quad
{0}\in \partial\big(f+\delta_C\big)({p_{+}} )=\partial f({p_{+}} )+N_C({p_{+}} ).
\end{equation}
Therefore the KKT system reduces to computing the normal cone $N_C(q)$ of the closed metric ball. If $d(p,q)<t$, then $q$ is an interior point of $C$ and $N_C(q)=\{0\}$. Now, assume that  $d(p,q)=t$. On a Hadamard manifold, the distance function $x\mapsto d(p,x)$ is convex and smooth away from $p$, with
\[
{\rm gard}\ d(p,\cdot)(q)=(-{\log_q p})/{d(p,q)}, \qquad q\neq p.
\]
Since $d(p,q)=t>0$, we have ${\rm gard} g(q)={\rm gard} d(p,\cdot)(q)=-\log_q p/t$.  Therefore,  the normal cone is 
\[
N_C(q)=\big\{\ \mu\,{\rm gard} g(q):\ \mu\geq 0 \big\}=\big\{\ \mu\big((-{\log_q p})/{t}\big):\ \mu\geq 0\ \big\}
=\big\{\theta (-\log_q p):\ \theta \geq 0 \big\}.
\]
Summarizing, we conclude that 
\begin{equation}\label{eq:NCball}
N_{\mathbb{B}(p,t)}(q)=
\begin{cases}
\{\mathbf{0}\}, & d(p,q)<t,\\[0.4ex]
\{\theta(-\log_q p):\ \theta\ge 0\,\}, & d(p,q)=t,
\end{cases}
\qquad\text{and }N_C(q)=\varnothing\ \text{ if }q\notin C.
\end{equation}
Combining \eqref{eq:opt-char} with \eqref{eq:NCball} and the feasibility $d(p,{p_{+}} )\le t$ yields:
there exists $ \theta_t(p) \geq 0$ such that
\[
\mathbf{0}\in \partial f({p_{+}} )+ \theta_t(p) (-\log_{{p_{+}} }p),
\]
with $ \theta_t(p) =0$ when $d(p,{p_{+}} )<t$ and $ \theta_t(p) \ge0$ satisfying stationarity when $d(p,{p_{+}} )=t$. This is precisely the stationarity condition \eqref{eq:KKT-ball-stat}.
The complementary slackness condition in  \eqref{eq:KKT-ball-stat} encodes the dichotomy between the interior case ($d(p,{p_{+}} )<t\Rightarrow  \theta_t(p) =0$) and the boundary case ($d(p,{p_{+}} )=t$ with $ \theta_t(p) \ge0$).  Primal feasibility in \eqref{eq:KKT-ball-stat} is part of the statement ${p_{+}} \in C$.
Finally, the last statement follows immediately from the stationarity condition \eqref{eq:KKT-ball-stat}, which is equivalently written as the existence of a multiplier $ \theta_t(p) \ge 0$ such that
\(
 \theta_t(p) \log_{{p_{+}} }p \in \partial f({p_{+}} ).
\)
This completes the proof.
\end{proof}

\begin{remark}\label{re:KKT-ball-cases}
The KKT conditions in Theorem~\ref{thm:KKT-ball} cleanly split the optimality structure into two mutually exclusive cases:
\begin{itemize}
\item[(i)]   If $d(p,{p_{+}} )<t$, the ball constraint is inactive; hence complementary slackness forces $ \theta_t(p) =0$, and the stationarity condition reduces to $\mathbf{0}\in\partial f({p_{+}} )$.
\item[(ii)] If $d(p,{p_{+}} )=t$, the constraint is active and there exists a multiplier $ \theta_t(p) \ge 0$ such that the stationarity condition reads $\mathbf{0}\in\partial f({p_{+}} )+ \theta_t(p) (-\log_{{p_{+}} }p)$, i.e., there is $ \theta_t(p) \log_{{p_{+}} }p \in\partial f({p_{+}} )$.
\end{itemize}
In particular, interior solutions are unconstrained minimizers of $f$, whereas boundary solutions balance a subgradient of $f$ against the outward normal to the metric ball at ${p_{+}} $ (represented by $\log_{{p_{+}} }p$), as dictated by complementary slackness.
\end{remark}

%%%%%%%%%%%%%%%%%%%%%%%%%%%%%%%%%%%%
\subsection{Properties of  ball-proximal  map} \label{ses:PropBroxMap} 

This Section develops the fundamental structural properties of the ball-proximal   map.  
Using the KKT characterization derived in Section~\ref{ses:BroxSup}, we establish
existence, boundary location, and when the metric ball does not intersect the solution
set,  uniqueness of ball-proximal   points. These results clarify the geometric behaviour of
ball-proximal   steps and will serve as key ingredients for the Riemannian ball-proximal   point
method introduced later.

\begin{theorem}\label{thm:first-brox-Riem}
Let \(\mathcal{M}\) be a Hadamard manifold and let \(f:\mathcal{M}\to\mathbb{R}\cup\{+\infty\}\) be proper, lower semicontinuous, and geodesically convex. Fix \(p\in\operatorname{dom}f\) and \(t>0\). Then, 
\begin{enumerate}[label=\textup{(\roman*)}]
\item The ball-proximal   set
\(
\operatorname{brox}^{t}_{f}(p)
\)
is nonempty. Moreover, if \(\mathbb{B}(p,t)\cap {\Omega^*}\neq\varnothing\), then \(\operatorname{brox}^{t}_{f}(p)\subseteq {\Omega^*}\).
\item  If \(\mathbb{B}(p,t)\cap {\Omega^*}=\varnothing\), then \(\operatorname{brox}^{t}_{f}(p)=\{{p_{+}}\}\) is a singleton and  \({p_{+}}\in \mathbb{S}(p,t)\), i.e., \(d(p,{p_{+}})=t\).
\end{enumerate}
\end{theorem}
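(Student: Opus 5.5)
Part (i) splits into existence and inclusion, and part (ii) into boundary location and uniqueness. I would do them in that order, using convexity along geodesics together with Theorem~\ref{thm:KKT-ball}.

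\emph{Existence and inclusion.} By Hopf--Rinow, $\mathbb{B}(p,t)$ is compact. Since $f$ is lower semicontinuous and $f(p)<+\infty$ with $p\in\mathbb{B}(p,t)$, $f$ attains a finite minimum on the ball, so $\operatorname{brox}^{t}_{f}(p)\neq\varnothing$. Now suppose there is $x^*\in\mathbb{B}(p,t)\cap\Omega^*$. For any $p_+\in\operatorname{brox}^{t}_{f}(p)$ we have $f(p_+)\le f(x^*)=f^*\le f(p_+)$. Hence $f(p_+)=f^*$, i.e.\ $p_+\in\Omega^*$.

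\emph{Boundary location in (ii).} Assume $\mathbb{B}(p,t)\cap\Omega^*=\varnothing$ and let $p_+$ be a broximal point. Suppose, for contradiction, that $d(p,p_+)<t$. Then $p_+$ is a local minimizer of $f$ in $\mathcal M$, since it minimizes $f$ on a neighbourhood of itself. I would show it is a global minimizer. Take any $q\in\mathcal M$ and consider $\gamma_{p_+q}(s)$ for small $s>0$; this point stays in the ball because $d(p,\gamma_{p_+q}(s))\le d(p,p_+)+s\,d(p_+,q)<t$. Convexity then gives $f(p_+)\le f(\gamma_{p_+q}(s))\le (1-s)f(p_+)+s f(q)$, so $f(p_+)\le f(q)$. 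Thus $p_+\in\Omega^*\cap\mathbb{B}(p,t)$, a contradiction. Alternatively, Remark~\ref{re:KKT-ball-cases}(i) gives $0\in\partial f(p_+)$ directly, but the elementary argument avoids the constraint qualification at $p_+$. Either way, $d(p,p_+)=t$.

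\emph{Uniqueness.} This is the main step, since $f$ is only convex, not strictly convex. Suppose $p_1\neq p_2$ are both broximal points, with common value $m=f(p_1)=f(p_2)$. By convexity of the ball, which follows from convexity of $d(p,\cdot)$ on a Hadamard manifold, the midpoint $c=\gamma_{p_1p_2}(1/2)$ lies in $\mathbb{B}(p,t)$. By convexity of $f$, $f(c)\le m$, so $c$ is also a broximal point and hence, by the previous step, $d(p,c)=t$. To contradict this I would use strict convexity of $d^2(p,\cdot)$ along nonconstant geodesics. From Lemma~\ref{le:CosLawF}, $d^2(p,\cdot)$ is $2$-strongly convex on a Hadamard manifold, which gives
\[
d^2(p,c)\le \tfrac12 d^2(p,p_1)+\tfrac12 d^2(p,p_2)-\tfrac14 d^2(p_1,p_2)=t^2-\tfrac14 d^2(p_1,p_2)<t^2 .
\]
This contradicts $d(p,c)=t$, so the broximal point is unique.

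The one point needing care is this strong convexity of $d^2(p,\cdot)$. I would obtain it from Lemma~\ref{le:CosLawF} applied at the point $\gamma(s)$, in the standard way. Everything else is routine.
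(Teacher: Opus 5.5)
Your proof is correct and follows the paper's argument. Existence comes from compactness of the ball plus lower semicontinuity, the boundary location from local-implies-global minimality under convexity, and uniqueness from showing that an interior point of the geodesic joining two distinct broximal points is again broximal yet lies strictly inside the ball. The one place you go further is that you justify this strict interiority via the $2$-strong convexity of $d^2(p,\cdot)$ obtained from Lemma~\ref{le:CosLawF}, whereas the paper simply asserts it.
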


\begin{proof}
To prove (i), note that, by the Hopf–Rinow theorem, the closed metric ball \(\mathbb{B}(p,t)\) is compact. Since \(f\) is proper and lower semicontinuous, \(f\) attains its minimum on \(\mathbb{B}(p,t)\); hence \(\operatorname{brox}^{t}_{f}(p)\neq\varnothing\). If \(\mathbb{B}(p,t)\cap {\Omega^*}\neq\varnothing\), then the minimum value of \(f\) over \(\mathbb{B}(p,t)\) equals \(f_*\) as defined in~\eqref{eq:solset}, so every ball-proximal   point belongs to \({\Omega^*}\).

For proving  (ii), let \({p_{+}}\in \operatorname{brox}^{t}_{f}(p)\). We first show that $y$ must lie on the boundary
sphere $\mathbb{S}(p,t)$. Suppose, by contradiction, that $y\in \operatorname{int}\mathbb{B}(p,t)$. Then there exists $\varepsilon>0$ such that
$\mathbb{B}({p_{+}},\varepsilon)\subset\mathbb{B}(p,t)$. Since $y$ minimizes $f$ on $\mathbb{B}(p,t)$, it also minimizes $f$ on $\mathbb{B}({p_{+}},\varepsilon)$.  Hence $y$ is a local minimizer of $f$ on $\mathcal{M}$. Because $f$ is geodesically convex, any local minimizer is a global minimizer. Indeed, if there
exists $x\in\mathcal{M}$ with $f(x)<f({p_{+}})$, let $\gamma_{{p_{+}}x}:[0,1]\to\mathcal{M}$ be the
minimizing geodesic from $y$ to $x$. By geodesic convexity, we have 
\[
f(\gamma_{{p_{+}}x}(\lambda)) \leq  (1-\lambda)f({p_{+}})+\lambda f(x) < f({p_{+}}), 
\qquad\text{for all sufficiently small }\lambda>0,
\]
which contradicts the local minimality of $y$.
Thus $y$ is a global minimizer of $f$, which implies that  $y\in\Omega^*$ and, in particular,
${p_{+}}\in\mathbb{B}(p,t)\cap\Omega^*$, which contradicts the hypothesis
$\mathbb{B}(p,t)\cap\Omega^*=\varnothing$. Therefore every ball-proximal   point must satisfy
${p_{+}}\in\mathbb{S}(p,t)$ and the first statement of item (ii) is proved.

For uniqueness, suppose by contradiction that there exist ${p_{+}}\neq {{\hat p}^+}$ with
${p_{+}},{{\hat p}^+}\in\operatorname{brox}^{t}_{f}(p)\subset\mathbb{S}(p,t)$. Let
$\gamma:[0,1]\to\mathcal{M}$ be the minimizing geodesic from ${p_{+}}$ to ${{\hat p}^+}$. Since $\mathcal{M}$
is Hadamard, the ball $\mathbb{B}(p,t)$ is geodesically convex, so
$\gamma([0,1])\subset\mathbb{B}(p,t)$ and the points $\gamma(\lambda)$ with $\lambda\in(0,1)$
belong to $\operatorname{int}\mathbb{B}(p,t)$. Denote
\(
m := f({p_{+}})=f({{\hat p}^+}) = \min_{q\in\mathbb{B}(p,t)} f(q).
\)
By geodesic convexity of $f$, for any $\lambda\in(0,1)$,
\[
f(\gamma(\lambda))
\;\le\; (1-\lambda)f({p_{+}})+\lambda f({{\hat p}^+})
= m.
\]
Since $m$ is the minimum value of $f$ on $\mathbb{B}(p,t)$, we must have
$f(\gamma(\lambda))=m$ for all $\lambda\in(0,1)$, so every interior point of the segment
$\gamma([0,1])$ is also a minimizer of $f$ on $\mathbb{B}(p,t)$. In particular, pick
$\lambda\in(0,1)$ and set $z:=\gamma(\lambda)$. Then $z\in\operatorname{brox}^{t}_{f}(p)$ and
$z\in\operatorname{int}\mathbb{B}(p,t)$, which contradicts the boundary property established
above. Hence such ${p_{+}}\neq {{\hat p}^+}$ cannot exist, and $\operatorname{brox}^{t}_{f}(p)$ is a
singleton with its unique element on $\mathbb{S}(p,t)$.
\end{proof}

\begin{theorem}\label{thm:second-brox-Riem}
Let \(\mathcal{M}\) be a Hadamard manifold and let \(f:\mathcal{M}\to\mathbb{R}\cup\{+\infty\}\) be proper, lower semicontinuous, and geodesically convex. Fix \(p\in\operatorname{dom} f\), \(t>0\), and let \({p_{+}}\in \operatorname{brox}^{t}_{f}(p)\). Then there exists a scalar \(\theta_t(p)\ge 0\) such that
\begin{enumerate}[label=\textup{(\roman*)}]
\item \(\theta_t(p)\,\log_{p_{+}}p \in \partial f({p_{+}})\);
\item \(f(x) \ge f({p_{+}})+\theta_t(p) \big\langle \log_{p_{+}}p,\ \log_{p_{+}}x\big\rangle \), for all \(x\in\mathcal{M}\).
\end{enumerate}
Moreover, if \(\mathbb{B}(p,t)\cap {\Omega^*}\neq\varnothing\), then \(\theta_t(p)=0\); if \(\mathbb{B}(p,t)\cap {\Omega^*}=\varnothing\), then \(d(p,{p_{+}})=t\) and \(\theta_t(p)>0\).
\end{theorem}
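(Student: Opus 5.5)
The plan is to read items (i) and (ii) off the KKT characterization in Theorem~\ref{thm:KKT-ball}, and then split according to whether the ball meets $\Omega^*$, using Theorem~\ref{thm:first-brox-Riem}.

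\emph{Items (i) and (ii).} Since $p_{+}\in\operatorname{brox}^{t}_{f}(p)$ solves \eqref{eq:Bsubprof}, Theorem~\ref{thm:KKT-ball} gives a multiplier $\theta_t(p)\ge0$ with $\theta_t(p)\log_{p_{+}}p\in\partial f(p_{+})$, which is (i). For (ii), insert the subgradient $s=\theta_t(p)\log_{p_{+}}p$ into the definition of $\partial f(p_{+})$, which reads $f(x)\ge f(p_{+})+\langle s,\log_{p_{+}}x\rangle$ for all $x\in\mathcal M$. Here $p_{+}\in\operatorname{dom}f$, because $p\in\operatorname{dom}f$ lies in the ball and hence $f(p_{+})\le f(p)<\infty$.

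\emph{Case $\mathbb{B}(p,t)\cap\Omega^*\neq\varnothing$.} By Theorem~\ref{thm:first-brox-Riem}(i), $p_{+}\in\Omega^*$, so $\mathbf 0\in\partial f(p_{+})$. Since the multiplier is an existential choice, I would show that $\theta_t(p)=0$ is admissible. With $\theta_t(p)=0$, stationarity holds, complementary slackness $0\cdot(d(p,p_{+})-t)=0$ holds trivially, and (ii) reduces to $f(x)\ge f(p_{+})=f^*$. So I will choose $\theta_t(p):=0$ in this case. I will also note that if $p_{+}\neq p$ and $d(p,p_{+})<t$, then complementary slackness forces $\theta_t(p)=0$ anyway, which is consistent with this choice.

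\emph{Case $\mathbb{B}(p,t)\cap\Omega^*=\varnothing$.} Theorem~\ref{thm:first-brox-Riem}(ii) gives $d(p,p_{+})=t$. Suppose $\theta_t(p)=0$. Then (i) yields $\mathbf 0\in\partial f(p_{+})$, so $f(x)\ge f(p_{+})$ for all $x$, i.e.\ $p_{+}\in\Omega^*\cap\mathbb{B}(p,t)$, a contradiction. Hence $\theta_t(p)>0$. Also $\log_{p_{+}}p\neq0$ because $t>0$, so this multiplier is in fact uniquely determined once a subgradient is fixed.

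\emph{Main obstacle.} The only delicate point is the phrase ``there exists a scalar'' in the first case. The KKT multiplier from Theorem~\ref{thm:KKT-ball} need not be unique when $p_{+}$ lies on the sphere and is also a minimizer. I would handle this by making the explicit choice $\theta_t(p)=0$ there rather than claiming that every multiplier vanishes. Everything else is a direct consequence of the earlier results.
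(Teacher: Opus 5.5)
Your proposal is correct and follows the paper's own proof. The paper likewise splits on whether $\mathbb{B}(p,t)$ meets $\Omega^*$ and, in the first case, explicitly takes $\theta_t(p)=0$ using $p_{+}\in\Omega^*$. In the disjoint case it invokes Theorem~\ref{thm:KKT-ball} for (i), uses the subgradient inequality for (ii), and uses $p_{+}\notin\Omega^*$ to force $\theta_t(p)>0$. The only difference is cosmetic: you call the KKT theorem before the case split and then override the multiplier with $0$ in the first case, which is harmless.
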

\begin{proof}
We proceed by cases: either \(\mathbb{B}(p,t)\cap {\Omega^*}\neq\varnothing\) or \(\mathbb{B}(p,t)\cap {\Omega^*}=\varnothing\). First we assume that \(\mathbb{B}(p,t)\cap {\Omega^*}\neq\varnothing\). By Theorem~\ref{thm:first-brox-Riem}\,(i), we have \({p_{+}}\in {\Omega^*}\). Hence, by Proposition~\ref{pr:f-convex-subdiff}\,(i),
\({0}\in\partial f({p_{+}})\). Taking \(\theta_t(p)=0\) proves \textup{(i)}, and \textup{(ii)} follows from \(f(x)\ge f({p_{+}})\) for all \(x\in\mathcal{M}\).

Now, we assume that  \(\mathbb{B}(p,t)\cap {\Omega^*}=\varnothing\).
By Theorem~\ref{thm:first-brox-Riem}\,(ii), \({p_{+}}\in\mathbb{S}(p,t)\) and \({p_{+}}\) minimizes \(f\) over the geodesically convex set \(\mathbb{B}(p,t)\).  Therefore, it follows from Theorem~\ref{thm:KKT-ball} that there exists \(\theta_t(p)\geq 0\) such that 
\(
\theta_t(p) \log_{p_{+}}p  \in  \partial f({p_{+}}).
\)
 The subgradient inequality in Proposition~\ref{pr:f-convex-subdiff}  then gives, for all \(x\in\mathcal{M}\), 
\[
f(x)\geq  f({p_{+}})+\langle s,\log_{p_{+}}x\rangle=f({p_{+}})+\theta_t(p)\,\big\langle \log_{p_{+}}p,\ \log_{p_{+}}x\big\rangle,
\]
which is \textup{(ii)}. Finally, in the disjoint case \({p_{+}}\notin {\Omega^*}\), so \(\theta_t(p) \log_{p_{+}}p  \neq\mathbf{0}\), whence  \(\theta_t(p)>0\). Moreover \(d(p,{p_{+}})=t\) by Theorem~\ref{thm:first-brox-Riem}\,(ii).
\end{proof}

\begin{corollary}\label{co:ctc}
Suppose the assumptions of Theorem~\ref{thm:second-brox-Riem} hold.
Let $p\in\operatorname{dom} f$, fix $t>0$, and take
${p_{+}} \in\operatorname{brox}^{t}_{f}(p)$.
Then, 
\begin{enumerate}[label=\textup{(\roman*)}]
\item If $p^*\in\Omega^*$ with ${p_{+}}\neq p^*$ and  $p\notin\Omega^*$, then
\begin{equation}\label{eq:ct-lb-riem}
\theta_t(p)\geq \frac{f({p_{+}})-f^*}{\,d(p,{p_{+}})\,d({p_{+}},p^*)\,}.
\end{equation}
\item If $\mathbb{B}(p,t)\cap\Omega^*=\varnothing$ and $z\in\operatorname{brox}^{t}_{f}({p_{+}})$, then
\begin{equation}\label{eq:theta-chain-riem}
\theta_t({p_{+}})\,d({p_{+}},z)\leq \theta_t(p)\,d(p,{p_{+}}).
\end{equation}
\end{enumerate}
\end{corollary}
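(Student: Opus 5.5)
For item (i), I will plug $x=p^*$ into the subgradient inequality of Theorem~\ref{thm:second-brox-Riem}(ii), which gives
\[
f^*=f(p^*)\ \ge\ f(p_+)+\theta_t(p)\big\langle \log_{p_+}p,\ \log_{p_+}p^*\big\rangle .
\]
Rearranging and applying Cauchy--Schwarz together with $\|\log_{p_+}p\|=d(p,p_+)$ and $\|\log_{p_+}p^*\|=d(p_+,p^*)$ yields
\[
f(p_+)-f^*\ \le\ -\theta_t(p)\langle \log_{p_+}p,\log_{p_+}p^*\rangle\ \le\ \theta_t(p)\,d(p,p_+)\,d(p_+,p^*).
\]
Dividing is then legitimate provided $d(p,p_+)\,d(p_+,p^*)>0$. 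The factor $d(p_+,p^*)$ is positive by the hypothesis $p_+\neq p^*$. For the other factor I need $p_+\neq p$. Suppose instead $p_+=p$. Since $p\notin\Omega^*$, Remark~\ref{re:KKT-ball-cases}(i) shows that $p_+=p$ is an interior point of the ball, so it is an unconstrained minimizer, i.e.\ $p\in\Omega^*$, a contradiction. Hence both factors are positive and \eqref{eq:ct-lb-riem} follows.

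For item (ii), by Theorem~\ref{thm:second-brox-Riem} we have $d(p,p_+)=t$ and $\theta_t(p)>0$, and also
\[
s:=\theta_t(p)\log_{p_+}p\in\partial f(p_+),\qquad s':=\theta_t(p_+)\log_z p_+\in\partial f(z).
\]
If $\theta_t(p_+)=0$ or $z=p_+$, the left-hand side of \eqref{eq:theta-chain-riem} vanishes and there is nothing to prove. Otherwise I will use monotonicity of the subdifferential on Hadamard manifolds. Adding the two subgradient inequalities (Proposition~\ref{pr:f-convex-subdiff}) gives
\[
\langle s,\log_{p_+}z\rangle+\langle s',\log_z p_+\rangle\le 0.
\]
Writing $u:=\log_z p_+$, we have $\log_{p_+}z=-P_{zp_+}u$, so this becomes
\[
\theta_t(p_+)\,\|u\|^2\ \le\ \theta_t(p)\,\langle \log_{p_+}p,\ P_{zp_+}u\rangle\ \le\ \theta_t(p)\,d(p,p_+)\,\|u\|,
\]
where the last step is Cauchy--Schwarz together with the fact that parallel transport is an isometry. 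Dividing by $\|u\|=d(p_+,z)>0$ gives \eqref{eq:theta-chain-riem}.

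The main obstacle is making sure the parallel-transport identity $\log_{p_+}z=-P_{zp_+}\log_z p_+$ is used correctly, since this is precisely what lets the two subgradients be compared in a common tangent space. As a fallback, I can avoid transport altogether by bounding $\langle s,\log_{p_+}z\rangle\ge -\theta_t(p)\,d(p,p_+)\,d(p_+,z)$ directly by Cauchy--Schwarz in $T_{p_+}\mathcal M$, and using $\langle s',\log_z p_+\rangle=\theta_t(p_+)\,d(z,p_+)^2$. Both routes give the same estimate, so the transport issue is not a genuine obstruction.
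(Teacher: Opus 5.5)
Your proposal is correct and follows essentially the paper's proof. In item (i) you use the same subgradient inequality with $x=p^*$ plus Cauchy--Schwarz, justifying $d(p,p_{+})>0$ directly by ruling out $p_{+}=p$ where the paper splits on whether $\mathbb{B}(p,t)\cap\Omega^*$ is empty. In item (ii) you add the same two subgradient inequalities that the paper compares separately; the parallel-transport rewriting of $\log_{p_{+}}z$ is a harmless detour, and your fallback is exactly the paper's argument.
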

\begin{proof}
(i)  Apply Theorem~\ref{thm:second-brox-Riem}\,(ii) with $x=p^*$, we have 
\(
f(p^*)-f({p_{+}})\geq \theta_t(p)\,\big\langle \log_{p_{+}} p,\ \log_{p_{+}} p^*\big\rangle.
\)
Since $f(p^*)=f^*$ and ${p_{+}}\neq p^*$, rearranging and using Cauchy--Schwarz in $T_{p_{+}}\mathcal M$ yields
\[
f({p_{+}})-f^*\leq  \theta_t(p)\,\|\log_{p_{+}} p\|\,\|\log_{p_{+}} p^*\|
=\theta_t(p)\,d(p,{p_{+}})\,d({p_{+}},p^*).
\]
If $\mathbb{B}(p,t)\cap\Omega^*\neq\varnothing$, then $f({p_{+}})=f^*$ and \eqref{eq:ct-lb-riem} holds trivially.
Otherwise, by Theorem~\ref{thm:first-brox-Riem}\,(ii) we have $d(p,{p_{+}})=t>0$, so division is legitimate and
\eqref{eq:ct-lb-riem} follows.

(ii) If $\mathbb{B}({p_{+}},t)\cap{\Omega^*}\neq\varnothing$, then due to $z\in\operatorname{brox}^{t}_{f}({p_{+}})$, by Theorem~\ref{thm:first-brox-Riem}\,(i) we have 
$z\in{\Omega^*}$ and, by Theorem~\ref{thm:second-brox-Riem}, $\theta_t({p_{+}})=0$ and then  \eqref{eq:theta-chain-riem} holds trivially. 
Hence assume $\mathbb{B}({p_{+}},t)\cap{\Omega^*}=\varnothing$; then $z\neq {p_{+}}$ and, by Theorem~\ref{thm:first-brox-Riem}\,(ii), $z\in\mathbb{S}({p_{+}},t)$. Hence, applying  Theorem~\ref{thm:second-brox-Riem}\,(ii) to the pair $(p,{p_{+}})=({p_{+}},z)$ with $x={p_{+}}$, we conclude that 
\[
f({p_{+}})-f(z)\geq \theta_t({p_{+}})\,\big\langle \log_z {p_{+}},\ \log_z {p_{+}}\big\rangle
=\theta_t({p_{+}})\,d({p_{+}},z)^2.
\]
On the other hand, applying  Theorem~\ref{thm:second-brox-Riem}\,(ii) to the pair $(p,{p_{+}})=(p,{p_{+}})$ with $x=z$, we obtain that 
\(
f(z)-f({p_{+}})\geq \theta_t(p)\langle \log_{p_{+}} p,\ \log_{p_{+}} z \rangle.
\)
Hence, using Cauchy--Schwarz in $T_{p_{+}}\mathcal M$ we have 
\[
f({p_{+}})-f(z)\leq  -\,\theta_t(p)\,\big\langle \log_{p_{+}} p,\ \log_{p_{+}} z\big\rangle
\leq  \theta_t(p)\,\|\log_{p_{+}} p\|\,\|\log_{p_{+}} z\|
=\theta_t(p)\,d(p,{p_{+}})\,d({p_{+}},z).
\]
Comparing the two above bounds  and dividing by $d({p_{+}},z)>0$ yields  \eqref{eq:theta-chain-riem}, concluding the proof.
\end{proof}

%%%%%%%%%%%%%%%%%%%%%%%%%%%%%%%
\section{Riemannian ball-proximal  point method} \label{Sec:RB-PPM}

In this section, we introduce the Riemannian version of the ball–proximal  point method  for solving problem~\eqref{eq:problem}. We define the ball-proximal  map on metric balls, present the algorithmic scheme, and establish its basic well-posedness and first-order properties, which form the foundation for the convergence and complexity analyses developed in the next sections.

In the following we present the conceptual {\it Riemannian  version of the ball-proximal  point method  (RB-PPM)} for solving   the  unconstained  optimization problem~\eqref{eq:problem}.

\begin{algorithm}[H]
\begin{footnotesize}
\begin{description}
\item[Step 0.]
Take a sequence of radii \( \{t_k\}_{k\in\mathbb{N}}\subset(0,+\infty) \) and an initial point \( p_0\in {\rm dom}f \). Set \( k\gets 0 \).

\item[Step 1.]
Given the current iterate \(p_k\in\mathcal{M}\), compute \(p_{k+1}\in\mathcal{M}\) such that
\begin{equation} \label{eq:bpmsub}
p_{k+1}\in \operatorname{brox}^{t_k}_{f}(p_{k}) :=\arg\min\{\,f(q):~ q\in \mathbb{B}(p_k,t_k)\,\}.
\end{equation}

\item[Step 2.]
If \(p_{k+1}=p_k\), then \textbf{stop} and return \(p_k\); otherwise, set \(k\gets k+1\) and go to \textbf{Step~1}.
\end{description}
\caption{Riemannian ball-proximal  point method (RB-PPM)}
\label{Alg:RB-PPM-exact}
\end{footnotesize}
\end{algorithm}
Each iteration minimizes \(f\) over the metric ball \(\mathbb{B}(p_k,t_k)\), producing \(p_{k+1}\). The {\it sequence of radii}  \( \{t_k\}_{k\in\mathbb{N}}\subset(0,+\infty) \) will be fixed later in the convergence analysis; for now we only require \(t_k>0\). If \(p_{k+1}\in \operatorname{int}\mathbb{B}(p_k,t_k)\), then \( 0\in\partial f(p_{k+1})\); otherwise \(p_{k+1}\in \mathbb{S}(p_k,t_k)\) and satisfies the boundary first-order (KKT) condition
\begin{equation*}
0\ \in\ \partial f({p_{+}})\ +\ \lambda\,\grad \mathrm{d}_p({p_{+}})\qquad\text{for some }\lambda\ge 0,
\end{equation*}
where \(\mathrm{d}_p(q):=d(p,q)\). By Theorem~\ref{thm:first-brox-Riem}\,(ii), when \(\mathbb{B}(p_k,t_k)\cap {\Omega^*}=\varnothing\) the ball-proximal   point is unique and lies on the boundary, so the step length equals the radius,
\(
d(p_{k+1},p_k)=t_k,
\)
and the method keeps advancing until the ball captures a global minimizer. The stopping rule \(p_{k+1}=p_k\) means that \(p_k\) is a fixed point of the ball-proximal   map, i.e., \(p_{k}\in \operatorname{brox}^{t_k}_{f}(p_{k})\). Therefore, from Theorem~\ref{thm:first-brox-Riem},  for any initial point $p_0\in\operatorname{dom}f$ and any choice of sequence of radii  $\{t_k\}_{k\in\mathbb N}\subset(0,+\infty)$, the ball-proximal   subproblem \eqref{eq:bpmsub} at each iteration admits a solution and hence the update $p_{k+1}\in \operatorname{dom} f$ is well defined; we shall denote the resulting {\it RB-PPM sequence} by $\{p_k\}_{k\in\mathbb N}$. In the following we state and prove some properties of the RB-PPM sequence. 

First note  that by Theorem~\ref{thm:second-brox-Riem}, for each  sequence of radii \( \{t_k\}_{k\in\mathbb{N}}\subset(0,+\infty) \) there exists a sequence  \( \{\theta_{t_k}(p_k)\}_{k\in\mathbb{N}}\subset[0,+\infty) \)  such that 
\begin{equation*} 
f(x)\geq f(p_{k+1})+ \theta_t(p_k)\,\big\langle \log_{p_{k+1}}p_k,\ \log_{p_{k+1}}x\big\rangle, \qquad \forall x\in\mathcal{M},    \quad  \forall k\in\mathbb{N}.
\end{equation*} 
Using these dual scalars, we now establish a basic monotonicity property that will be instrumental later, namely,  the “scaled multipliers” \(t_k\,\theta_{t_k}(p_k)\) cannot increase along RB-PPM and hence \(\theta_{t_k}(p_k)\) itself is nonincreasing when the radius is fixed.
\begin{lemma} \label{le:sqthek}
The nonnegative sequence \( \{t_k\theta_{t_k}(p_k)\}_{k\in\mathbb{N}} \) is nonincreasing. As  a consequence, if \(t_k\equiv t>0\) is constant, then \( \{\theta_{t_k}(p_k)\}_{k\in\mathbb{N}} \) is nonincreasing.
\end{lemma}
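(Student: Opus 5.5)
The plan is to prove the one-step inequality
\[
t_{k+1}\,\theta_{t_{k+1}}(p_{k+1})\le t_k\,\theta_{t_k}(p_k)\qquad\text{for every }k,
\]
by repeating the argument of Corollary~\ref{co:ctc}\,(ii), now allowing two different radii $t_k$ and $t_{k+1}$. Nonnegativity of the sequence is immediate, since every $\theta_{t_k}(p_k)\ge 0$. The statement for a constant radius $t_k\equiv t$ then follows by dividing through by $t>0$. If the method stops at some iteration, the sequence is finite and nothing further is needed.

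First I dispose of the trivial case. If $\mathbb{B}(p_{k+1},t_{k+1})\cap\Omega^*\neq\varnothing$, then Theorem~\ref{thm:second-brox-Riem} gives $\theta_{t_{k+1}}(p_{k+1})=0$, and the inequality holds because the right-hand side is nonnegative. Otherwise, Theorem~\ref{thm:first-brox-Riem}\,(ii) and Theorem~\ref{thm:second-brox-Riem} give
\[
d(p_{k+1},p_{k+2})=t_{k+1}>0 \qquad\text{and}\qquad \theta_{t_{k+1}}(p_{k+1})>0.
\]

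In this nontrivial case I combine two subgradient inequalities, both taken from Theorem~\ref{thm:second-brox-Riem}\,(ii).
\begin{itemize}
\item Apply it at the pair $(p_{k+1},p_{k+2})$ with radius $t_{k+1}$ and test point $x=p_{k+1}$. This gives
\[
f(p_{k+1})-f(p_{k+2})\ge \theta_{t_{k+1}}(p_{k+1})\,d^2(p_{k+1},p_{k+2}).
\]
\item Apply it at the pair $(p_k,p_{k+1})$ with radius $t_k$ and test point $x=p_{k+2}$. Together with Cauchy--Schwarz in $T_{p_{k+1}}\mathcal M$, this gives
\[
f(p_{k+1})-f(p_{k+2})\le \theta_{t_k}(p_k)\,d(p_k,p_{k+1})\,d(p_{k+1},p_{k+2}).
\]
\end{itemize}
Chaining the two bounds and dividing by $d(p_{k+1},p_{k+2})=t_{k+1}>0$ yields
\[
\theta_{t_{k+1}}(p_{k+1})\,t_{k+1}\le \theta_{t_k}(p_k)\,d(p_k,p_{k+1}).
\]
Finally, feasibility gives $d(p_k,p_{k+1})\le t_k$, and since $\theta_{t_k}(p_k)\ge0$, the right-hand side is at most $t_k\theta_{t_k}(p_k)$. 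This is exactly the one-step inequality.

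The only delicate point is the replacement of $d(p_{k+1},p_{k+2})$ by $t_{k+1}$ on the left-hand side. This requires the next step to be a boundary step, which is why the case split on whether $\mathbb{B}(p_{k+1},t_{k+1})$ meets $\Omega^*$ must come first. On the right-hand side only the inequality $d(p_k,p_{k+1})\le t_k$ is used, so it does not matter whether the step from $p_k$ lands on the boundary.
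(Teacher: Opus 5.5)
Your proposal is correct and follows essentially the paper's route: the paper invokes Corollary~\ref{co:ctc}\,(ii) with $p=p_k$, $p_+=p_{k+1}$, $z=p_{k+2}$, and that corollary's proof is exactly your pair of subgradient inequalities chained via Cauchy--Schwarz. Your version is slightly more careful, because you redo the computation with two different radii and split cases on whether $\mathbb{B}(p_{k+1},t_{k+1})$ meets $\Omega^*$, whereas the corollary as stated assumes a common radius $t$.
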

\begin{proof}
If the algorithm reaches \( \Omega^* \) at some index \(\hat k\), then by Theorem~\ref{thm:second-brox-Riem} we have \(\theta_{t_{\hat k}}(p_{\hat k})=0\), and by the optimality of subsequent iterates the sequence \(\{\theta_{t_k}(p_k)\}_{k\geq \hat k}\) remains identically zero; thus the claim is trivial from that index on. Otherwise, consider two consecutive boundary steps \(k\) and \(k+1\), so that
\(p_{k+1}\in\operatorname{brox}^{t_k}_{f}(p_k)\) and
\(p_{k+2}\in\operatorname{brox}^{t_{k+1}}_{f}(p_{k+1})\).
Applying Corollary~\ref{co:ctc}\,(ii) to the points \(p=p_k, p_{+}=p_{k+1}\) and \(z=p_{k+2}\) with  radius \(t_k=d(p, p_{+})\) and  \(t_{k+1}=d(p_{+}, z)\),   yields
\(
\theta_{t_{k+1}}(p_{k+1}) t_{k+1}\leq \theta_{t_k}(p_k)t_k.
\)
Therefore,  \( \{t_k\theta_{t_k}(p_k)\}_{k\in\mathbb{N}} \) is nonincreasing, as stated and the first  statement  is proved. The proof of the second statement is an immediate  consequence of the first one. 
\end{proof}

Next we present  a version of the classical inequality for the proximal method   adapted to the ball-proximal   point method in the  Hadamard manifold, see \cite[Lemma1.1]{CorreaLemarechal1993} and see also \cite[Lema 6.2]{FerreiraOliveira2002}. 

\begin{lemma} \label{le:coslaw}
Assume that   $\mathbb B(p_k,t_k)\cap {\Omega^*}= \varnothing$. Then,  $p_{k+1}\notin {\Omega^*}$, \(d(p_{k+1},p_k)=t_k\) and \(\theta_{t_k}(p_k)>0\). In addition, there holds
\begin{equation} \label{eq:coslaw}
d^2(p_{k+1},{p}) \leq d^2(p_k,{p}) - d^2(p_k,p_{k+1})+ \frac{2}{\theta_{t_k}(p_k)}(f({p})-f(p_{k+1})),  \qquad \forall p\in\mathcal{M}.
\end{equation} 
\end{lemma}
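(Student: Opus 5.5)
The first three assertions are immediate consequences of earlier results, so we dispose of them first. Since $\mathbb B(p_k,t_k)\cap\Omega^*=\varnothing$ and $p_{k+1}\in\mathbb B(p_k,t_k)$, we get $p_{k+1}\notin\Omega^*$. Theorem~\ref{thm:first-brox-Riem}\,(ii) gives $d(p_{k+1},p_k)=t_k$. Theorem~\ref{thm:second-brox-Riem} gives $\theta_{t_k}(p_k)>0$, because the disjoint case forces a strictly positive multiplier. In particular, division by $\theta_{t_k}(p_k)$ in \eqref{eq:coslaw} is legitimate.

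For the inequality \eqref{eq:coslaw}, we follow the classical proximal argument. Fix $p\in\mathcal M$ and apply Lemma~\ref{le:CosLawF} with $y=p_{k+1}$, $x=p_k$, $z=p$:
\[
d^2(p_{k+1},p_k)+d^2(p_{k+1},p)-2\big\langle \log_{p_{k+1}}p_k,\log_{p_{k+1}}p\big\rangle\le d^2(p_k,p).
\]
Rearranging gives
\[
d^2(p_{k+1},p)\le d^2(p_k,p)-d^2(p_k,p_{k+1})+2\big\langle \log_{p_{k+1}}p_k,\log_{p_{k+1}}p\big\rangle .
\]

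Next we bound the inner product using Theorem~\ref{thm:second-brox-Riem}\,(ii), applied with $x=p$ at the pair $(p_k,p_{k+1})$:
\[
\theta_{t_k}(p_k)\big\langle \log_{p_{k+1}}p_k,\log_{p_{k+1}}p\big\rangle\le f(p)-f(p_{k+1}).
\]
Dividing by $\theta_{t_k}(p_k)>0$ and substituting into the previous display yields \eqref{eq:coslaw}.

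If $f(p)=+\infty$, the right-hand side is $+\infty$ and the inequality is trivial. Otherwise all quantities are finite, since $p_{k+1}\in\operatorname{dom}f$.

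The only delicate point is ensuring $\theta_{t_k}(p_k)>0$ so that the division is valid. This is exactly the content of the disjoint case of Theorem~\ref{thm:second-brox-Riem}, so no real obstacle remains.
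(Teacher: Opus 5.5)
Your proof is correct and follows the paper's argument: you combine the subgradient inequality of Theorem~\ref{thm:second-brox-Riem}(ii) with $x=p$ and the comparison inequality of Lemma~\ref{le:CosLawF} at $(x,y,z)=(p_k,p_{k+1},p)$, then divide by $\theta_{t_k}(p_k)>0$. Two small details are not spelled out in the paper: you read off $p_{k+1}\notin\Omega^*$ directly from $p_{k+1}\in\mathbb B(p_k,t_k)$, and you treat the case $f(p)=+\infty$ explicitly.
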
 
\begin{proof}
We first apply  Theorem~\ref{thm:second-brox-Riem}(ii) with \(t=t_k\),  \(p=p_k\), \(p_{+}=p_{k+1}\) and \(x={p} \in \mathcal{M}\), to have
\begin{equation} \label{eq:ficp}
f({p})-f(p_{k+1})\geq \theta_{t_k}(p_k) \langle \log_{p_{k+1}}p_k,\ \log_{p_{k+1}}{p}\rangle.
\end{equation} 
In addition, because  $\mathbb B(p_k,t_k)\cap {\Omega^*}= \varnothing$, then Theorem~\ref{thm:second-brox-Riem}  implies that $p_{k+1}\notin {\Omega^*}$ and \(\theta_{t_k}(p_k)>0\). In addition,  we can apply Lemma~\ref{le:CosLawF} with \(x=p_k\), \(y=p_{k+1}\) and  \(z=p\) to obtain that 
\begin{equation}\label{eq:hinge}
d^2(p_k,{p})\geq d^2(p_k,p_{k+1}) + d^2(p_{k+1},{p}) - 2\big\langle \log_{p_{k+1}}p_k,\ \log_{p_{k+1}}{p}\big\rangle.
\end{equation}
Thus, combining \eqref{eq:ficp} with \eqref{eq:hinge} and taking into account that \(\theta_{t_k}(p_k)>0\) we obtain \eqref{eq:coslaw}.
\end{proof}

\begin{corollary} \label{eq:monfv}
The sequence \(\{f(p_k)\}_{k\in\mathbb N}\) is monotone nonincreasing; that is,
\(f(p_{k+1})\le f(p_k)\) for all \(k\in\mathbb N\).
\end{corollary}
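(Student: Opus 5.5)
The plan is to use nothing beyond the feasibility of the current iterate in its own ball. By Step~1 of Algorithm~\ref{Alg:RB-PPM-exact}, $p_{k+1}\in\operatorname{brox}^{t_k}_{f}(p_k)$, so $p_{k+1}$ minimizes $f$ over $\mathbb{B}(p_k,t_k)$. Since $d(p_k,p_k)=0\le t_k$, the point $p_k$ itself lies in $\mathbb{B}(p_k,t_k)$ and is therefore a feasible competitor in \eqref{eq:bpmsub}. Minimality of $p_{k+1}$ then gives $f(p_{k+1})\le f(p_k)$ directly.

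Two bookkeeping points should be recorded so that the statement holds for every $k\in\mathbb N$. First, the values are finite: $p_0\in\operatorname{dom}f$, and inductively $f(p_{k+1})\le f(p_k)<+\infty$. Hence every iterate stays in $\operatorname{dom}f$, which is what makes the next step well defined, as already observed after the algorithm. Second, if the method stops at some index because $p_{k+1}=p_k$, the sequence can be regarded as constant from then on, so the inequality holds trivially.

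As an optional cross-check in the regime $\mathbb{B}(p_k,t_k)\cap\Omega^*=\varnothing$, I would also note that Lemma~\ref{le:coslaw} with $p=p_k$ yields $0\le -t_k^2+\frac{2}{\theta_{t_k}(p_k)}(f(p_k)-f(p_{k+1}))$. This gives the quantitative bound $f(p_k)-f(p_{k+1})\ge \tfrac{1}{2}\theta_{t_k}(p_k)t_k^2>0$, i.e. strict decrease in that regime.

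I do not expect any step to be a real obstacle. The only point that needs care is that $f$ may take the value $+\infty$, and the induction on $\operatorname{dom}f$ above handles it.
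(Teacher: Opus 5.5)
Your proof is correct, and it takes a more elementary route than the paper. Since $p_k$ is feasible for its own subproblem \eqref{eq:bpmsub}, the minimality of $p_{k+1}$ over $\mathbb{B}(p_k,t_k)$ gives $f(p_{k+1})\le f(p_k)$ at once. This uses no convexity, no Hadamard structure and no multiplier $\theta_{t_k}(p_k)$, and it holds for any function and any choice of minimizer. Your induction keeping the iterates in $\operatorname{dom}f$ is the right bookkeeping.

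The paper instead splits into two cases:
\begin{itemize}
\item If $\mathbb{B}(p_k,t_k)\cap\Omega^*\neq\varnothing$, it uses Theorem~\ref{thm:first-brox-Riem}(i) to get $p_{k+1}\in\Omega^*$, hence $f(p_{k+1})=f^*\le f(p_k)$.
\item Otherwise, it applies Lemma~\ref{le:coslaw} with $p=p_k$ and obtains the quantitative strict decrease $f(p_k)-f(p_{k+1})\ge\theta_{t_k}(p_k)\,t_k^2>0$.
\end{itemize}
The case split buys this quantitative bound in the disjoint regime, which is the kind of estimate driving Lemma~\ref{eq:ppicb} and Theorems~\ref{thm:subgrad-sum} and~\ref{thm:D3}. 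For the monotonicity claim alone, your one-line argument suffices and is cleaner.

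In your optional cross-check you lose a factor $2$. In Lemma~\ref{le:coslaw} with $p=p_k$, the left-hand side is $d^2(p_{k+1},p_k)=t_k^2$, not $0$. Keeping it gives $2t_k^2\le \frac{2}{\theta_{t_k}(p_k)}\bigl(f(p_k)-f(p_{k+1})\bigr)$, which is the paper's sharper $\theta_{t_k}(p_k)\,t_k^2$. The same bound also follows directly from Theorem~\ref{thm:second-brox-Riem}(ii) with $x=p_k$.
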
 
\begin{proof}
If \(\mathbb B(p_k,t_k)\cap\Omega^*\neq\varnothing\), then by Theorem~\ref{thm:first-brox-Riem}\,(i) we have \(p_{k+1}\in\Omega^*\), hence \(f(p_{k+1})=f^*\le f(p_k)\). If \(\mathbb B(p_k,t_k)\cap\Omega^*=\varnothing\), we can apply Lemma~\ref{le:coslaw} with \(p=p_k\)  to obtain 
\(
f(p_k)-f(p_{k+1})\geq  \theta_{t_k}(p_k)\,t_k^2> 0,
\)
which implies \(f(p_{k+1})<f(p_k)\). Therefore, in both cases \(f(p_{k+1})\le f(p_k)\) for all \(k\), proving that \(\{f(p_k)\}_{k\in\mathbb N}\) is monotone nonincreasing.
\end{proof}

We next recall a basic geometric fact that will be used throughout the paper,  if the ball $\mathbb{B}(p_k,t_k)$ meets $\Omega^*$, then the ball–proximal subproblem returns an optimal point $p_{k+1}\in\Omega^*$; otherwise, the squared distance to $\Omega^*$ decreases by at least $t_k^2$, yielding Fejér convergence of $\{p_k\}_{k\in\mathbb{N}}$ to   $\Omega^*$.

\begin{lemma}\label{le:RB-PPM-convex}
 The following statements hold:
\begin{itemize}
\item[\textup{(i)}]  If $\mathbb B(p_k,t_k)\cap {\Omega^*}\neq \varnothing$, then any ball-proximal   point is optimal, i.e.,  $p_{k+1}\in {\Omega^*}$. In particular, if $t_0\ge d(p_0,{\Omega^*})$, then  RB-PPM finds a minimizer of problem \eqref{eq:problem}   in one iteration.
\item[\textup{(ii)}]  If $\mathbb B(p_k,t_k)\cap {\Omega^*}= \varnothing$ and \({\Omega^*}\neq \varnothing\), then  for every ${p^*}\in {\Omega^*}$, there holds
\begin{equation}\label{eq:distance-drop}
d^2(p_{k+1},{p}_*)\leq d^2(p_k,{p}_*) - t_k^2.
\end{equation}
As a consequence,   
\begin{equation} \label{eq:ctkb}
\operatorname{dist}^2(p_{k+1},{\Omega^*})\leq  \operatorname{dist}^2(p_k,{\Omega^*}) - t_k^2. 
\end{equation} 
\end{itemize}
Consequently,  if ${\Omega^*}\neq \varnothing$ and $\mathbb B(p_k,t_k)\cap {\Omega^*}= \varnothing$ for all \(k\in {\mathbb N}\), then the sequence  $\{p_k\}_{k\in {\mathbb N}}$  is Fej\'er convergent   to ${\Omega^*}$ and the sequence $\{\operatorname{dist}(p_k,{\Omega^*})\}_{k\in {\mathbb N}}$ is nonincreasing.
\end{lemma}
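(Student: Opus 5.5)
Part (i) is essentially a restatement of Theorem~\ref{thm:first-brox-Riem}\,(i) applied with $p=p_k$ and $t=t_k$. If $\mathbb B(p_k,t_k)\cap\Omega^*\neq\varnothing$, then the minimum of $f$ over the ball equals $f^*$, so every ball-proximal point, in particular $p_{k+1}$, lies in $\Omega^*$. For the ``in particular'' clause we need $\Omega^*$ nonempty; I will read the hypothesis $t_0\ge \operatorname{dist}(p_0,\Omega^*)$ as presupposing this. Since $\Omega^*$ is closed (by lower semicontinuity) and convex, the projection ${\cal P}_{\Omega^*}(p_0)$ exists, and $d(p_0,{\cal P}_{\Omega^*}(p_0))=\operatorname{dist}(p_0,\Omega^*)\le t_0$. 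Hence the ball meets $\Omega^*$, and the first part gives $p_1\in\Omega^*$.

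For part (ii), I apply Lemma~\ref{le:coslaw} with $p=p^*\in\Omega^*$. This gives
\[
d^2(p_{k+1},p^*)\le d^2(p_k,p^*)-d^2(p_k,p_{k+1})+\tfrac{2}{\theta_{t_k}(p_k)}\big(f(p^*)-f(p_{k+1})\big).
\]
The same lemma gives $\theta_{t_k}(p_k)>0$ and $d(p_k,p_{k+1})=t_k$. Since $f(p^*)=f^*\le f(p_{k+1})$, the last term is nonpositive and can be dropped, which yields \eqref{eq:distance-drop}.

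To get \eqref{eq:ctkb}, I choose $p^*:={\cal P}_{\Omega^*}(p_k)$, which exists and is unique by Proposition~\ref{prop:proj-ineq}. Then
\[
\operatorname{dist}^2(p_{k+1},\Omega^*)\le d^2(p_{k+1},p^*)\le d^2(p_k,p^*)-t_k^2=\operatorname{dist}^2(p_k,\Omega^*)-t_k^2 .
\]
The one point to verify is that $\Omega^*$ is closed and convex, so that the projection is available. Closedness follows because $\Omega^*=\{f\le f^*\}$ is a sublevel set of an lsc function. Convexity follows because any point on a geodesic between two minimizers has value at most $f^*$, by geodesic convexity of $f$. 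Alternatively, one can avoid projections altogether: take the infimum over $p^*\in\Omega^*$ on the right-hand side of \eqref{eq:distance-drop} after bounding the left-hand side below by $\operatorname{dist}^2(p_{k+1},\Omega^*)$.

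For the final consequence, assume the ball misses $\Omega^*$ at every $k$. Then \eqref{eq:distance-drop} gives $d^2(p_{k+1},w)\le d^2(p_k,w)-t_k^2\le d^2(p_k,w)$ for all $w\in\Omega^*$ and all $k$, which is exactly Fejér convergence in the sense of Definition~\ref{def:QuasiFejer}. Likewise \eqref{eq:ctkb} shows that $\operatorname{dist}^2(p_k,\Omega^*)$ is nonincreasing, hence so is $\operatorname{dist}(p_k,\Omega^*)$. There is no real obstacle in this proof; the only care needed is the existence of the projection and the sign of $f(p^*)-f(p_{k+1})$.
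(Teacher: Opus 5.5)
Your proof is correct and follows the paper's argument. Part (i) comes from Theorem~\ref{thm:first-brox-Riem}(i). Part (ii) comes from Lemma~\ref{le:coslaw} with $p=p^*$, dropping the nonpositive term $\frac{2}{\theta_{t_k}(p_k)}(f(p^*)-f(p_{k+1}))$ and then minimizing over $p^*\in\Omega^*$; your choice $p^*={\cal P}_{\Omega^*}(p_k)$ is the same step with the infimum attained. Your observation that $\Omega^*$ is closed and convex also fills a small gap in the paper: it is what guarantees that $\operatorname{dist}(p_0,\Omega^*)\le t_0$ actually forces $\mathbb B(p_0,t_0)\cap\Omega^*\neq\varnothing$, which the paper asserts without justification.
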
 
\begin{proof}
Proof of item \textup{(i)}: The first statement follows by a direct application of Theorem~\ref{thm:first-brox-Riem}\textup{(i)}. For the second statement, observe that $t_0 \ge d(p_0,\Omega^*)$ implies $\mathbb{B}(p_0,t_0)\cap\Omega^*\neq\emptyset$. Thus, by the first statement, the first iterate $p_1$ belongs to $\Omega^*$ and is therefore a minimizer of \eqref{eq:problem}.

Proof of item \textup{(ii)}:  We first note that due   $\mathbb B(p_k,t_k)\cap {\Omega^*}= \varnothing$ we can apply Lemma~\ref{le:coslaw} with $p=p^*$ to obtain that 
\begin{equation} \label{eq:coslawap1}
d^2(p_{k+1},{p^*}) \leq d^2(p_k,{p^*}) - d^2(p_k,p_{k+1})+ \frac{2}{\theta_{t_k}(p_k)}(f({p^*})-f(p_{k+1})).
\end{equation} 
Since \(\theta_{t_k}(p_k)>0\) and   \(f({p}_*)=f_*\le f(p_{k+1})\),  \eqref{eq:coslawap1} implies that  \(d^2(p_{k+1},{p}_*) \leq d^2(p_k,{p}_*) - d^2(p_k,p_{k+1})\). Thus,  using also that  \(d(p_k,p_{k+1})=t_k\) yields the desired inequality \eqref{eq:distance-drop}. In addition,  taking into account that  \eqref{eq:distance-drop}  holds for every \({p}_*\in {\Omega^*}\) and \( \operatorname{dist}^2(p_{k+1},{\Omega^*}) \leq d^2(p_{k+1},{p}_*)\),  taking the best choice on the right hand side of \eqref{eq:distance-drop}  gives \eqref{eq:ctkb},  which  proves the second statement in item (ii).

We proceed to prove the last statement. For that, we assume that $\Omega^*\neq\varnothing$ and fix an arbitrary $p_\ast\in\Omega^*$. Since   $\mathbb B(p_k,t_k)\cap\Omega^*=\varnothing$, for every $p_\ast\in\Omega^*$  we have 
\[
d^2(p_{k+1},p_\ast)\leq  d^2(p_k,p_\ast)-t_k^2\leq  d^2(p_k,p_\ast),
\]
which implies that 
\(
d(p_{k+1},p_\ast)\le d(p_k,p_\ast)\),  for all \(k\in {\mathbb N}\). Therefore,  the sequence  $\{p_k\}_{k\in {\mathbb N}}$  is Fej\'er convergent   to ${\Omega^*}$. Finally, taking the infimum over $p_\ast\in\Omega^*$ gives
\(
\operatorname{dist}(p_{k+1},\Omega^*)\leq  \operatorname{dist}(p_k,\Omega^*), 
\)
for all \(k\in {\mathbb N}\).
Then,  $\{\operatorname{dist}(p_k,\Omega^*)\}_{k\in\mathbb N}$ is nonincreasing., and the proof is concluded. 
\end{proof}

The next lemma provides a quantitative estimate of the decrease in  function values between two consecutive iterates, relating it to the distance from the current iterate to the solution set.
\begin{lemma}\label{eq:ppicb}
For any choice of ${p}_*\in {\Omega^*}$, the following inequality holds
\begin{equation}\label{eq:qlinear1}
f(p_{k+1})-f^* \leq \frac{d(p_{k+1},p_*)}{d(p_{k+1},p_*)+t_k}\bigl(f(p_k)-f^*\bigr),
\qquad \forall k\in \mathbb{N}.
\end{equation}
As a consequence,
\begin{equation}\label{eq:qlinear2}
f(p_{k+1})-f^* \leq
\frac{\operatorname{dist}(p_{k+1},\Omega^*)}{\operatorname{dist}(p_{k+1},\Omega^*)+t_k}
\bigl(f(p_k)-f^*\bigr),
\qquad \forall k\in \mathbb{N}.
\end{equation}
\end{lemma}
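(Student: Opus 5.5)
Fix $p_*\in\Omega^*$ and $k\in\mathbb N$. If $\mathbb B(p_k,t_k)\cap\Omega^*\neq\varnothing$, then Theorem~\ref{thm:first-brox-Riem}(i) gives $p_{k+1}\in\Omega^*$. Hence $f(p_{k+1})-f^*=0$, and both \eqref{eq:qlinear1} and \eqref{eq:qlinear2} hold trivially because $f(p_k)\ge f^*$. So assume the ball misses $\Omega^*$. Then Lemma~\ref{le:coslaw} gives $p_{k+1}\notin\Omega^*$, $d(p_k,p_{k+1})=t_k$ and $\theta:=\theta_{t_k}(p_k)>0$. In particular $p_{k+1}\neq p_*$ and $p_k\notin\Omega^*$.

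\textbf{Two estimates on $\theta$.} Corollary~\ref{co:ctc}(i) gives the lower bound
\[
\theta\,t_k\,d(p_{k+1},p_*)\ge f(p_{k+1})-f^*.
\]
For the other direction, apply Theorem~\ref{thm:second-brox-Riem}(ii) with $x=p_k$:
\[
f(p_k)-f(p_{k+1})\ge \theta\langle\log_{p_{k+1}}p_k,\log_{p_{k+1}}p_k\rangle=\theta t_k^2 .
\]
This is the estimate already used in Corollary~\ref{eq:monfv}.

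\textbf{Combining them.} Multiply the first inequality by $t_k$ and divide by $d(p_{k+1},p_*)>0$:
\[
\theta t_k^2\ge \frac{t_k}{d(p_{k+1},p_*)}\bigl(f(p_{k+1})-f^*\bigr).
\]
Substituting into the second estimate gives
\[
f(p_k)-f^*-(f(p_{k+1})-f^*)\ge \frac{t_k}{d(p_{k+1},p_*)}\bigl(f(p_{k+1})-f^*\bigr).
\]
Rearranging,
\[
\Bigl(1+\frac{t_k}{d(p_{k+1},p_*)}\Bigr)\bigl(f(p_{k+1})-f^*\bigr)\le f(p_k)-f^*,
\]
which is exactly \eqref{eq:qlinear1}. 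There is no real obstacle here; the only points to check are that $d(p_{k+1},p_*)>0$ and $\theta>0$ in the nontrivial case, and both were established above.

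\textbf{Passing to \eqref{eq:qlinear2}.} The map $s\mapsto s/(s+t_k)$ is increasing on $[0,\infty)$. The subtle point is that $\operatorname{dist}(p_{k+1},\Omega^*)$ is an infimum over $\Omega^*$ and need not be attained. So take a sequence $p_*^j\in\Omega^*$ with $d(p_{k+1},p_*^j)\to\operatorname{dist}(p_{k+1},\Omega^*)$, write \eqref{eq:qlinear1} for each $p_*^j$, and pass to the limit; by continuity of $s\mapsto s/(s+t_k)$ this yields \eqref{eq:qlinear2}. In the trivial case both sides are handled as before. (If $\Omega^*=\varnothing$, the statement is vacuous, since it quantifies over $p_*\in\Omega^*$.)
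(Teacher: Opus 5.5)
Your proof is correct and follows the paper's argument essentially step for step. Both combine the two consequences of Theorem~\ref{thm:second-brox-Riem}(ii), namely $f(p_k)-f(p_{k+1})\ge\theta t_k^2$ and the Cauchy--Schwarz bound $f(p_{k+1})-f^*\le\theta\,t_k\,d(p_{k+1},p_*)$, and then rearrange; the only cosmetic differences are that you cite the second bound as Corollary~\ref{co:ctc}(i), split cases on $\mathbb{B}(p_k,t_k)\cap\Omega^*$ rather than on $p_{k+1}\in\Omega^*$, and pass to \eqref{eq:qlinear2} via a minimizing sequence instead of the paper's ``take the infimum over $p_*$'' (the infimum is in fact attained, since $\Omega^*$ is closed and metric balls are compact).
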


\begin{proof}
First observe that \eqref{eq:qlinear1} is immediate when $p_{k+1}\in\Omega^*$, since in this case $f(p_{k+1})=f^*$ and the right-hand side of \eqref{eq:qlinear1} is nonnegative. Now assume that $p_{k+1}\notin\Omega^*$. Then, by
Theorem~\ref{thm:first-brox-Riem}\textup{(i)–(ii)}, we have $\mathbb{B}(p_k,t_k)\cap\Omega^*=\varnothing$, whence $p_{k+1}\in\mathbb{S}(p_k,t_k)$, i.e., $d(p_k,p_{k+1})=t_k$, and $\theta_{t_k}(p_k)>0$. Applying Theorem~\ref{thm:second-brox-Riem} with $t=t_k$, $x=p_k$, $p=p_k$ and $p_+=p_{k+1}$ yields
\[
f(p_k)-f(p_{k+1})
 \geq \theta_{t_k}(p_k)\,\big\langle \log_{p_{k+1}}p_k,\ \log_{p_{k+1}}p_k\big\rangle
=\theta_{t_k}(p_k)\,d^2(p_{k+1},p_k)
=\theta_{t_k}(p_k)\,t_k^2,
\]
which implies that
\begin{equation}\label{eq:valdrop2}
f(p_{k+1})-f^* \leq f(p_k)-f^* - \theta_{t_k}(p_k)\,t_k^2 .
\end{equation}
On the other hand, applying Theorem~\ref{thm:second-brox-Riem} with
$t=t_k$, $x=p_*$, $p=p_k$ and $p_+=p_{k+1}$, where $p_*\in\Omega^*$, we obtain
\(
f(p_*)-f(p_{k+1}) \geq
\theta_{t_k}(p_k)\,\big\langle \log_{p_{k+1}}p_k,\ \log_{p_{k+1}}p_*\big\rangle.
\)
Since $f(p_{k+1})>f^*=f(p_*)$ in the present case, this is equivalent to
\[
0 < f(p_{k+1})-f^* \leq
-\theta_{t_k}(p_k)\,\big\langle \log_{p_{k+1}}p_k,\ \log_{p_{k+1}}p_*\big\rangle.
\]
Because $\theta_{t_k}(p_k)>0$, by the Cauchy–Schwarz inequality in
$T_{p_{k+1}}\mathcal{M}$ we deduce
\[
f(p_{k+1})-f^* \leq
\theta_{t_k}(p_k)\,\|\log_{p_{k+1}}p_k\|\,\|\log_{p_{k+1}}p_*\|
=
\theta_{t_k}(p_k)\,t_k\,d(p_{k+1},p_*).
\]
Since $d(p_{k+1},p_*)>0$, this inequality is equivalent to
\begin{equation}\label{eq:theta-lb}
\theta_{t_k}(p_k)\,t_k^2 \geq
\frac{(f(p_{k+1})-f^*)\,t_k}{d(p_{k+1},p_*)}>0.
\end{equation}
Substituting \eqref{eq:theta-lb} into \eqref{eq:valdrop2} yields
\[
f(p_{k+1})-f^* \leq f(p_k)-f^*
 - \frac{t_k}{d(p_{k+1},p_*)}\bigl(f(p_{k+1})-f^*\bigr),
\]
which is easily seen to be equivalent to the desired inequality
\eqref{eq:qlinear1}. Finally, since \eqref{eq:qlinear1} holds for all
$p_*\in\Omega^*$, taking the infimum over $p_*\in\Omega^*$ on the right-hand side
gives \eqref{eq:qlinear2}. The proof is complete.
\end{proof}

%%%%%%%%%%%%%%%%%%%%%%%%%%%%%%%%%
\subsection{Iteration complexity analysis} \label{sec:IntComplAnalysis}
In this section  we establish iteration–complexity results for the RB–PPM. We first derive a finite–termination criterion expressed in terms of the cumulative sum  of the squared radii. We then obtain quantitative subgradient estimates, in particular  a monotonicity property and a summation inequality, which lead to an explicit $\mathcal{O}(1/K)$ bound for constant radii. Finally, we prove value–decrease inequalities that, under a fixed radius, yield a geometric (linear) rate of convergence. We begin with the finite–termination result.

\begin{theorem}\label{thm:icfv}
Assume that $\Omega^*\neq\varnothing$. 
If $\sum_{j=0}^{K-1} t_j^2 \geq d^2(p_0,{\Omega^*})$, then $p_K\in {\Omega^*}$. In particular, if $t_k\equiv t>0$ is constant, then RB-PPM reaches ${\Omega^*}$ in at most
\[
K \leq \left \lceil \frac{d^2(p_0,{\Omega^*})}{t^2}\right\rceil
\]
iterations, where  $\lceil\cdot\rceil$ maps a real number to the smallest integer greater than or equal to it.
\end{theorem}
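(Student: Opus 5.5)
The plan is to argue by contradiction, using the distance-drop inequality \eqref{eq:ctkb} of Lemma~\ref{le:RB-PPM-convex}(ii) telescoped over the first $K$ iterations. Assume $\sum_{j=0}^{K-1}t_j^2\ge d^2(p_0,\Omega^*)$ but $p_K\notin\Omega^*$.

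The first step is to show that the balls met so far all missed $\Omega^*$. Suppose instead that $\mathbb B(p_k,t_k)\cap\Omega^*\neq\varnothing$ for some $k\le K-1$. Then Lemma~\ref{le:RB-PPM-convex}(i) gives $p_{k+1}\in\Omega^*$. From that index on the iterates stay in $\Omega^*$: if $p_j\in\Omega^*$, then $p_j\in\mathbb B(p_j,t_j)\cap\Omega^*$, so $p_{j+1}\in\Omega^*$ by Theorem~\ref{thm:first-brox-Riem}(i). Induction then gives $p_K\in\Omega^*$, a contradiction. (If the algorithm stops at Step~2 we read the sequence as constant, with $p_{j+1}=p_j$; the same argument applies.) Hence $\mathbb B(p_k,t_k)\cap\Omega^*=\varnothing$ for every $k=0,\dots,K-1$.

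The second step applies \eqref{eq:ctkb} at each $k=0,\dots,K-1$ and sums the resulting inequalities:
\[
\operatorname{dist}^2(p_K,\Omega^*)\le \operatorname{dist}^2(p_0,\Omega^*)-\sum_{j=0}^{K-1}t_j^2\le 0.
\]
So $\operatorname{dist}(p_K,\Omega^*)=0$. The set $\Omega^*$ is closed, being a sublevel set $\{f\le f^*\}$ of a lower semicontinuous function, and it is nonempty. Therefore $p_K\in\Omega^*$, a contradiction. I read $d^2(p_0,\Omega^*)$ in the statement as $\operatorname{dist}^2(p_0,\Omega^*)$.

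There is a slightly sharper strict route, which avoids even needing closedness. The ball $\mathbb B(p_{K-1},t_{K-1})$ misses the closed set $\Omega^*$, so $\operatorname{dist}(p_{K-1},\Omega^*)>t_{K-1}$. Telescoping only the first $K-1$ drops then gives
\[
\operatorname{dist}^2(p_0,\Omega^*)\ge \sum_{j=0}^{K-2}t_j^2+\operatorname{dist}^2(p_{K-1},\Omega^*)>\sum_{j=0}^{K-1}t_j^2,
\]
which contradicts the hypothesis directly. I would use this version.

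For constant radii $t_k\equiv t$, take $K=\lceil d^2(p_0,\Omega^*)/t^2\rceil$. Then $Kt^2\ge d^2(p_0,\Omega^*)$, so the first part gives $p_K\in\Omega^*$, and $\Omega^*$ is reached within that many iterations.

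The main obstacle is bookkeeping rather than any hard estimate: the strict inequality at the last step, and the invariance of $\Omega^*$ under the iteration, both have to be handled cleanly. The needed invariance fact follows from Theorem~\ref{thm:first-brox-Riem}(i).
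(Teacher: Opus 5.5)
Your proof is correct and follows the paper's argument: assume $p_K\notin\Omega^*$, deduce that every ball $\mathbb B(p_j,t_j)$ with $j\le K-1$ misses $\Omega^*$, and telescope \eqref{eq:ctkb} to a contradiction. Your explicit invariance step and your use of the closedness of $\Omega^*$ fill in details the paper leaves implicit. One small correction: your ``strict'' variant does not avoid closedness, because $\operatorname{dist}(p_{K-1},\Omega^*)>t_{K-1}$ needs $\Omega^*$ closed and the distance attained via compactness of closed balls; both facts do hold here.
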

\begin{proof}
Assume, by contradiction, that the point  \(p_K\notin\Omega^*\). Then, by Lemma~\ref{le:RB-PPM-convex}(i), we must have
\(\mathbb{B}(p_j,t_j)\cap\Omega^*=\varnothing\) for every \(j=0,1,\ldots,K-1\). Summing the decrease inequality
\eqref{eq:ctkb} in Lemma~\ref{le:RB-PPM-convex}(ii) over these indices yields
\[
\operatorname{dist}^2(p_K,\Omega^*) \leq \operatorname{dist}^2(p_0,\Omega^*)-\sum_{j=0}^{K-1} t_j^2 \leq 0.
\]
But \(p_K\notin\Omega^*\) implies \(\operatorname{dist}^2(p_K,\Omega^*)>0\). Thus considering that \(
\sum_{j=0}^{K-1} t_j^2 \geq \operatorname{dist}^2(p_0,\Omega^*)
\), we have  a contradiction. Hence \(p_K\in\Omega^*\). As a consequence, if \(t_k\equiv t>0\), each boundary step subtracts \(t^2\) from the right-hand side of
\eqref{eq:ctkb}, then  there can be at most
\(\big\lceil \operatorname{dist}^2(p_0,\Omega^*)/t^2\big\rceil\) boundary steps before some ball \(\mathbb{B}(p_k,t)\) intersects \(\Omega^*\), which   proves  the  desired inequality.
\end{proof} 

We next quantify the subgradients produced by RB-PPM, their norms are nonincreasing and their
weighted sum is bounded by the initial gap, which yields an explicit $\mathcal{O}(1/K)$ bound when the radius is constant.

\begin{theorem}\label{thm:subgrad-sum}
Let  \(\theta_{t}\) be  given by Theorem~\ref{thm:second-brox-Riem}.  Define the subgradient sequence as follows 
\begin{equation} \label{eq:dsgic}
s_{k+1}\ :=\ \theta_{t_k}(p_k)\,\log_{p_{k+1}}p_k\ \in\ \partial f(p_{k+1}), \qquad \forall k\in \mathbb{N}, 
\end{equation} 
Then, the following  hold:

\begin{enumerate}[label=\textup{(\roman*)}]
\item The sequence of norms \(\{\|s_{k}\|\}_{k\ge1}\) is nonincreasing. Moreover, if there exists $K\ge0$ such that
$p_{K+1}\in\operatorname{int}\mathbb B(p_{K},t_{K})$, equivalently, $0\in\partial f(p_{K+1})$ and $\theta_{t_{K}}(p_{K})=0$, 
then $s_j=\mathbf 0$ for every $j\ge K+1$.
\item  For every \(K\ge1\), there holds
\begin{equation}\label{eq:nonsmooth-sum}
\sum_{k=0}^{K-1} t_k\,\|s_{k+1}\|\leq  f(p_0)-f^*.
\end{equation}
In particular,  if \(t_k\equiv t>0\), then for all \(K\ge1\), we have 
\begin{equation}\label{eq:nonsmooth-last}
\|s_{K}\|\leq  \frac{f(p_0)-f^*}{t} \frac{1}{K}.
\end{equation}
\end{enumerate}
\end{theorem}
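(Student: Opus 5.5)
First note that the vectors \(s_{k+1}\) in \eqref{eq:dsgic} are subgradients by Theorem~\ref{thm:second-brox-Riem}\,(i). Their norms can be computed directly:
\[
\|s_{k+1}\|=\theta_{t_k}(p_k)\,\|\log_{p_{k+1}}p_k\|=\theta_{t_k}(p_k)\,d(p_k,p_{k+1}).
\]
Whenever \(\mathbb B(p_k,t_k)\cap\Omega^*=\varnothing\), Lemma~\ref{le:coslaw} gives \(d(p_k,p_{k+1})=t_k\), so \(\|s_{k+1}\|=t_k\theta_{t_k}(p_k)\). Otherwise Theorem~\ref{thm:second-brox-Riem} gives \(\theta_{t_k}(p_k)=0\), hence \(s_{k+1}=\mathbf 0\). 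In both cases
\[
\|s_{k+1}\|=t_k\,\theta_{t_k}(p_k)\quad\text{or}\quad \|s_{k+1}\|=0 .
\]
This reduces item (i) to the monotonicity of \(\{t_k\theta_{t_k}(p_k)\}\) from Lemma~\ref{le:sqthek}.

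For item (i), I will apply Corollary~\ref{co:ctc}\,(ii) with \(p=p_k\), \(p_+=p_{k+1}\) and \(z=p_{k+2}\). This yields \(\theta_{t_{k+1}}(p_{k+1})\,d(p_{k+1},p_{k+2})\le \theta_{t_k}(p_k)\,d(p_k,p_{k+1})\), which is exactly \(\|s_{k+2}\|\le\|s_{k+1}\|\) whenever \(\mathbb B(p_k,t_k)\cap\Omega^*=\varnothing\). The corollary is stated for a common radius \(t\). However, its proof only uses the two subgradient inequalities of Theorem~\ref{thm:second-brox-Riem}\,(ii), so it carries over verbatim to radii \(t_k\) and \(t_{k+1}\), as already done in Lemma~\ref{le:sqthek}.

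If instead the ball meets \(\Omega^*\), then \(p_{k+1}\in\Omega^*\) and \(s_{k+1}=\mathbf 0\). All later iterates stay in \(\Omega^*\): since \(p_{k+1}\in\mathbb B(p_{k+1},t_{k+1})\cap\Omega^*\), Theorem~\ref{thm:first-brox-Riem}\,(i) applies again. Hence all later \(\theta\)'s and \(s_j\)'s vanish. This also settles the second claim of (i): if \(p_{K+1}\) is interior, then \(0\in\partial f(p_{K+1})\), so \(p_{K+1}\in\Omega^*\), and the propagation just described gives \(s_j=\mathbf 0\) for all \(j\ge K+1\).

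For item (ii), I will use the value decrease established in the proof of Corollary~\ref{eq:monfv} and Lemma~\ref{eq:ppicb}. On boundary steps it reads \(f(p_k)-f(p_{k+1})\ge\theta_{t_k}(p_k)t_k^2=t_k\|s_{k+1}\|\). On steps where the ball meets \(\Omega^*\), the right side \(t_k\|s_{k+1}\|\) is zero, while the left side is nonnegative by Corollary~\ref{eq:monfv}. Summing over \(k=0,\dots,K-1\) telescopes to
\[
\sum_{k=0}^{K-1} t_k\|s_{k+1}\|\le f(p_0)-f(p_K)\le f(p_0)-f^*,
\]
which is \eqref{eq:nonsmooth-sum}. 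For constant \(t\), item (i) gives \(\|s_K\|\le\|s_{k+1}\|\) for every \(k\le K-1\), so \(tK\|s_K\|\le\sum_{k=0}^{K-1}t\|s_{k+1}\|\le f(p_0)-f^*\), which is \eqref{eq:nonsmooth-last}.

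The main obstacle is purely bookkeeping at the transition step. One must check that \(s_{k+1}\) is defined with the same multiplier that Theorem~\ref{thm:second-brox-Riem} assigns, in particular the choice \(\theta=0\) once \(\Omega^*\) is hit. One must also check that the monotonicity survives the step where the ball first meets \(\Omega^*\). There the norm drops to zero, so the inequality \(\|s_{k+2}\|\le\|s_{k+1}\|\) is trivial.
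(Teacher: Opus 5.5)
Your proposal is correct and follows essentially the same route as the paper. Item (i) comes from Corollary~\ref{co:ctc}\,(ii) applied to $(p_k,p_{k+1},p_{k+2})$ with varying radii, together with the vanishing of the multipliers once $\Omega^*$ is reached; item (ii) comes from the subgradient inequality with $x=p_k$, telescoping, and the monotonicity of $\|s_k\|$. Your treatment of the post-hitting case is slightly cleaner than the paper's: you note that the iterates stay in $\Omega^*$, so Theorem~\ref{thm:second-brox-Riem} forces $\theta=0$, whereas the paper invokes Corollary~\ref{co:ctc}\,(ii) even when its stated hypothesis $\mathbb{B}(p,t)\cap\Omega^*=\varnothing$ fails.
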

\begin{proof}
To prove item (i),  fix $k\ge0$. We prove that $\|s_{k+2}\|\le \|s_{k+1}\|$. Two cases are possible, \(\mathbb{B}(p_k,t_k)\cap\Omega^*\neq\varnothing\) or \( \mathbb{B}(p_k,t_k)\cap\Omega^*=\varnothing\). If $\mathbb{B}(p_k,t_k)\cap\Omega^*\neq\varnothing$, then by
Theorem~\ref{thm:first-brox-Riem}\,(i) any ball-proximal   point lies in $\Omega^*$. Hence,  we have  $p_{k+1}\in\Omega^*$, 
$0\in\partial f(p_{k+1})$ and $\theta_{t_k}(p_k)=0$. Therefore, 
\(
\|s_{k+1}\|=\theta_{t_k}(p_k)\,d(p_k,p_{k+1})=0.
\)
Thus, applying  Corollary~\ref{co:ctc}\,(ii)  with \(p=p_k\), \(p_{+}=p_{k+1}\) and \(z=p_{k+2}\) yields
\[
\theta_{t_{k+1}}(p_{k+1})\,t_{k+1}\leq  \theta_{t_k}(p_k)\,t_k= 0.
\]
Hence, we conclude that  $\theta_{t_{k+1}}(p_{k+1})=0$ and, in particular, $\|s_{k+2}\|=0=\|s_{k+1}\|$. Thus the claim holds in case \(\mathbb{B}(p_k,t_k)\cap\Omega^*\neq\varnothing\). Now, we  assume that \( \mathbb{B}(p_k,t_k)\cap\Omega^*=\varnothing\). 
In this case, the step is a boundary step and
\[
d(p_{k+1},p_k)=t_k\qquad\text{and}\qquad \|s_{k+1}\|=\theta_{t_k}(p_k)\,t_k.
\]
If the next step $k+1$ is also a boundary step, then $d(p_{k+2},p_{k+1})=t_{k+1}$ and $\|s_{k+2}\|=\theta_{t_{k+1}}(p_{k+1})\,t_{k+1}$. Applying  Corollary~\ref{co:ctc}\,(ii) with  \(p=p_k\), \(p_{+}=p_{k+1}\)  $t_k$ and \(z=p_{k+2}\)  gives
\(
\theta_{t_{k+1}}(p_{k+1})\,t_{k+1}\leq  \theta_{t_k}(p_k)\,t_k,
\)
that is, by definition \eqref{eq:dsgic} we have 
\[
\|s_{k+2}\|\leq  \|s_{k+1}\|.
\]
If instead the step $k+1$ is interior, then $\theta_{t_{k+1}}(p_{k+1})=0$ and
\(\|s_{k+2}\|=0\le \|s_{k+1}\|\). Hence,  the inequality again holds. Therefore, in both possibilities, we obtain $\|s_{k+2}\|\le \|s_{k+1}\|$ for every $k\ge0$, i.e.,
the sequence $\{\|s_k\|\}_{k\ge1}$ is nonincreasing. To prove the last statement in item (i), suppose that an interior step occurs at some index \(K\). Then \(p_{K+1}\in\Omega^*\) and \(\theta_{t_K}(p_K)=0\). Hence,  applying  Corollary~\ref{co:ctc}\,(ii)  we obtain that   
\(
\theta_{t_{K+1}}(p_{K+1})\,t_{K+1}\leq \theta_{t_K}(p_K)\,t_K=0, 
\)
which implies that \(\theta_{t_{K+1}}(p_{K+1})=0\). An induction argument then yields \(\theta_{t_j}(p_j)=0\) for all \(j\ge K+1\), which in turn gives \(s_{j+1}=\mathbf 0\) for all \(j\ge K+1\). This establishes item \textup{(i)}.

To prove item (ii), we first  apply  Theorem~\ref{thm:second-brox-Riem}\,(ii) with \((p,p_{+},x)=(p_k,p_{k+1},p_k)\) to obtain 
\[
f(p_k)-f(p_{k+1})
\ \ge\ 
\theta_{t_k}(p_k)\,\big\langle \log_{p_{k+1}}p_k,\ \log_{p_{k+1}}p_k\big\rangle
= \theta_{t_k}(p_k)\,d(p_k,p_{k+1})^2.
\]
By definitions of \(s_{k+1}\) and \(d\), we have
\(\|s_{k+1}\|=\theta_{t_k}(p_k)\,d(p_{k+1},p_k)\). Thus, it follows from the last inequality  that 
\[
t_k\,\|s_{k+1}\|= d(p_{k+1},p_k)\,\|s_{k+1}\|=\theta_{t_k}(p_k)\,d(p_{k+1},p_k)^2
\leq  f(p_k)-f(p_{k+1}).
\]
Since \(f(p_K)\ge f^*\),  summing for \(k=0,\ldots,K-1\) yields \eqref{eq:nonsmooth-sum}.  Now, if \(t_k\equiv t\), then dividing  \eqref{eq:nonsmooth-sum} by \(t\) and using  (i), we conclude that  \(\|s_{k+1}\|\ge \|s_{K}\|\) for all \(k\le K-1\). Thus, we have 
\[
K \|s_{K}\| \leq  \sum_{k=0}^{K-1}\|s_{k+1}\|=\frac{1}{t}\sum_{k=0}^{K-1} t\,\|s_{k+1}\|
\leq  \frac{f(p_0)-f^*}{t}.
\]
This is \eqref{eq:nonsmooth-last}, which concludes the proof. 
\end{proof}

We now state a geometric bound for the objective values along RB-PPM; for constant radii it yields a
linear rate up to the finite termination ensured by Theorem~\ref{thm:icfv}.

\begin{theorem}\label{th:linear}
For any \(K\ge 1\), we have 
\[
f(p_K)-f_*\leq  \Big[\prod_{k=0}^{K-1}\!\left(\frac{\operatorname{dist}(p_{0},\Omega^*)}{\operatorname{dist}(p_{0},\Omega^*)+{t_k}}\right)\Big]\bigl(f(p_0)-f_*\bigr).
\]
In particular, if \(t_k\equiv t>0\), then
\[
f(p_K)-f_* \leq \left(\frac{\operatorname{dist}(p_{0},\Omega^*)}{\operatorname{dist}(p_{0},\Omega^*)+{t}}\right)^{K}\!\bigl(f(p_0)-f_*\bigr),
\]
i.e., a geometric (linear–rate) decay holds up to the finite termination guaranteed by Theorem~\ref{thm:icfv}.
\end{theorem}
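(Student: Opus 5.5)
The plan is to iterate the one-step contraction \eqref{eq:qlinear2} of Lemma~\ref{eq:ppicb} and then replace $\operatorname{dist}(p_{k+1},\Omega^*)$ by the larger quantity $\operatorname{dist}(p_0,\Omega^*)$. This uses two facts. First, the map $a\mapsto a/(a+t_k)$ is nondecreasing on $[0,+\infty)$ for $t_k>0$. Second, the distances to $\Omega^*$ do not increase along the iterations. Throughout we assume $\Omega^*\neq\varnothing$, as is implicit in the statement; otherwise the right-hand side is not defined.

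The first step is to prove
\[
\operatorname{dist}(p_{k+1},\Omega^*)\le \operatorname{dist}(p_0,\Omega^*), \qquad 0\le k\le K-1.
\]
Let $\hat k$ be the first index with $\mathbb B(p_{\hat k},t_{\hat k})\cap\Omega^*\neq\varnothing$, if such an index exists.
\begin{itemize}
\item For every $k<\hat k$ the ball misses $\Omega^*$, so \eqref{eq:ctkb} in Lemma~\ref{le:RB-PPM-convex}(ii) gives $\operatorname{dist}(p_{k+1},\Omega^*)\le\operatorname{dist}(p_k,\Omega^*)$. By induction $\operatorname{dist}(p_{k+1},\Omega^*)\le \operatorname{dist}(p_0,\Omega^*)$.
\item At $k=\hat k$, Lemma~\ref{le:RB-PPM-convex}(i) gives $p_{\hat k+1}\in\Omega^*$, so its distance is $0$.
\item For later iterates we use that $f(p_{k+1})\le f(p_k)$ (Corollary~\ref{eq:monfv}), so $f(p_k)=f^*$ for all $k\ge \hat k+1$. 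Each such $p_k$ lies in $\Omega^*$, and distance $0$ is trivially bounded.
\end{itemize}
This gives the claim for all $k$.

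The second step uses the monotonicity of $a\mapsto a/(a+t_k)$, whose derivative is $t_k/(a+t_k)^2\ge 0$. Together with $f(p_k)-f^*\ge 0$, inequality \eqref{eq:qlinear2} upgrades to
\[
f(p_{k+1})-f^*\le \frac{\operatorname{dist}(p_0,\Omega^*)}{\operatorname{dist}(p_0,\Omega^*)+t_k}\,\bigl(f(p_k)-f^*\bigr), \qquad k\ge 0.
\]
One corner case needs a check: if $\operatorname{dist}(p_0,\Omega^*)=0$ with $p_0\in\Omega^*$, both sides are zero and the bound is trivial.

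The third step is a straightforward induction on $K$ that multiplies these factors, all of which lie in $[0,1)$, to get the product bound. Setting $t_k\equiv t$ turns the product into the stated power.

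The main obstacle is the first step. Lemma~\ref{le:RB-PPM-convex} controls the distance only while the balls avoid $\Omega^*$, so the case after the hitting time has to be closed by the argument above that the iterates stay in $\Omega^*$. Apart from that, the proof reduces to routine monotonicity bookkeeping.
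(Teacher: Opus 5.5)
Your proposal is correct and follows essentially the paper's route. You bound $\operatorname{dist}(p_{k+1},\Omega^*)$ by $\operatorname{dist}(p_0,\Omega^*)$ via Lemma~\ref{le:RB-PPM-convex}(ii), feed this into the one-step contraction of Lemma~\ref{eq:ppicb}, and iterate. The only differences are cosmetic: the paper disposes of the post-hitting case by noting directly that $f(p_K)=f^*$, whereas you show the iterates stay in $\Omega^*$ so the distance bound holds for every $k$, and you make explicit the monotonicity of $a\mapsto a/(a+t_k)$ that the paper uses tacitly.
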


\begin{proof}
If some \(j\in\{0,\ldots,K-1\}\) satisfies \(p_{j+1}\in\Omega^*\), then \(f(p_K)=f_*\) and the bound is trivial. 
Otherwise, for each \(k=0,1,\ldots,K-1\)  the point \(p_{j+1}\) belongs to  the boundary of the ball. Thus, Lemma~\ref{le:RB-PPM-convex}\textup{(ii)}  implies that  \(\operatorname{dist}(p_{k+1},\Omega^*)\le \operatorname{dist}(p_0,\Omega^*)\), and using Lemma~\ref{eq:ppicb} we conclude that
\[
f(p_{k+1})-f_*\leq  \left(\frac{\operatorname{dist}(p_{0},\Omega^*)}{\operatorname{dist}(p_{0},\Omega^*)+{t_k}}\right)( f(p_k)-f_*), \qquad k=0,1,\ldots,K-1.
\]
Iterating this recursion for \(k=0,\ldots,K-1\) yields the claimed product bound. 
For the constant–radius case \(t_k\equiv t>0\), the product reduces to 
\(
\big({\operatorname{dist}(p_{0},\Omega^*)}/({\operatorname{dist}(p_{0},\Omega^*)+{t}})\big)^{K}, 
\)
establishing geometric decay until the method hits \(\Omega^*\)
in finitely many steps by Theorem~\ref{thm:icfv}.
\end{proof}

Before presenting a refined $\mathcal{O}(1/K)$ estimate, we record a simple product-free corollary
of Theorem~\ref{th:linear}. Although the geometric bound in Theorem~\ref{th:linear} already allows
one to estimate suitable values of $K$ and $t$, the expression below provides a closed-form
approximation that can be inverted directly with respect to these parameters.

\begin{remark}
Let $t>0$ be fixed and set $d_0:=\operatorname{dist}(p_0,\Omega^*)$. Using $(1+x)^K\ge 1+Kx$ for
$x\ge0$ (hence $(1+x)^{-K}\le(1+Kx)^{-1}$), we obtain
\[
\prod_{k=0}^{K-1}\frac{d_0}{d_0+t}
=\Bigl(1+\tfrac{t}{d_0}\Bigr)^{-K}
\le \frac{1}{\,1+\tfrac{K t}{d_0}\,}.
\]
Therefore, by Theorem~\ref{th:linear}, we have 
\[
f(p_K)-f_*\;\le\;\frac{d_0}{d_0+Kt}\,\bigl(f(p_0)-f_*\bigr).
\]
We state this bound because it is a direct, closed-form consequence of the geometric estimate and is
monotone in $K$, which is convenient for selecting the number of iterations or the radius. Its order
is $\mathcal{O}(1/K)$ and, in general, the constant appearing in the next theorem is sharper.
\end{remark}

The next result provides a refined $\mathcal{O}(1/K)$ last-iterate bound for RB-PPM with constant
radii, featuring an explicit constant that is typically tighter than the crude ${d_0}/{(d_0+Kt)}$
estimate above.

\begin{theorem}\label{thm:D3}
For any \(K\ge 1\), the iterates of RB-PPM with constant radii \(t_k\equiv t>0\) satisfy
\begin{equation} \label{eq:iclc}
f(p_K)-f^* \leq \min \left\{\frac{\operatorname{dist}(p_{0},\Omega^*)}{t} , ~\frac{\operatorname{dist}(p_{0},\Omega^*)^3}{(2\operatorname{dist}(p_{0},\Omega^*)+t)t^2}\right\}(f(p_0)-f^*)\frac{1}{K}.
\end{equation} 
\end{theorem}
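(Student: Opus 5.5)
The plan is to prove the two bounds inside the minimum separately; since each one holds, their minimum does too. Throughout, write $d_0:=\operatorname{dist}(p_0,\Omega^*)$ and $a:=t/d_0$. If some iterate $p_j$ with $j\le K$ lies in $\Omega^*$, then $f(p_K)=f^*$ by Corollary~\ref{eq:monfv} and there is nothing to prove. So assume all steps $0,\dots,K-1$ are boundary steps. Then Lemma~\ref{le:RB-PPM-convex}(ii) applies at every step, and Theorem~\ref{thm:subgrad-sum} gives the subgradients $s_{k+1}=\theta_t(p_k)\log_{p_{k+1}}p_k\in\partial f(p_{k+1})$.

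\emph{First bound.} Fix $p_*\in\Omega^*$ to be the projection of $p_0$ onto $\Omega^*$. The subgradient inequality (Proposition~\ref{pr:f-convex-subdiff}) at $p_K$ with $x=p_*$, followed by Cauchy--Schwarz, gives $f(p_K)-f^*\le \|s_K\|\,d(p_K,p_*)$. Fejér monotonicity (Lemma~\ref{le:RB-PPM-convex}) gives $d(p_K,p_*)\le d(p_0,p_*)=d_0$. Combining this with \eqref{eq:nonsmooth-last}, $\|s_K\|\le (f(p_0)-f^*)/(tK)$, yields $f(p_K)-f^*\le \frac{d_0}{t}(f(p_0)-f^*)\frac1K$.

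\emph{Second bound.} Here the target constant is $\frac{d_0^3}{(2d_0+t)t^2}=\frac{1}{a^2(2+a)}$. By Theorem~\ref{th:linear}, it suffices to show
\[
K\,a^2(2+a)\le (1+a)^K
\]
whenever the method has not yet terminated. By Theorem~\ref{thm:icfv}, non-termination at step $K$ forces $K<d_0^2/t^2=1/a^2$, i.e.\ $Ka^2<1$. For $K\ge 1$ this also gives $a<1$. So the task reduces to an elementary inequality on the region $\{K\ge1,\ a>0,\ Ka^2<1\}$. There I would expand $(1+a)^K\ge 1+Ka+\binom K2 a^2+\binom K3 a^3$ and compare term by term with $2Ka^2+Ka^3$, using $Ka^2<1$ to absorb the leading term.

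If this crude inequality turns out to be false in some corner of that region, I would fall back on a sharper recursion and not use Theorem~\ref{th:linear} directly. Specifically, I would feed the quantitative distance drop $\operatorname{dist}^2(p_{k+1},\Omega^*)\le d_0^2-(k+1)t^2$ from \eqref{eq:ctkb} into the contraction factor of Lemma~\ref{eq:ppicb}. This gives
\[
f(p_K)-f^*\le\prod_{k=0}^{K-1}\frac{r_{k+1}}{r_{k+1}+t}\,\bigl(f(p_0)-f^*\bigr),\qquad r_j:=\sqrt{(d_0^2-jt^2)_+},
\]
which is much smaller near termination and should produce the factor $2d_0+t$ in the constant (a difference of squares, $d_0^2-(d_0-t)^2=t(2d_0-t)$, suggests itself). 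Another option is to combine the first-bound argument with $d(p_K,p_*)\le r_K$ instead of $d_0$.

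I expect the main obstacle to be this second bound: identifying exactly which inequality produces the constant $d_0^3/((2d_0+t)t^2)$. I would first test the case $K=1$, where the bound must follow from \eqref{eq:qlinear2} with $\operatorname{dist}(p_1,\Omega^*)^2\le d_0^2-t^2$, to calibrate which of the two routes above is intended.
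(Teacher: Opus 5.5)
Your first bound is correct, and it takes a slightly different path from the paper. You combine a single subgradient inequality at $p_K$ with the last-iterate estimate \eqref{eq:nonsmooth-last} of Theorem~\ref{thm:subgrad-sum}. The paper instead sums $f(p_{k+1})-f^*\le \frac{d_0}{t}\bigl(f(p_k)-f(p_{k+1})\bigr)$ over $k$ and uses monotonicity of $f(p_k)$. Both are fine.

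\textbf{The gap: your main route to the second bound fails at $K=1$.} That route needs $Ka^2(2+a)\le(1+a)^K$ on $\{K\ge1,\ Ka^2<1\}$. At $K=1$ this reads $a^2(2+a)\le 1+a$, which is false for every $a\in(a_0,1)$, where $a_0\approx 0.802$ is the positive root of $a^3+2a^2-a-1$. Concretely, set $\Delta_0:=f(p_0)-f^*$ and take $t=0.9\,d_0$. Theorem~\ref{th:linear} then only gives $f(p_1)-f^*\le (1.9)^{-1}\Delta_0\approx 0.526\,\Delta_0$, whereas the claim requires $\frac{1}{0.81\cdot 2.9}\Delta_0\approx 0.426\,\Delta_0$. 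At $K=1$ the constraint $Ka^2<1$ only removes $a\ge 1$, so it does not exclude this region. The frozen contraction factor $d_0/(d_0+t)$ is therefore intrinsically too weak. Your fallback is not an optional refinement; it is necessary, and you have not carried it out. Your guess that $2d_0+t$ comes from a difference of squares is also not what produces the constant.

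\textbf{How to close it with your fallback.} Since $r_{k+1}\le d_0$, your sharper product always dominates Theorem~\ref{th:linear}, so a case split suffices.
\begin{itemize}
\item For $K\ge2$, your elementary inequality does hold on $Ka^2<1$. Check $K=2,3$ directly. For $K\ge4$, use $2Ka^2<2\sqrt{K}\,a\le Ka$ and $Ka^3<a<1$, so $2Ka^2+Ka^3<1+Ka\le(1+a)^K$.
\item For $K=1$, the fallback gives $f(p_1)-f^*\le \frac{\sqrt{1-a^2}}{\sqrt{1-a^2}+a}\,\Delta_0$. This factor is at most $\frac{1}{a^2(2+a)}$ on $(0,1)$. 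Equivalently, $(1+a)^{3/2}(1-a)^{1/2}(a^2+a-1)\le a$: the left side is nonpositive for $a\le(\sqrt5-1)/2$ and stays below about $0.59<a$ otherwise.
\end{itemize}

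\textbf{How the paper gets the constant.} The paper avoids this case analysis. It multiplies the three-point inequality of Lemma~\ref{le:coslaw} (with $p=p^*$) by $\theta_t(p_k)/2$ to get
\[
f(p_{k+1})-f^*+\tfrac{t^2}{2}\,\theta_t(p_k)\le \tfrac{\theta_t(p_k)}{2}\bigl(d^2(p_k,p^*)-d^2(p_{k+1},p^*)\bigr).
\]
It then bounds $\theta_t(p_k)\le\Delta_0/t^2$ only on the right-hand side, via \eqref{eq:valuen}, telescopes, and keeps the $\theta$-sum on the left. Lemma~\ref{le:sqthek} and Corollary~\ref{co:ctc}(i) give $\frac{t^2}{2K}\sum_{k=0}^{K-1}\theta_t(p_k)\ge \frac{t}{2d_0}\bigl(f(p_K)-f^*\bigr)$. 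Hence $\bigl(1+\frac{t}{2d_0}\bigr)\bigl(f(p_K)-f^*\bigr)\le \frac{d_0^2\Delta_0}{2t^2K}$, and the factor $1+\frac{t}{2d_0}$ is exactly where $2d_0+t$ comes from.
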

\begin{proof}
Fix \(k\ge0\) with \(p_{k+1}\notin\Omega^*\), otherwise the claim is immediate.
By applying Theorem~\ref{thm:second-brox-Riem}\,(ii), for \(p=p_k\), \(p_{+}=p_{k+1}\), and any \({x}\in\mathcal M\), and also using  Cauchy–Schwarz,  we have 
\[
f(x)-f(p_{k+1})\geq \theta_t(p_k) \big\langle \log_{p_{k+1}}p_k,\ \log_{p_{k+1}}{x}\big\rangle\geq - \theta_t(p_k) \|\log_{p_{k+1}}p_k\| \|\log_{p_{k+1}}x\|.
\]
To simplify the notations,  set   \(d_k:=\operatorname{dist}(p_k,\Omega^*)\). Thus, choosing \(x=x_*\in\Omega^*\), taking into account that \(t_k\equiv t>0\),  \(\|\log_{p_{k+1}}p_k\|=d(p_{k+1},p_k)=t\) and \(\|\log_{p_{k+1}}p_*\|\geq d(p_{k+1},\Omega^*)\) yields
\begin{equation}\label{eq:gap-theta1}
0<f(p_{k+1})-f^*\leq  \theta_t(p_k) t d(p_{k+1},\Omega^*)=\theta_t(p_k)\,t\,d_{k+1}.
\end{equation}
On the other hand,  applying  Theorem~\ref{thm:second-brox-Riem}\,(ii) with $x=p_k$, $p_{+}=p_{k+1}$ and choosing  $p=p_k$ yields  
\[
f(p_k)-f(p_{k+1})
\geq \theta_t(p_k)\,\big\langle \log_{p_{k+1}}p_k,\ \log_{p_{k+1}}p_k\big\rangle
=\theta_t(p_k)\,\|\log_{p_{k+1}}p_k\|^2.
\]
Since $\|\log_{p_{k+1}}p_k\|=d(p_{k+1},p_k)=t$, we have 
\(
f(p_k)-f(p_{k+1}\geq \theta_t(p_k)\,t^2,
\)
which implies that 
\begin{equation}\label{eq:valuen}
\theta_t(p_k)\leq \frac{f(p_k)-f(p_{k+1})}{t^2}.
\end{equation}
Combining \eqref{eq:gap-theta1} with \eqref{eq:valuen} and taking into account that Lemma~\ref{le:RB-PPM-convex}
 implies $d_{k+1}\leq d_{0}$, we have
\begin{equation*}
0<f(p_{k+1})-f^*\leq    \frac{f(p_k)-f(p_{k+1})}{t}d_{k+1}\leq    \frac{f(p_k)-f(p_{k+1})}{t}d_{0}.
\end{equation*}
Summing this inequality for $k=0,\ldots,K-1$ and using the telescoping sums $\sum_{k=0}^{K-1}\!(f(p_k)-f(p_{k+1}))=f(p_0)-f(p_K)\le f(p_0)-f^*$, we obtain
\[
\sum_{k=0}^{K-1}\bigl(f(p_{k+1})-f^*\bigr) \leq  \frac{d_0}{t}\,\bigl(f(p_0)-f^*\bigr).
\]
Since Corollary~\ref{eq:monfv} implies  that $\{f(p_k)\}_{k\in \mathbb{N}}$ is nonincreasing, it follows from the last inequality  that 
\(
K\bigl(f(p_K)-f^*\bigr)\leq \frac{d_0}{t}\bigl(f(p_0)-f^*\bigr).
\)
Therefore,  we conclude that 
\begin{equation} \label{eq:feq}
f(p_K)-f^*\leq \frac{d_0(f(p_0)-f^*)}{t} \frac{1}{K},
\end{equation} 
Now,  we proceed  to obtain a new bound for $f(p_K)-f^*$.  For that, first note that, due to    \(p_{k+1}\notin\Omega^*\),  Theorem~\ref{thm:first-brox-Riem} implies that   $\mathbb B(p_k,t_k)\cap {\Omega^*}= \varnothing$. Thus, applying  Lemma~\ref{le:coslaw} with $p=p^*$ and considering that \(d(p_{k+1},p_k)=t\),  after  some algebraic manipulations,  we  obtain that 
\begin{equation*}
0<f(p_{k+1})-f({p^*}) +  \frac{\theta_{t_k}(p_k)}{2}t^2\leq  \frac{\theta_{t_k}(p_k)}{2}(d^2(p_k,{p^*}) - d^2(p_{k+1},{p^*})). 
\end{equation*}
Thus, using \eqref{eq:valuen} and the inequalities  \(f(p_k)-f(p_{k+1})\le f(p_0)-f^*\) and  \(d^2(p_{k+1},{p}_*) \leq d^2(p_k,{p}_*)\), we conclude  from the  last inequality that 
\begin{equation} \label{eq:coslawap2}
0<f(p_{k+1})-f^* +  \frac{\theta_{t_k}(p_k)}{2}t^2\leq  \frac{f(p_0)-f^*}{2t^2}(d^2(p_k,{p^*}) - d^2(p_{k+1},{p^*})). 
\end{equation}
Summing this  inequality for $k=0,\ldots,K-1$  and noting that the right-hand side
telescopes, we obtain
\[
\sum_{k=0}^{K-1}\!\big(f(p_{k+1})-f^*\big)+\frac{t^2}{2}\sum_{k=0}^{K-1}\theta_{t_k}(p_k) \leq \frac{f(p_0)-f^*}{2t^2}\sum_{k=0}^{K-1}\!\big(d^2(p_k,p^*)-d^2(p_{k+1},p^*)\big)\leq \frac{f(p_0)-f^*}{2t^2}\,d_0^2, 
\]
since \eqref{eq:coslawap2} holds for all $p^*\in \Omega^*$. Thus, considering that  Corollary~\ref{eq:monfv} implies  that $\{f(p_k)\}_{k\in \mathbb{N}}$ is nonincreasing, we conclude that 
\[
K\big(f(p_K)-f^*\big)\leq \sum_{k=0}^{K-1}\!\big(f(p_{k+1})-f^*\big).
\]
Therefore, combining the two previous  inequalities we conclude that 
\begin{equation} \label{eq:qfinq}
(f(p_K)-f^*)  +\frac{1}{K} \frac{t^2}{2}\sum_{k=0}^{K-1}\theta_{t_k}(p_k) \leq \frac{d_0^2(f(p_0)-f^*)}{2t^2} \frac{1}{K}. 
\end{equation} 
Next, we obtain a lower bound for the second term on the left-hand side of \eqref{eq:qfinq}. First, note that  Lemma~\ref{le:sqthek}  implies that  $\{\theta_t(p_k)\}_{k=0}^{K-1}$ is nonincreasing. Thus, 
\begin{equation}\label{eq:avg-theta-start}
\frac{t^{2}}{K}\sum_{k=0}^{K-1}\theta_t(p_k)\geq t^{2}\,\theta_t(p_{K-1}).
\end{equation}
On the other hand, by applying  Corollary~\ref{co:ctc}(i)   with the points  \(p=p_{K-1}\), \(p_{+}=p_K\)  we obtain that 
\begin{equation}\label{eq:theta-lb-last}
\theta_t(p_{K-1})\geq \frac{f(p_K)-f^*}{\,d(p_{K-1},p_K)\,d(p_K,p^*)\,}.
\end{equation}
Multiplying \eqref{eq:theta-lb-last} by $t^2$ and using the boundary identity $d(p_{K-1},p_K)=t$  we concludes that 
\begin{equation}\label{eq:avg-theta-lb-riem}
t^{2}\,\theta_t(p_{K-1})\geq  t\,\frac{f(p_K)-f^*}{\,d(p_K,p^*)\,}.
\end{equation}
Finally, Lemma~\ref{le:RB-PPM-convex}(ii) also implies
$d(p_K,p^*)\le d(p_0,p^*)$. Thus,   combining \eqref{eq:avg-theta-start} and
\eqref{eq:avg-theta-lb-riem} gives
\begin{equation*} \label{eq:fexpb}
\frac{t^{2}}{K}\sum_{k=0}^{K-1}\theta_t(p_k)
\geq t\,\frac{f(p_K)-f^*}{\,d(p_0,p^*)\,}.
\end{equation*}
Combining \eqref{eq:qfinq}  with the preceding inequality, valid for all \(p^*\in\Omega^*\), and rearranging terms, we obtain 
\begin{equation*} 
f(p_K)-f^* \leq \frac{\operatorname{dist}(p_{0},\Omega^*)^3\bigl(f(p_0)-f^*\bigr)}{(2\operatorname{dist}(p_{0},\Omega^*)+t)t^2}\frac{1}{K}.
\end{equation*} 
Therefore, the last inequality together with \eqref{eq:feq} yields \eqref{eq:iclc}. This completes the proof.
\end{proof}

Theorems~\ref{th:linear} and~\ref{thm:D3} provide two complementary types of nonasymptotic guarantees for RB--PPM with constant radii, namely,  a geometric decay of the function values and an explicit $\mathcal{O}(1/K)$ last–iterate bound. To clarify their respective roles, and the dependence of the associated constants on the initial distance and the radius, we briefly compare these results below.

\begin{remark}
Let us make a comparison between Theorems~\ref{th:linear} and~\ref{thm:D3}. For that,   $d_0:=\operatorname{dist}(p_0,\Omega^*)$, $\Delta_0:=f(p_0)-f^*>0$ and  constant radii $t_k\equiv t>0$.\\
\noindent
\textit{\bf (a) Nature of the rates.} Using this notations, Theorem~\ref{th:linear} yields, for every $K\ge1$,
\begin{equation} \label{eq:igr}
f(p_K)-f^* \leq q(d_0,t)^K \Delta_0, \qquad \quad   q(d_0,t):=\frac{d_0}{d_0+t}\in(0,1).
\end{equation} 
that is, a geometric (linear--rate) decay with contraction factor
\[
q(d_0,t):=\frac{d_0}{d_0+t}
=\frac{1}{1+t/d_0}\in(0,1).
\]
In contrast, Theorem~\ref{thm:D3} provides a last--iterate sublinear estimate of the form
\[
f(p_K)-f^*\leq  C(d_0,t)\,\frac{1}{K}\,\Delta_0, \qquad  \quad C(d_0,t):=\min\left\{\frac{d_0}{t},\ \frac{d_0^3}{(2d_0+t)t^2}\right\}.
\]
Thus, Theorem~\ref{th:linear} describes a purely geometric contraction, whereas Theorem~\ref{thm:D3} gives an explicit $\mathcal{O}(1/K)$ bound for the last iterate with a computable constant.\\
\noindent
\textit{\bf (b) Dependence on $d_0$ and $t$.}
Both results depend on the initial distance $d_0$ and on the radius $t$, but in different ways.

 For Theorem~\ref{th:linear}, note that  the factor $q(d_0,t)$ satisfies the following equality 
\[
q(d_0,t)=\frac{d_0}{d_0+t}=1-\frac{t}{d_0+t}.
\]
Thus, for fixed $d_0$,  we conclude that for  \(t\ll d_0\)  we have a slow contraction in \eqref{eq:igr}  due to 
\[
q(d_0,t)\approx 1-\frac{t}{d_0}
\]
while $t$ is comparable with $d_0$ forces $q(d_0,t)$ to be uniformly bounded away from $1$, leading to a faster reduction of $f(p_K)-f^*$.

For Theorem~\ref{thm:D3}, the constant $C(d_0,t)$ also depends on the ratio $t/d_0$. In particular, we can rewrite
\[
C(d_0,t)=\min\left\{\frac{1}{t/d_0},\ \frac{1}{(2+t/d_0)(t/d_0)^2}\right\}=\begin{cases}
\dfrac{d_0}{t}, & 0<t/d_0<\sqrt2-1,\\[1.5ex]
\dfrac{d_0^3}{(2d_0+t)t^2}, & t/d_0\ge\sqrt2-1.
\end{cases}
\]
and one checks that the two terms coincide at $t/d_0=\sqrt2-1$.
Hence, for relatively small radii $t<(\sqrt2-1)d_0$ the first term $d_0/t$ governs the constant, whereas for $t$ comparable to or larger than $d_0$ the second term becomes smaller and yields a sharper $\mathcal{O}(1/K)$ constant. In all cases, starting farther from $\Omega^*$ (larger $d_0$) deteriorates both the geometric factor $q(d_0,t)$ and the constant $C(d_0,t)$.

\smallskip

\noindent
\textit{\bf (c) Complexity interpretation.}
Fix $d_0$, $t$ and a target accuracy $\varepsilon\in(0,\Delta_0)$. From Theorem~\ref{th:linear}, the inequality
\[
\left(\frac{d_0}{d_0+t}\right)^K\Delta_0\le\varepsilon  \ \Longrightarrow\ K\ \ge\ \frac{d_0+t}{t}\,\log\!\left(\frac{\Delta_0}{\varepsilon}\right),
\]
which is the usual logarithmic iteration complexity associated with a linear rate, with a constant factor $(d_0+t)/t$. On the other hand, Theorem~\ref{thm:D3} gives the polynomial requirement
\[
C(d_0,t)\,\frac{1}{K}\,\Delta_0\le\varepsilon
\ \Longrightarrow\
K\ \ge\ \frac{C(d_0,t)\,\Delta_0}{\varepsilon}.
\]
Thus, for fixed $(d_0,t)$ and sufficiently large $K$, the geometric decay in Theorem~\ref{th:linear} is asymptotically stronger than the $\mathcal{O}(1/K)$ estimate of Theorem~\ref{thm:D3}. However, Theorem~\ref{thm:icfv} shows that RB--PPM with constant radius terminates after at most $\lceil d_0^2/t^2\rceil$ iterations, so both bounds should be interpreted as nonasymptotic complexity estimates valid up to the finite hitting time. From this perspective, Theorem~\ref{th:linear} provides a compact product (and, in the constant--radius case, geometric) description of the decay, while Theorem~\ref{thm:D3} complements it with an explicit $\mathcal{O}(1/K)$ last--iterate bound whose constant can be analyzed and optimized in terms of the ratio $t/d_0$.
\end{remark}

%%%%%%%%%%%%%%%%%%%%%%%%%
\subsection{Asymptotic convergence analysis}  \label{sec:AsympAnalysis}
In this section we study the asymptotic behaviour of the sequence generated by RB--PPM under the mild global condition
\(\sum_{k=0}^{+\infty} t_k = +\infty\), which is the natural analogue of the ``unbounded total step size'' assumption
in classical proximal point methods. We show that, in the absence of minimizers, the sequence of values necessarily
converges  the infimum, wheres in the presence of a nonempty solution set the iterates themselves converge to a minimizer.
Thus, Theorem~\ref{thm:dichotomy} provides a complete dichotomy between the cases \(\Omega^*=\varnothing\) and \(\Omega^*\neq\varnothing\).

\begin{theorem} \label{thm:dichotomy}
Let $\{p_k\}_{k\in\mathbb N}$ be the RB-PPM sequence with sequence  of radii $\{t_k\}_{k\ge0}$. Suppose that $\sum_{k=0}^{+\infty} t_k=+\infty$. Then $\lim_{k\to\infty} f(p_k)=f^*$. In addition, 
 if $\Omega^*\neq \varnothing$, then
\(
\lim_{k\to\infty} p_k=p_*\in \Omega^*.
\)
\end{theorem}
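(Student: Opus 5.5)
The plan is to argue by contradiction on the limit of the function values, using a reference point $q$ strictly below that limit in place of a minimizer. The same argument works whether or not $\Omega^*$ is empty, and the convergence of the iterates then follows from Fejér monotonicity.

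\emph{Step 0 (the case $\Omega^*\neq\varnothing$ hit in finite time).} If some ball $\mathbb B(p_k,t_k)$ meets $\Omega^*$, then $p_{k+1}\in\Omega^*$ by Lemma~\ref{le:RB-PPM-convex}(i). From then on $p_{k+1}\in\operatorname{brox}^{t_k}_f(p_k)$ may be taken equal to $p_k$, which triggers the stopping rule of Step~2. Both claims are then immediate. I would make this interpretation explicit: once an optimal iterate is reached the method stops, or equivalently the sequence is constant. Hence we may assume $\mathbb B(p_k,t_k)\cap\Omega^*=\varnothing$ for all $k$. This holds automatically when $\Omega^*=\varnothing$. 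Consequently Lemma~\ref{le:coslaw} applies at every $k$: $d(p_k,p_{k+1})=t_k$, $\theta_{t_k}(p_k)>0$, and \eqref{eq:coslaw} holds.

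\emph{Step 1 (values converge to $f^*$).} By Corollary~\ref{eq:monfv}, $f(p_k)\downarrow L\ge f^*>-\infty$. Suppose $L>f^*$ and pick $q\in\operatorname{dom}f$ with $f(q)<L$. Since $f(q)-f(p_{k+1})<0$ and $\theta_{t_k}(p_k)>0$, inequality \eqref{eq:coslaw} with $p=q$ gives $d(p_{k+1},q)\le d(p_k,q)$. Hence $d(p_{k+1},q)\le R:=d(p_0,q)$ for all $k$. Next repeat the computation in the proof of Lemma~\ref{eq:ppicb} with $q$ in place of $p_*$:
\begin{itemize}
\item Theorem~\ref{thm:second-brox-Riem}(ii) with $x=q$ and Cauchy--Schwarz give $f(p_{k+1})-f(q)\le \theta_{t_k}(p_k)\,t_k\,R$.
\item Theorem~\ref{thm:second-brox-Riem}(ii) with $x=p_k$ gives $f(p_k)-f(p_{k+1})\ge\theta_{t_k}(p_k)t_k^2$.
\end{itemize}
Combining the two,
\[
f(p_k)-f(p_{k+1})\;\ge\;\frac{t_k}{R}\bigl(f(p_{k+1})-f(q)\bigr)\;\ge\;\frac{t_k}{R}\bigl(L-f(q)\bigr).
\]
Summing over $k$ yields $f(p_0)-L\ge \frac{L-f(q)}{R}\sum_k t_k=+\infty$, a contradiction. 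Therefore $L=f^*$.

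\emph{Step 2 (iterates converge when $\Omega^*\neq\varnothing$).} By Lemma~\ref{le:RB-PPM-convex}(ii), $\{p_k\}$ is Fejér convergent to $\Omega^*$, so it is bounded by Theorem~\ref{teo.qf}. By Hopf--Rinow it has a cluster point $\bar p$. Lower semicontinuity and Step~1 give $f(\bar p)\le\liminf f(p_k)=f^*$, so $\bar p\in\Omega^*$. The second part of Theorem~\ref{teo.qf} then gives $p_k\to p_*\in\Omega^*$.

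The main obstacle I expect is Step~1 without a minimizer. Lemma~\ref{le:RB-PPM-convex} cannot be used there, and neither can Theorem~\ref{thm:icfv}, since $\sum t_k=\infty$ does not force $\sum t_k^2=\infty$. The key trick is that \eqref{eq:coslaw} already gives Fejér monotonicity with respect to any strict sublevel point $q$. That supplies the uniform bound $R$ needed to convert $\sum t_k=\infty$ into a contradiction. The finite-hitting case also needs the stated convention on the stopping rule.
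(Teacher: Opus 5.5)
Your proof is correct, but it is organized differently from the paper's. The paper treats the two cases separately. For $\Omega^*=\varnothing$ it inserts a strict sublevel point $\bar p$ into \eqref{eq:coslaw} to get $2\epsilon/\theta_{t_k}(p_k)\le d^2(p_k,\bar p)-d^2(p_{k+1},\bar p)$. It then needs the monotonicity $t_k\theta_{t_k}(p_k)\le t_0\theta_{t_0}(p_0)$ from Lemma~\ref{le:sqthek} to turn this into a telescoping bound on $\sum_k t_k$. For $\Omega^*\neq\varnothing$ it uses the contraction of Lemma~\ref{eq:ppicb} with $\operatorname{dist}(p_{k+1},\Omega^*)\le d_0$, the product bound \eqref{eq:prod}, and the divergence of $\sum_k\ln(1+t_k/d_0)$.

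You use \eqref{eq:coslaw} only to get Fejér monotonicity with respect to the sublevel point $q$, which gives the uniform bound $d(p_{k+1},q)\le R$. You then redo the computation of Lemma~\ref{eq:ppicb} with $q$ in place of $p_*$ and obtain the additive decrease $f(p_k)-f(p_{k+1})\ge (t_k/R)(L-f(q))$. This handles both cases at once and dispenses with Lemma~\ref{le:sqthek} and the logarithmic estimate. What the paper's second case gives in return is the explicit nonasymptotic rate \eqref{eq:prod}; your contradiction argument is purely qualitative.

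Your Step~0 convention is genuinely needed, not cosmetic. If the method may keep selecting arbitrary points of $\operatorname{brox}^{t_k}_f(p_k)$ after reaching $\Omega^*$, the iterates need not converge. For example, take $\mathcal M=\mathbb R$, $f(x)=\max\{0,-x\}$, $p_0=0$ and $t_k\equiv 1$. The choice $p_{k+1}=p_k+1$ is admissible, never triggers the stopping rule, and gives $p_k=k\to\infty$. The paper's proof passes over this by applying \eqref{eq:coslawapac}, which was derived only for $\Omega^*=\varnothing$, in the case $\Omega^*\neq\varnothing$.

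Likewise, your step $f(\bar p)\le\liminf_k f(p_k)=f^*$ is the correct use of lower semicontinuity. The paper instead asserts $f(p_{k_j})\to f(\hat p)$, which does not follow from lower semicontinuity alone, although its conclusion $\hat p\in\Omega^*$ still holds.
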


\begin{proof}
First, assume that  $\Omega^*=\varnothing$. In this case,  Theorem~\ref{thm:second-brox-Riem} implies that  $\{p_k\}_{k\in\mathbb N}\cap \Omega^*=\varnothing$, $d(p_{k+1},p_k)=t_k$ and \( \{\theta_{t_k}(p_k)\}_{k\in\mathbb{N}} \subset \mathbb{R}_{++}\). In addition,  Lemma~\ref{le:coslaw} implies that 
\begin{equation} \label{eq:coslawapac}
d^2(p_{k+1},{p}) \leq d^2(p_k,{p}) - d^2(p_k,p_{k+1})+ \frac{2}{\theta_{t_k}(p_k)}(f({p})-f(p_{k+1})),  \qquad \forall p\in\mathcal{M},\quad   \forall k\in \mathbb{N}.
\end{equation} 
Since $\{p_k\}_{k\in\mathbb N}\cap \Omega^*=\varnothing$ and  \( \{\theta_{t_k}(p_k)\}_{k\in\mathbb{N}} \subset \mathbb{R}_{++}\), substituting $p=p_k$ in the last inequality we conclude that $\{f(p_k)\}_{k\in\mathbb N}$ is  decreasing.  Assume by contradiction that $\lim_{k\to\infty} f(p_k)>f^*$. Due  to the sequence    $\{f(p_k)\}_{k\in\mathbb N}$ be  decreasing and ${f^*}:=\inf_{p\in\mathcal{M}} f(p)$, there exists ${\bar p}\in \mathcal{M}$ and  $\epsilon>0$, such that $f^*<f({\bar p})<f(p_k)-\epsilon$, for all $k\in\mathbb{N}$. Substituting $p={\bar p}$ into \eqref{eq:coslawapac}, after some algebraic manipulations,  we conclude that 
\begin{equation} \label{eq:coslawapac1}
\frac{1}{\theta_{t_k}(p_k)} \leq \frac{1}{2\epsilon} \big(d^2(p_k,{\bar p}) - d^2(p_{k+1},{\bar p})\big),  \qquad   \forall k\in \mathbb{N}.
\end{equation}
It follows from Lemma~\ref{le:sqthek} that the sequence  \( \{t_k\theta_{t_k}(p_k)\}_{k\in\mathbb{N}} \) is nonincreasing. Hence, we conclude that 
\(t_{k} {\theta_{t_k}(p_k)}\leq {t_0\theta_{t_0}(p_0)}\), for all \(k\in \mathbb{N}\). Thus, it follows from \eqref{eq:coslawapac1} that 
\begin{equation} \label{eq:coslawapac2}
t_k  \leq \frac{{t_0\theta_{t_0}(p_0)}}{2\epsilon} \big(d^2(p_k,{\bar p}) - d^2(p_{k+1},{\bar p})\big),  \qquad   \forall k\in \mathbb{N}.
\end{equation}
Summing \eqref{eq:coslawapac2} from $k=0$ to $N-1$, we obtain
\[
\sum_{k=0}^{N-1}t_k  \leq \frac{{t_0\theta_{t_0}(p_0)}}{2\epsilon} \sum_{k=0}^{N-1}\big(d^2(p_k,{\bar p}) - d^2(p_{k+1},{\bar p})\big)< \frac{{t_0\theta_{t_0}(p_0)}}{2\epsilon} d^2(p_0,{\bar p})<+\infty, \qquad \forall N\in \mathbb{N}, 
\]
which contradicts the equality $\sum_{k=0}^{+\infty} t_k=+\infty$. Therefore, if  $\Omega^*=\varnothing$, then $\lim_{k\to\infty} f(p_k)=f^*$.

Now assume that $\Omega^*\neq\varnothing$. For simplifying  we denote \(d_k:=\operatorname{dist}(p_k,\Omega^*)\).  By Lemma~\ref{le:RB-PPM-convex}(i)-(ii),  $\mathrm{dist}(p_{k+1},\Omega^*)\le \mathrm{dist}(p_k,\Omega^*)\le d_0$. Hence, it follows from Lemma~\ref{eq:ppicb}  that 
\[
f(p_{k+1})-f^*\leq  \frac{d_0}{d_0+t_k}\,\bigl(f(p_k)-f^*\bigr), \qquad  \forall k\in \mathbb{N}.
\]
Some calculations show that, for a fixed $K\in \mathbb{N}$,   the last inequality implies that 
\begin{equation}\label{eq:prod}
f(p_K)-f^*\leq  \bigl(f(p_0)-f^*\bigr)\,\prod_{k=0}^{K-1}\frac{d_0}{d_0+t_k} = \bigl(f(p_0)-f^*\bigr)e^{-\sum_{k=0}^{K-1}\ln\!\big(1+\tfrac{t_k}{d_0}\big)}.
\end{equation}
We claim that $\sum_{k=0}^{+\infty} \ln(1+t_k/d_0)=+\infty$ whenever $\sum_{k=0}^\infty t_k=+\infty$.
Indeed, split the index set into
\[
\mathcal I_1:=\{k:\ t_k\le d_0\},\qquad \mathcal I_2:=\{k:\ t_k>d_0\}.
\]
For $k\in\mathcal I_1$,   i.e.,   for $t_k\le d_0$,  we have the bound $\ln(1+\tfrac{t_k}{d_0})\geq \tfrac{t_k}{2d_0}$. For $k\in\mathcal I_2$, we have the bound  $\ln(1+t_k/d_0)\ge \ln 2$.
Hence,
\[
\sum_{k=0}^{K-1}\ln\!\big(1+\tfrac{t_k}{d_0}\big)
\geq  \frac{1}{2d_0}\sum_{k\in\mathcal I_1\cap[0:K-1]} t_k+(\log 2)\,|\mathcal I_2\cap[0:K-1]|, 
\]
where \(,|\mathcal I_i\cap[0:K-1]|:= \#\{\,k\in\{0,1,\ldots,K-1\}:\ k\in \mathcal I_i\,\}\), for $i=1,2$. Hence, if $\sum_{k=0}^\infty t_k=+\infty$, then either $\sum_{k\in\mathcal I_1}t_k=+\infty$ and the first term diverges or $\mathcal I_2$ is infinite  and the second term diverges. In both cases the sum diverges, so the product in \eqref{eq:prod} tends to zero and we conclude
\begin{equation} \label{eq:ctfs}
\lim_{k\to\infty}\bigl(f(p_k)-f^*\bigr)=0.
\end{equation} 
Since $\Omega^*\neq\varnothing$, fix ${\bar p}\in\Omega^*$. 
Because $\{f(p_k)\}_{k\in\mathbb N}$ is decreasing and $f({\bar p})=f^*$, we conclude that $f({\bar p})\le f(p_k)$,  for all $k\in\mathbb N$.
Substituting $p={\bar p}$ into \eqref{eq:coslawapac} yields
\[
d^{2}(p_{k+1},{\bar p})\leq  d^{2}(p_k,{\bar p}), \qquad  \forall k\in \mathbb{N}.
\]
Hence, by Definition~\ref{def:QuasiFejer}, the sequence $\{p_k\}_{k\in\mathbb N}$ is Fejér convergent to $\Omega^*$.  Since $\Omega^*\neq\varnothing$, it follows from Theorem~\ref{teo.qf} that  $\{p_k\}_{k\in\mathbb N}$   is bounded. Considering that  \(\mathcal M\) is complete, $\{p_k\}_{k\in\mathbb N}$  admits a convergent subsequence. Let  $\{p_{k_j}\}_{j\in\mathbb N}$  be a subsequence with \(\lim_{j\to\infty} p_{k_j}= {\hat  p}\).
Because \(f\) is  lower semicontinuous continuous we have  \(\lim_{j\to\infty} f(x_j)=f( {\hat  p})\). Moreover, because  $\{f(p_k)\}_{k\in\mathbb N}$ be  decreasing we conclude that the whole sequence converges, i.e., $\lim_{k\to\infty} f(p_k)=f( {\hat  p})$, which combined with \eqref{eq:ctfs} implies that \({\hat  p}\in \Omega^*\). Therefore, Fejér convergnece to the nonempty closed set $\Omega^*$ together with the existence of a cluster point \(\hat p\in\Omega^*\) implies, by Theorem~\ref{teo.qf}, that the whole sequence converges, i.e., \(\lim_{k\to\infty} p_k=p_*\in \Omega^*\), which proves the theorem.
\end{proof}

\begin{corollary}\label{cor:asym-constant}
If \(t_k\equiv t>0\), then the conclusions of Theorem~\ref{thm:dichotomy} hold. In particular,
\(
\lim_{k\to\infty} f(p_k) = f^*
\)
and, if \(\Omega^*\neq\varnothing\), there exists \(p_*\in\Omega^*\) such that
\(\lim_{k\to\infty} p_k = p_*\).
\end{corollary}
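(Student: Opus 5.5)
This corollary should follow by specializing Theorem~\ref{thm:dichotomy} to constant radii. The only hypothesis of that theorem is $\sum_{k=0}^{+\infty} t_k=+\infty$. With $t_k\equiv t>0$ the partial sums are $\sum_{k=0}^{N-1}t_k=Nt$, which tend to $+\infty$ as $N\to\infty$. Hence the hypothesis holds, and Theorem~\ref{thm:dichotomy} gives both conclusions directly: $\lim_{k\to\infty}f(p_k)=f^*$, and, when $\Omega^*\neq\varnothing$, the existence of $p_*\in\Omega^*$ with $\lim_{k\to\infty}p_k=p_*$.

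For the case $\Omega^*\neq\varnothing$, I would also add a short independent argument that does not rely on the limiting analysis of the theorem. By Theorem~\ref{thm:icfv}, with constant radius $t$, RB--PPM reaches $\Omega^*$ within $\bar K:=\lceil d^2(p_0,\Omega^*)/t^2\rceil$ iterations, so $p_{\bar K}\in\Omega^*$.

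Once some iterate $p_{\bar K}$ lies in $\Omega^*$, every later ball $\mathbb{B}(p_k,t)$ with $k\ge\bar K$ contains $p_k\in\Omega^*$. By Theorem~\ref{thm:first-brox-Riem}\,(i), every subsequent iterate then stays in $\Omega^*$. Consequently $f(p_k)=f^*$ for all $k\ge \bar K$, which already gives the value limit.

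The iterates themselves still need not be eventually constant: the set $\operatorname{brox}^t_f(p_k)$ may contain several points of $\Omega^*$, and the stopping rule only triggers when $p_{k+1}=p_k$. To handle this, I would go back to the Fejér argument used in Theorem~\ref{thm:dichotomy}. Fejér monotonicity to $\Omega^*$, via Lemma~\ref{le:coslaw} applied in the disjoint-ball steps, gives boundedness. Every cluster point then lies in $\Omega^*$ by lower semicontinuity together with $f(p_k)\to f^*$. Theorem~\ref{teo.qf} then yields convergence of the whole sequence.

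The only delicate point I expect is this post-hitting behaviour of the iterates. I would simply defer to Theorem~\ref{thm:dichotomy}, whose conclusion already covers it, so the corollary reduces to checking that $\sum_k t=+\infty$.
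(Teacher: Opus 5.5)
Your reduction (constant radii give $\sum_k t_k=+\infty$, then apply Theorem~\ref{thm:dichotomy}) is exactly the paper's proof. Your direct treatment of the values when $\Omega^*\neq\varnothing$ is correct and simpler than going through the theorem: you hit $\Omega^*$ by step $\bar K$ via Theorem~\ref{thm:icfv}, stay there by Theorem~\ref{thm:first-brox-Riem}\,(i), and so $f(p_k)=f^*$ for $k\ge\bar K$. The supplementary argument for the iterates, however, breaks at precisely the point you flagged. Lemma~\ref{le:coslaw} needs $\mathbb{B}(p_k,t)\cap\Omega^*=\varnothing$, but for every $k\ge\bar K$ the ball contains $p_k\in\Omega^*$. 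So on the entire tail you have no Fejér inequality, and Theorem~\ref{teo.qf} cannot be applied. This cannot be patched. On $\mathcal M=\mathbb R$ take $f(x)=\max\{|x|-1,0\}$, $t=2$, $p_0=1$. Then $\operatorname{brox}^2_f(1)=\operatorname{brox}^2_f(-1)=[-1,1]$, so $p_k=(-1)^k$ is an admissible RB--PPM sequence. Step~2 never fires because $p_{k+1}\neq p_k$, Fejér monotonicity with respect to $1\in\Omega^*$ fails already at $k=0$, and $\{p_k\}$ does not converge. With $f\equiv 0$ and $p_k=kt$ the iterates are even unbounded.

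Deferring to Theorem~\ref{thm:dichotomy} does not resolve the issue either. In the case $\Omega^*\neq\varnothing$, its proof substitutes $p=\bar p$ into \eqref{eq:coslawapac} for every $k$. That inequality comes from Lemma~\ref{le:coslaw}, so it is only available on steps whose ball misses $\Omega^*$. The value conclusion $f(p_k)\to f^*$ survives, but the iterate conclusion is false for an arbitrary selection from the broximal set.

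To make the second claim true you need a selection convention, for instance $p_{k+1}=p_k$ whenever $p_k\in\operatorname{brox}^t_f(p_k)$. Since $p_k$ is the centre of the ball and local minimizers of a convex $f$ are global, this happens exactly when $p_k\in\Omega^*$. Step~2 then stops the method at the first hitting index, which is at most $\bar K$. Your finite-hitting argument alone then proves the whole corollary, with the eventually constant sequence converging to a point of $\Omega^*$. Without such a convention, the strongest iterate statement your argument correctly yields is $p_k\in\Omega^*$ for all $k\ge\bar K$.
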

\begin{proof}
Since \(\sum_{k=0}^{+\infty} t_k = +\infty\) whenever \(t_k \equiv t > 0\), the result  follows directly from Theorem~\ref{thm:dichotomy}.
\end{proof}

\begin{remark}
Theorem~\ref{thm:dichotomy} complements the nonasymptotic results of Section~\ref{sec:IntComplAnalysis} in two directions.
First, in the constant--radius case \(t_k\equiv t>0\), Theorem~\ref{thm:icfv} guarantees finite termination after at most
\(\lceil \operatorname{dist}^2(p_0,\Omega^*)/t^2\rceil\) steps when \(\Omega^*\neq\varnothing\); in this regime, the convergence
statements of Theorem~\ref{thm:dichotomy} are trivially satisfied. Second, when the radii are allowed to vary but still satisfy
\(\sum_{k=0}^{+\infty} t_k = +\infty\), Theorem~\ref{thm:dichotomy} ensures that the global behaviour of RB--PPM remains well controlled, namely, 
the function values always converge to the infimum, and the iterates converge to a minimizer whenever one exists. Combined with the
linear and $\mathcal{O}(1/K)$ bounds in Theorems~\ref{th:linear} and~\ref{thm:D3}, this yields a complete picture of both the global
and nonasymptotic behaviour of RB--PPM.
\end{remark}

%%%%%%%%%%%%%%%%%%%%%%%%%%%%
\section{Numerical experiments}\label{Sec:NumExp}

This section presents numerical experiments on a classical and controlled benchmark problem, designed to illustrate the practical behavior of ball-proximal schemes. Although Euclidean ball-proximal methods were introduced in \cite{gruntkowska2025,gruntkowska2025n}, the existing literature is predominantly theoretical and offers only limited numerical illustration. The experiments below are therefore intended as an initial comparative computational study of the broximal approach, highlighting its practical behavior relative to proximal and gradient methods. The experiments were conducted in MATLAB version R2024b (24.2.0.2712019), on a computer with a 3.7 GHz Intel Core i5 6-Core processor and 8 GB 2667 MHz DDR4 RAM, running macOS Sequoia 15.7.2. The codes are available at \url{https://github.com/lfprudente/Broximal}.

Although the theory developed in this paper is stated in the general setting of Hadamard manifold, we consider here the simplest Euclidean test case, namely,  the minimization of a strongly convex quadratic function in $\mathbb{R}^n$. Specifically, we consider
\begin{equation}\label{eq:quadratic}
    \min_{x\in\mathbb{R}^n} f(x) := \frac{1}{2}x^\top A x,
\end{equation}
where $A\in\mathbb{R}^{n\times n}$ is symmetric positive definite.   This elementary and controlled setting allows us to isolate the algorithmic behavior of the methods without additional geometric or modeling effects. The resulting observations provide a first indication of the practical behavior of ball-proximal schemes, including the influence of the radius selection strategy, and may serve as a useful reference for future studies on more general, more challenging, and intrinsically non-Euclidean problems. The comparison with proximal and gradient methods serves primarily as a reference, allowing us to assess the broximal schemes against methods whose behavior on this class of problems is already well known. For comparison, we consider four algorithms:

\begin{itemize}
    \item {\bf Broximal-F:} RB-PPM with {\it fixed radius} $t_k\equiv t$;
    \item {\bf Broximal-A:} RB-PPM with {\it adaptive radius}
    \(
        t_k=\min\{t_{\max},\max\{t_{\min},\alpha\|\nabla f(x_k)\|\}\},
    \)
    where $\alpha$, $t_{\min}$, and $t_{\max}$ are algorithmic parameters;
    \item {\bf Broximal-P:} RB-PPM with {\it Polyak-type radius}
    \(
        t_k= \min\left\{ t_{\max},  \max\left\{t_{\min},\beta\tfrac{f(x_k)-f^*}{\|\nabla f(x_k)\|}\right\}\right\}
    \),
    where $\beta$, $t_{\min}$, and $t_{\max}$ are algorithmic parameters;
    \item {\bf Proximal:} the classical proximal point algorithm \eqref{eq:ppm} with $\lambda_k\equiv \lambda$;
    \item {\bf Gradient:} the gradient method with Armijo line search applied directly to the original problem.
\end{itemize}

The {\it adaptive radius choice} is motivated by the descent lemma for geodesically \(L\)-smooth functions and by the fact that, at every nonstationary iteration of RB--PPM, one has \(d(x_k,x_{k+1})=t_k.\) Thus, the radius is not merely a feasibility parameter; it is exactly the length of the displacement produced by the method. To make this connection precise, define
\[
\tau_k:=\min\Bigl\{\frac{t_k}{\|\grad f(x_k)\|},\frac1L\Bigr\},
\qquad
y_k:={\rm exp}_{x_k}\!\bigl(-\tau_k \grad f(x_k)\bigr).
\]
Then, \(d(x_k,y_k)=\tau_k\|\grad f(x_k)\|\le t_k,\) whivh implies that  \(y_k\in {B}[x_k,t_k]\). Since \(x_{k+1}\) minimizes \(f\) over \({B}[x_k,t_k]\), we have
\(f(x_{k+1})\le f(y_k).\) Using the descent lemma along the geodesic gradient step, we obtain
\[
f(x_k)-f(x_{k+1})
\ge
\tau_k\Bigl(1-\frac{L\tau_k}{2}\Bigr)\|\grad f(x_k)\|^2.
\]
Equivalently,
\[
f(x_k)-f(x_{k+1})
\ge
\begin{cases}
t_k\|\grad f(x_k)\|-\dfrac{L}{2}t_k^2,
& \text{if } t_k\le \dfrac{\|\grad f(x_k)\|}{L},\\[0.8em]
\dfrac{1}{2L}\|\grad f(x_k)\|^2,
& \text{if } t_k\ge \dfrac{\|\grad f(x_k)\|}{L}.
\end{cases}
\]
Therefore, when \(t_k\le \|\grad f(x_k)\|/L\), the guaranteed decrease is a concave quadratic function of the step length \(t_k\). In particular, from the viewpoint of this lower bound, enlarging the radius is useful only up to the natural scale \(\|\grad f(x_k)\|/L\). Beyond this threshold, the guaranteed decrease no longer improves, while larger radii may make the ball-constrained subproblem more expensive to solve. This suggests choosing the parameter \(\alpha\) in the adaptive rule on the scale of the inverse local Lipschitz constant.

 A {\it second natural choice is the Polyak-type radius}, which is the ball-radius analogue of the classical Polyak stepsize. Indeed, in subgradient-type  methods the Polyak rule scales the step length by the ratio between the current objective gap and the norm of a first-order residual. In the present context, since the radius plays the role of a step-length parameter and exact boundary steps satisfy \(d(x_{k+1},x_k)=t_k\), this choice provides a natural way to adapt the displacement to both function values and first-order information. 
In particular, under the convexity assumption, for any \(p^\ast\in\Omega^\ast\) one has\(f(x_k)-f^\ast \le \|\nabla f(x_k)\|\,d(x_k,p^\ast),\) and therefore
\[
0<\tfrac{f(x_k)-f^\ast}{\|\nabla f(x_k)\|}\le \operatorname{dist}(x_k,\Omega^\ast).
\]
Hence, near the solution set the quotient \((f(x_k)-f^\ast)/\|\nabla f(x_k)\|\) is automatically small, leading to smaller and more conservative radii.

All methods were stopped when
\[
    \|\nabla f(x_k)\|\le \varepsilon_{\rm opt}, \qquad \varepsilon_{\rm opt}=10^{-6}.
\]
Several values were tested for each algorithmic parameter. The representative values considered in the experiments are
\begin{equation} \label{eq;pchoice}
\small
  t\in\{0.05,0.10,0.50\},\quad \alpha\in\{10^{-4},10^{-3},10^{-2}\},\quad \beta\in\{10^{-3},10^{-2},10^{-1}\},\quad \lambda\in\{10^{-3},10^{-1},2\}.
\end{equation} 
Here, $t$ is the fixed radius in Broximal-F, $\alpha$ is the scaling parameter in Broximal-A, $\beta$ is the scaling parameter in Broximal-P and $\lambda$ is the regularization parameter in the Proximal algorithm. For both Broximal-A and  Broximal-P, we used $t_{\min}=\varepsilon_{\rm opt}^{1/3}$ and $t_{\max}=10^6$.

The theoretical RB--PPM studied in this paper is an exact scheme, namely,  at each outer iteration, the next iterate is defined as a minimizer of $f$ over the ball $B(x_k,t_k)$. Since solving this subproblem exactly is generally impractical, we consider an inexact implementation. {\it For all ball-proximal variants, the ball subproblems are approximately solved by a  projected gradient method with Armijo line search}. The inner solver is stopped by a first-order residual criterion with an adaptive tolerance. This tolerance is initialized at $\sqrt{\varepsilon_{\rm opt}}$, bounded below by $\varepsilon_{\rm opt}$, and progressively reduced as the outer optimality residual decreases. For the ball-proximal subproblems, the tolerance is additionally bounded above by $0.1t_k$, so that the accuracy of the inner solution is compatible with the current ball radius. For a fair comparison, the {\it proximal method was implemented in a comparable outer-inner fashion, with each proximal subproblem solved approximately by the gradient method with Armijo line search}. The same line search parameters were used throughout, namely Armijo parameter \(10^{-4}\) and backtracking factor \(1/2\).

For each dimension $n\in\{100,200,\ldots,1000\}$, the matrix $A$ in \eqref{eq:quadratic} was generated as
\[
    A = UDU^\top,
\]
where \(D=\operatorname{diag}(d_1,\ldots,d_n)\), with \(d_i\) equally spaced in \([1,1000]\), and \(U\) is an orthogonal matrix obtained from a random matrix. 
Observe that, in this case, \(f\) has Lipschitz constant \(L=10^{3}\), which provides a rationale for the choice of the algorithmic parameters stated in \eqref{eq;pchoice}. The initial point \(x^0\) was generated with independent components uniformly distributed in \([-3,3]\).

Table~\ref{tab:results} reports the numerical results. In the table, $n$ denotes the number of variables, \emph{Alg.} identifies the algorithm, and \emph{Param.} gives the corresponding algorithmic parameter, namely,  the fixed radius $t$ for Broximal-F, the scaling parameter $\alpha$ for Broximal-A, the scaling parameter $\beta$ for Broximal-P and the proximal parameter $\lambda$ for Proximal. The column \emph{Outer(Inner)} reports the number of outer iterations and, in parentheses, the total number of inner iterations used to solve the subproblems. The column \#$f$ gives the total number of function evaluations, and \emph{time} gives the CPU time in seconds. The smallest reported values of \#$f$ and of CPU time for each dimension are highlighted in bold. We also note that all algorithms solved all instances within the prescribed tolerance. The typical final objective value $f(x^*)$ was of order $\mathcal{O}(10^{-13})$, while the typical final optimality residual $\|\nabla f(x^*)\|$ was of order $\mathcal{O}(10^{-7})$, where $x^*$ denotes the solution returned by the corresponding method. Since all methods produced comparable final objective values and optimality residuals, these quantities were omitted from the table to improve readability.

\begin{table}[h!]
\centering
\tiny
\begin{minipage}[t]{0.45\textwidth}
\begin{tabular}{|c|ccr@{\hspace{0.1em}}ccc|}
\hline
$n$ & Alg. & Param. & \multicolumn{2}{c}{Outer(Inner)} & \#$f$ & time \\ 
\hline

 &  & 0.05 & 439 & (6910) & 61471 & 0.84 \\ 
 &  & 0.10 & 221 & (5897) & 53009 & 0.72 \\ 
 & \multirowcell{-3}{Broximal-F} & 0.50 & 46 & (5477) & 50537 & 0.71 \\ 
\hhline{*1{~}|*6{-}|}
 &  & $10^{-4}$ & 501 & (3320) & \bf 25098 & \bf 0.35 \\ 
 &  & $10^{-3}$ & 167 & (5241) & 44887 & 0.59 \\ 
 & \multirowcell{-3}{Broximal-A} & $10^{-2}$ & 4 & (5057) & 50333 & 0.65 \\ 
\hhline{*1{~}|*6{-}|}
 &  & $10^{-3}$ & 2181 & (8504) & 62981 & 0.82 \\ 
 &  & $10^{-2}$ & 979 & (3972) & 29342 & 0.39 \\ 
 & \multirowcell{-3}{Broximal-P} & $10^{-1}$ & 150 & (5424) & 49120 & 0.60 \\ 
\hhline{*1{~}|*6{-}|}
 &  & $10^{-3}$ & 5 & (5435) & 54138 & 0.69 \\ 
 &  & $10^{-1}$ & 7 & (7400) & 73709 & 0.94 \\ 
 & \multirowcell{-3}{Proximal} & $2$ & 25 & (22680) & 225889 & 2.96 \\ 
\hhline{*1{~}|*6{-}|}
\multirowcell{-13}{100} & Gradient & -- & \multicolumn{2}{c}{5054} & 50343 & 0.83 \\ 
\hline\hline

 &  & 0.05 & 595 & (8988) & 75434 & 1.43 \\ 
 &  & 0.10 & 299 & (11313) & 100444 & 1.89 \\ 
 & \multirowcell{-3}{Broximal-F} & 0.50 & 62 & (10054) & 89097 & 1.72 \\ 
\hhline{*1{~}|*6{-}|}
 &  & $10^{-4}$ & 638 & (8810) & 73259 & 1.35 \\ 
 &  & $10^{-3}$ & 239 & (5463) & \bf 43146 & \bf 0.83 \\ 
 & \multirowcell{-3}{Broximal-A} & $10^{-2}$ & 4 & (6581) & 65501 & 1.26 \\ 
\hhline{*1{~}|*6{-}|}
 &  & $10^{-3}$ & 2964 & (9962) & 66362 & 1.34 \\ 
 &  & $10^{-2}$ & 1083 & (7326) & 54943 & 1.28 \\ 
 & \multirowcell{-3}{Broximal-P} & $10^{-1}$ & 179 & (9903) & 83480 & 1.63 \\ 
\hhline{*1{~}|*6{-}|}
 &  & $10^{-3}$ & 5 & (7224) & 71952 & 1.52 \\ 
 &  & $10^{-1}$ & 7 & (11050) & 110051 & 3.04 \\ 
 & \multirowcell{-3}{Proximal} & $2$ & 33 & (41167) & 409981 & 10.68 \\ 
\hhline{*1{~}|*6{-}|}
\multirowcell{-13}{200} & Gradient & -- & \multicolumn{2}{c}{6593} & 65667 & 2.69 \\ 
\hline\hline

 &  & 0.05 & 749 & (9500) & 79478 & 2.20 \\ 
 &  & 0.10 & 376 & (10095) & 88069 & 2.25 \\ 
 & \multirowcell{-3}{Broximal-F} & 0.50 & 77 & (10323) & 83299 & 2.44 \\ 
\hhline{*1{~}|*6{-}|}
 &  & $10^{-4}$ & 603 & (7993) & 64345 & 1.81 \\ 
 &  & $10^{-3}$ & 232 & (6463) & \bf 51166 & \bf 1.63 \\ 
 & \multirowcell{-3}{Broximal-A} & $10^{-2}$ & 4 & (6836) & 68043 & 1.95 \\ 
\hhline{*1{~}|*6{-}|}
 &  & $10^{-3}$ & 3638 & (11549) & 73689 & 1.85 \\ 
 &  & $10^{-2}$ & 1128 & (8822) & 69054 & 1.67 \\ 
 & \multirowcell{-3}{Broximal-P} & $10^{-1}$ & 184 & (8348) & 64453 & 1.61 \\ 
\hhline{*1{~}|*6{-}|}
 &  & $10^{-3}$ & 5 & (7591) & 75604 & 2.27 \\ 
 &  & $10^{-1}$ & 7 & (11717) & 116689 & 3.77 \\ 
 & \multirowcell{-3}{Proximal} & $2$ & 34 & (44327) & 441442 & 18.67 \\ 
\hhline{*1{~}|*6{-}|}
\multirowcell{-13}{300} & Gradient & -- & \multicolumn{2}{c}{6841} & 68135 & 3.12 \\ 
\hline\hline

 &  & 0.05 & 902 & (6450) & 49644 & 1.64 \\ 
 &  & 0.10 & 452 & (7860) & 64551 & 2.15 \\ 
 & \multirowcell{-3}{Broximal-F} & 0.50 & 92 & (8115) & 69066 & 2.22 \\ 
\hhline{*1{~}|*6{-}|}
 &  & $10^{-4}$ & 845 & (8691) & 68528 & 2.27 \\ 
 &  & $10^{-3}$ & 377 & (5967) & 46080 & 1.88 \\ 
 & \multirowcell{-3}{Broximal-A} & $10^{-2}$ & 4 & (6849) & 68174 & 2.21 \\ 
\hhline{*1{~}|*6{-}|}
 &  & $10^{-3}$ & 4265 & (12119) & 74018 & 2.39 \\ 
 &  & $10^{-2}$ & 1288 & (5903) & 37440 & 1.10 \\ 
 & \multirowcell{-3}{Broximal-P} & $10^{-1}$ & 187 & (4678) & \bf 33843 & \bf 0.93 \\ 
\hhline{*1{~}|*6{-}|}
 &  & $10^{-3}$ & 5 & (7581) & 75505 & 2.09 \\ 
 &  & $10^{-1}$ & 7 & (11631) & 115834 & 3.21 \\ 
 & \multirowcell{-3}{Proximal} & $2$ & 34 & (44165) & 439829 & 14.21 \\ 
\hhline{*1{~}|*6{-}|}
\multirowcell{-13}{400} & Gradient & -- & \multicolumn{2}{c}{6860} & 68325 & 5.36 \\ 
\hline\hline

 &  & 0.05 & 1047 & (8044) & 65861 & 2.09 \\ 
 &  & 0.10 & 524 & (4747) & \bf 34172 & \bf 1.13 \\ 
 & \multirowcell{-3}{Broximal-F} & 0.50 & 107 & (8634) & 69178 & 2.33 \\ 
\hhline{*1{~}|*6{-}|}
 &  & $10^{-4}$ & 1065 & (7441) & 55117 & 1.75 \\ 
 &  & $10^{-3}$ & 442 & (8487) & 71393 & 2.57 \\ 
 & \multirowcell{-3}{Broximal-A} & $10^{-2}$ & 4 & (6474) & 64438 & 2.04 \\ 
\hhline{*1{~}|*6{-}|}
 &  & $10^{-3}$ & 4850 & (11042) & 58099 & 2.16 \\ 
 &  & $10^{-2}$ & 1306 & (7862) & 59289 & 2.28 \\ 
 & \multirowcell{-3}{Broximal-P} & $10^{-1}$ & 183 & (7444) & 60155 & 2.04 \\ 
\hhline{*1{~}|*6{-}|}
 &  & $10^{-3}$ & 5 & (7069) & 70407 & 2.36 \\ 
 &  & $10^{-1}$ & 7 & (10523) & 104802 & 3.47 \\ 
 & \multirowcell{-3}{Proximal} & $2$ & 32 & (38933) & 387729 & 14.51 \\ 
\hhline{*1{~}|*6{-}|}
\multirowcell{-13}{500} & Gradient & -- & \multicolumn{2}{c}{6488} & 64621 & 5.76 \\ 
\hline
\end{tabular}
\end{minipage} 
\hspace{20pt}
%==============================================================================
\begin{minipage}[t]{0.45\textwidth}
\begin{tabular}{|c|ccr@{\hspace{0.1em}}ccc|}
\hline
$n$ & Alg. & Param. & \multicolumn{2}{c}{Outer(Inner)} & \#$f$ & time \\ 
\hline

 &  & 0.05 & 1103 & (11281) & 93490 & 4.90 \\ 
 &  & 0.10 & 552 & (5163) & \bf 38093 & \bf 1.72 \\ 
 & \multirowcell{-3}{Broximal-F} & 0.50 & 112 & (8947) & 78871 & 3.50 \\ 
\hhline{*1{~}|*6{-}|}
 &  & $10^{-4}$ & 870 & (8671) & 70363 & 3.01 \\ 
 &  & $10^{-3}$ & 300 & (5590) & 42884 & 1.94 \\ 
 & \multirowcell{-3}{Broximal-A} & $10^{-2}$ & 4 & (6462) & 64318 & 2.56 \\ 
\hhline{*1{~}|*6{-}|}
 &  & $10^{-3}$ & 4898 & (10551) & 53079 & 2.19 \\ 
 &  & $10^{-2}$ & 1247 & (10004) & 78651 & 3.09 \\ 
 & \multirowcell{-3}{Broximal-P} & $10^{-1}$ & 188 & (6113) & 47751 & 2.17 \\ 
\hhline{*1{~}|*6{-}|}
 &  & $10^{-3}$ & 5 & (7043) & 70148 & 4.57 \\ 
 &  & $10^{-1}$ & 7 & (10684) & 106405 & 5.52 \\ 
 & \multirowcell{-3}{Proximal} & $2$ & 32 & (39142) & 389812 & 18.61 \\ 
\hhline{*1{~}|*6{-}|}
\multirowcell{-13}{600} & Gradient & -- & \multicolumn{2}{c}{6453} & 64272 & 5.40 \\ 
\hline\hline

 &  & 0.05 & 1158 & (6181) & 44009 & 1.95 \\ 
 &  & 0.10 & 580 & (7573) & 59648 & 2.55 \\ 
 & \multirowcell{-3}{Broximal-F} & 0.50 & 118 & (9455) & 81296 & 3.44 \\ 
\hhline{*1{~}|*6{-}|}
 &  & $10^{-4}$ & 977 & (6081) & 39453 & 1.78 \\ 
 &  & $10^{-3}$ & 412 & (7469) & 60274 & 2.65 \\ 
 & \multirowcell{-3}{Broximal-A} & $10^{-2}$ & 4 & (7191) & 71577 & 3.01 \\ 
\hhline{*1{~}|*6{-}|}
 &  & $10^{-3}$ & 5105 & (10732) & 50591 & 2.42 \\ 
 &  & $10^{-2}$ & 1349 & (6416) & \bf 38903 & \bf 1.71 \\ 
 & \multirowcell{-3}{Broximal-P} & $10^{-1}$ & 192 & (8751) & 67214 & 3.13 \\ 
\hhline{*1{~}|*6{-}|}
 &  & $10^{-3}$ & 5 & (8146) & 81132 & 3.33 \\ 
 &  & $10^{-1}$ & 7 & (12651) & 125990 & 5.37 \\ 
 & \multirowcell{-3}{Proximal} & $2$ & 36 & (49317) & 491135 & 28.80 \\ 
\hhline{*1{~}|*6{-}|}
\multirowcell{-13}{700} & Gradient & -- & \multicolumn{2}{c}{7193} & 71641 & 9.32 \\ 
\hline\hline

 &  & 0.05 & 1288 & (6700) & \bf 46591 & \bf  2.88 \\ 
 &  & 0.10 & 645 & (6732) & 52296 & 3.09 \\ 
 & \multirowcell{-3}{Broximal-F} & 0.50 & 131 & (11952) & 99591 & 7.56 \\ 
\hhline{*1{~}|*6{-}|}
 &  & $10^{-4}$ & 1033 & (7842) & 53904 & 3.09 \\ 
 &  & $10^{-3}$ & 473 & (9356) & 74873 & 4.08 \\ 
 & \multirowcell{-3}{Broximal-A} & $10^{-2}$ & 4 & (7504) & 74695 & 4.78 \\ 
\hhline{*1{~}|*6{-}|}
 &  & $10^{-3}$ & 5449 & (13510) & 75545 & 6.05 \\ 
 &  & $10^{-2}$ & 1446 & (9375) & 70659 & 4.42 \\ 
 & \multirowcell{-3}{Broximal-P} & $10^{-1}$ & 195 & (12439) & 91864 & 5.78 \\ 
\hhline{*1{~}|*6{-}|}
 &  & $10^{-3}$ & 5 & (8730) & 86946 & 4.75 \\ 
 &  & $10^{-1}$ & 8 & (14672) & 146112 & 8.28 \\ 
 & \multirowcell{-3}{Proximal} & $2$ & 37 & (53698) & 534761 & 32.64 \\ 
\hhline{*1{~}|*6{-}|}
\multirowcell{-13}{800} & Gradient & -- & \multicolumn{2}{c}{7529} & 74986 & 11.23 \\ 
\hline\hline

 &  & 0.05 & 1307 & (10169) & 81702 & 5.51 \\ 
 &  & 0.10 & 655 & (11517) & 96444 & 6.58 \\ 
 & \multirowcell{-3}{Broximal-F} & 0.50 & 133 & (11228) & 93907 & 6.79 \\ 
\hhline{*1{~}|*6{-}|}
 &  & $10^{-4}$ & 966 & (7293) & 52514 & 4.57 \\ 
 &  & $10^{-3}$ & 387 & (4088) & \bf 28231 & \bf 3.43 \\ 
 & \multirowcell{-3}{Broximal-A} & $10^{-2}$ & 4 & (6866) & 68334 & 5.37 \\ 
\hhline{*1{~}|*6{-}|}
 &  & $10^{-3}$ & 5512 & (13959) & 80800 & 7.44 \\ 
 &  & $10^{-2}$ & 1356 & (8504) & 62961 & 5.02 \\ 
 & \multirowcell{-3}{Broximal-P} & $10^{-1}$ & 194 & (8485) & 70221 & 4.65 \\ 
\hhline{*1{~}|*6{-}|}
 &  & $10^{-3}$ & 5 & (7607) & 75760 & 5.03 \\ 
 &  & $10^{-1}$ & 7 & (11759) & 117102 & 9.24 \\ 
 & \multirowcell{-3}{Proximal} & $2$ & 34 & (44386) & 442022 & 37.21 \\ 
\hhline{*1{~}|*6{-}|}
\multirowcell{-13}{900} & Gradient & -- & \multicolumn{2}{c}{6872} & 68441 & 17.17 \\ 
\hline\hline

 &  & 0.05 & 1447 & (7740) & \bf 55806 & \bf 8.51 \\ 
 &  & 0.10 & 725 & (10760) & 87617 & 12.16 \\ 
 & \multirowcell{-3}{Broximal-F} & 0.50 & 147 & (11161) & 92814 & 13.30 \\ 
\hhline{*1{~}|*6{-}|}
 &  & $10^{-4}$ & 1282 & (9534) & 72604 & 10.86 \\ 
 &  & $10^{-3}$ & 562 & (9014) & 70566 & 9.63 \\ 
 & \multirowcell{-3}{Broximal-A} & $10^{-2}$ & 4 & (7430) & 73957 & 9.64 \\ 
\hhline{*1{~}|*6{-}|}
 &  & $10^{-3}$ & 6002 & (12819) & 62685 & 11.15 \\ 
 &  & $10^{-2}$ & 1462 & (8894) & 61259 & 9.44 \\ 
 & \multirowcell{-3}{Broximal-P} & $10^{-1}$ & 196 & (11968) & 92612 & 14.84 \\ 
\hhline{*1{~}|*6{-}|}
 &  & $10^{-3}$ & 5 & (8539) & 85043 & 10.96 \\ 
 &  & $10^{-1}$ & 8 & (14289) & 142297 & 18.84 \\ 
 & \multirowcell{-3}{Proximal} & $2$ & 37 & (52428) & 522109 & 97.86 \\ 
\hhline{*1{~}|*6{-}|}
\multirowcell{-13}{1000} & Gradient & -- & \multicolumn{2}{c}{7419} & 73890 & 40.25 \\ 
\hline

\end{tabular}
\end{minipage}
\caption{Numerical results for selected dimensions and representative parameter values. 
In the column \emph{Param.}, the reported value denotes the fixed radius $t$ for Broximal-F, 
the scaling parameter $\alpha$ for Broximal-A, the scaling parameter $\beta$ for Broximal-P, 
and the regularization parameter $\lambda$ for the Proximal algorithm.}
\label{tab:results}
\end{table}

The main observation is that the Broximal variants are competitive with, and often outperform, both the proximal point method and the direct gradient method in terms of function evaluations and CPU time. Considering, for each dimension, the Broximal variant with the smallest number of function evaluations, the required number of function evaluations was, on average, about \(59\%\) of that required by the Gradient method and about \(53\%\) of that required by the best Proximal variant. No single Broximal radius rule is uniformly superior across all tested dimensions, namely,  depending on the instance, the best performance is attained by Broximal-A, Broximal-P, or Broximal-F. In particular, the results suggest that adaptive radii based either on the gradient norm or on a Polyak-type scaling can provide a useful balance between progress in the outer iteration and the cost of solving the ball subproblem.

The results also show that the number of outer iterations alone is not a reliable measure of efficiency. For instance, in Broximal-A, larger adaptive radii may reduce the number of outer iterations to only a few steps, but at the price of increasing the cost of the inner solves. This is observed for \(\alpha=10^{-2}\), for which the number of outer iterations drops to four in all tested dimensions, while the total number of function evaluations becomes comparable to that of the direct Gradient method, indicating that most of the computational effort is transferred to the subproblem solver. A related effect is also observed in Broximal-P, where a reduction in outer iterations does not automatically translate into a lower overall cost. Thus, the total number of function evaluations and the CPU time provide a more meaningful comparison. Figure~\ref{fig:funeval} illustrates this point by reporting the number of function evaluations as a function of the dimension for one representative parameter choice of each method. The curves corresponding to the adaptive Broximal variants, namely Broximal-A with \(\alpha=10^{-3}\) and Broximal-P with \(\beta=10^{-2}\), are frequently below the Gradient and Proximal curves. This indicates that adaptive radius strategies can lead to a lower computational cost in terms of function evaluations.

\begin{figure}[h] \centering
\includegraphics[scale=0.45]{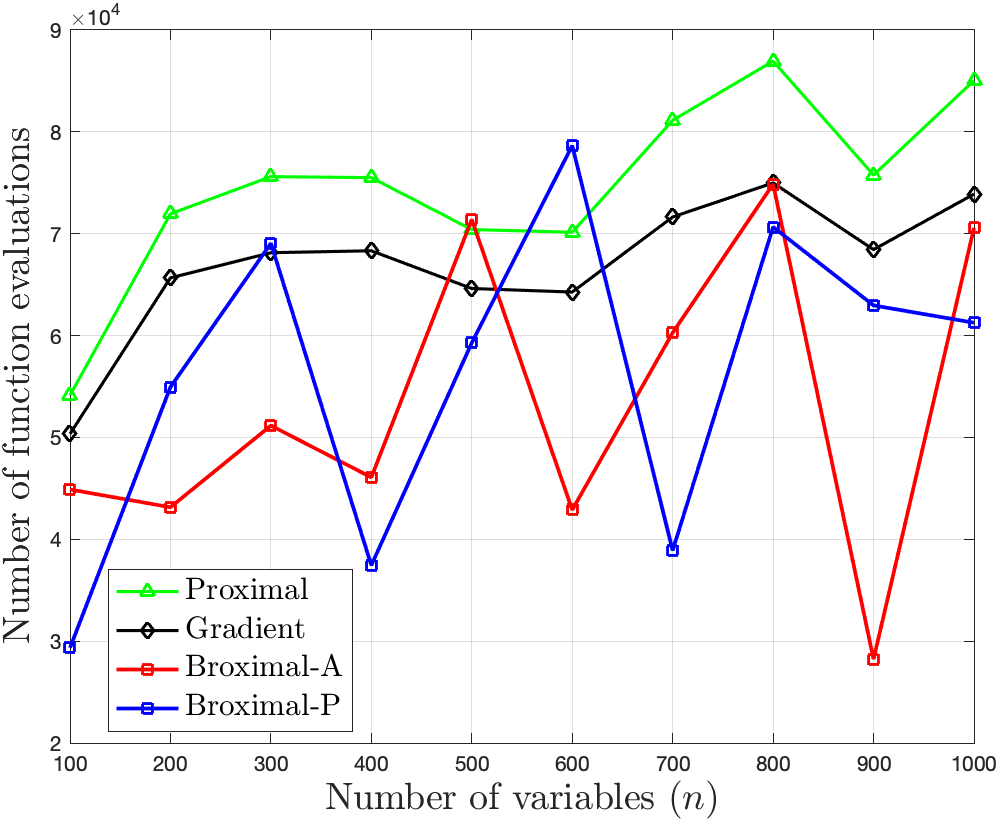}
\caption{Number of function evaluations for representative parameter choices: Broximal-A ($\alpha=10^{-3}$), Broximal-P ($\beta=10^{-2}$), the Proximal method ($\lambda=10^{-3}$), and the Gradient method. Lower curves indicate lower computational cost.}
\label{fig:funeval}
\end{figure}

The fixed-radius variant Broximal-F also performs well for suitable choices of the radius, and in some dimensions it gives the best overall performance. This suggests that both fixed and variable radius strategies can be effective, although their performance depends on the specific rule and parameter choice. In contrast, the Proximal point method is more sensitive to the regularization parameter,  small values of \(\lambda\) lead to the best results among the tested choices, whereas larger values substantially increase the cost of the inner solves.

We now complement the results with a qualitative illustration. Figure~\ref{fig:traj} compares the sequence of points generated by the Gradient method with the sequence of points produced by the Broximal-A scheme on a two-dimensional quadratic problem. At each outer iteration, the corresponding ball-constrained subproblem is approximately solved by projected gradient, initialized at the last iterate obtained for the previous subproblem. Consequently, the plotted Broximal-A path includes both the outer iterates and the intermediate projected-gradient iterates arising in the inner solves. The figure suggests that the ball-constrained mechanism may attenuate the zig-zag pattern typical of gradient descent in ill-conditioned quadratic valleys.

\begin{figure}[h!] \centering
\begin{tabular}{cc}
(a) Gradient & (b) Broximal-A\\
\includegraphics[scale=0.45]{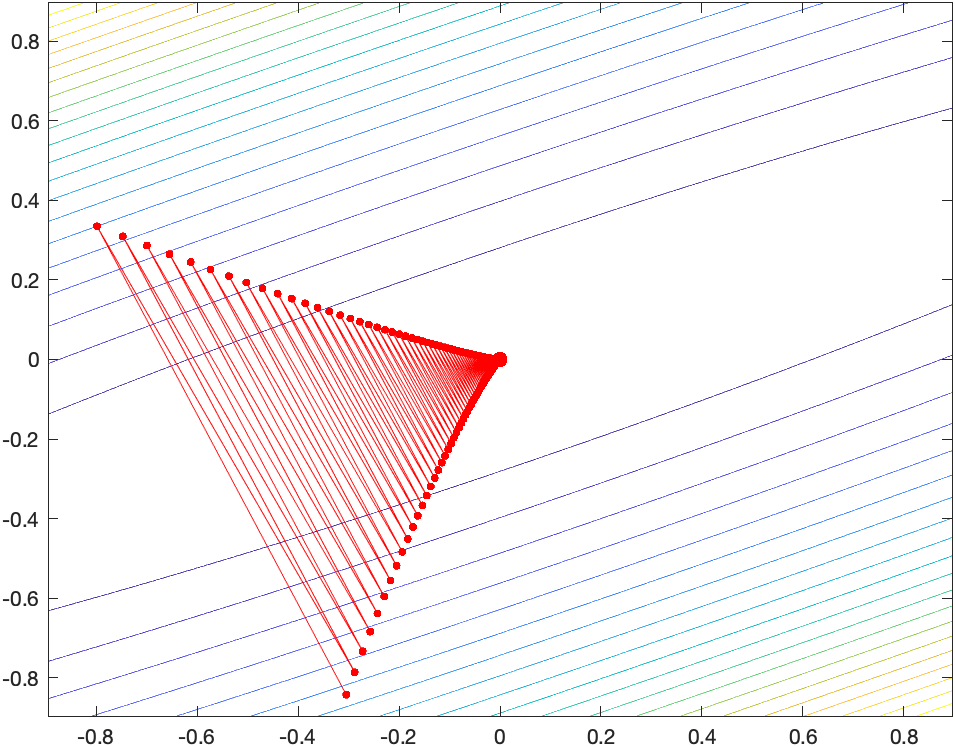} & \includegraphics[scale=0.45]{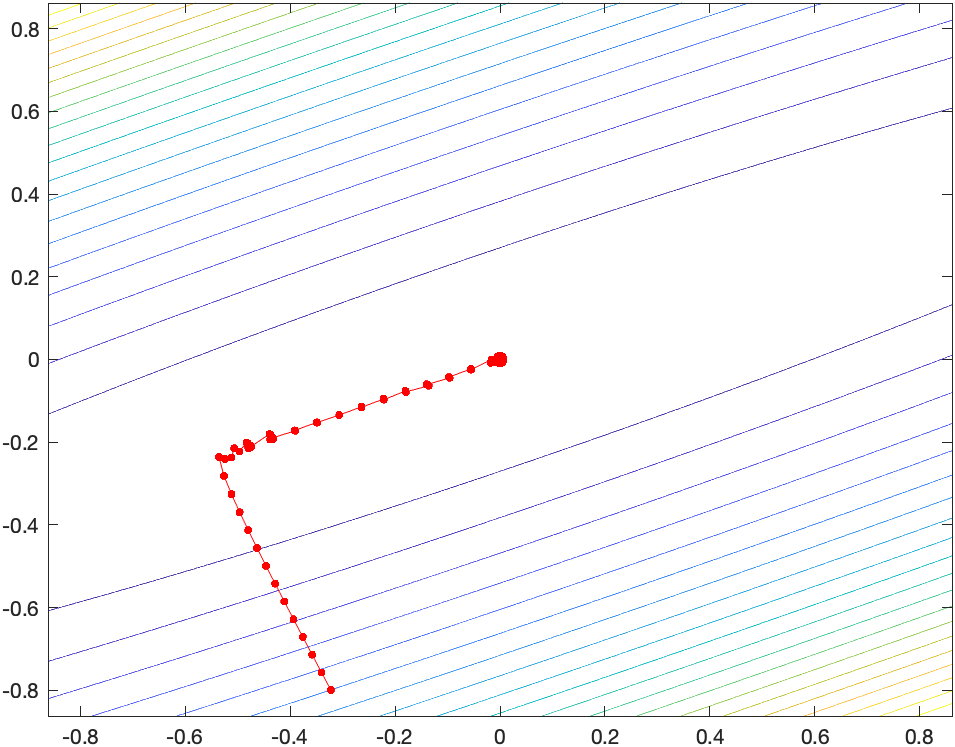} \\
\end{tabular}
\caption{Illustrative pointwise paths on a two-dimensional strongly convex quadratic problem. (a) Iterates generated by the Gradient method with Armijo line search. (b) Points generated by the Broximal-A implementation, including both the outer iterates and the intermediate iterates produced by the projected-gradient solver used to approximately solve each ball-constrained subproblem, initialized from the last iterate of the previous subproblem.}
\label{fig:traj}
\end{figure}

Overall, the preliminary experiments in this section suggest that inexact ball-proximal schemes behave in a stable and competitive way on this simple Euclidean quadratic benchmark. Of course, the purpose here is not to advocate these methods as practical solvers for quadratic problems, for which more specialized approaches are available, but rather to use a classical and controlled setting to isolate their algorithmic behavior. In this perspective, the experiments highlight that the choice of the radius plays a central role in performance. A fixed radius may be effective when properly tuned, but its choice can be delicate because the appropriate scale depends on the problem. The adaptive strategies, by adjusting the radius according to first-order information, provide a simple way to incorporate local information into the step-size control, which makes them promising candidates for future studies in more general settings, where a suitable fixed radius may be harder to prescribe.

%%%%%%%%%%%%%%%%%%%%%%%%%%%%%%%
\section{Final remarks} \label{Sec:SolSupProRB-PPM}

The RB--PPM may be viewed as a ball-constrained counterpart of the classical Riemannian proximal point method. In the latter, one solves the regularized subproblem \eqref{eq:ppm}, whereas RB--PPM replaces the quadratic penalization by the ball-constrained step \eqref{eq:bpmsub}. Thus, both methods control the iterates by modifying the original problem, but one does so through penalization and the other through localization.

What is distinctive in the ball-proximal framework is that the parameter \(t_k\) has a direct geometric meaning, namely,  it is precisely the radius of the region in which the next iterate is sought. In particular, when \(\mathbb{B}(p_k,t_k)\cap\Omega^*=\varnothing\), Theorem~\ref{thm:first-brox-Riem} shows that the broximal point is uniquely defined on the sphere \(\mathbb{S}(p_k,t_k)\) and satisfies \(d(p_{k+1},p_k)=t_k\). Combined with the distance decrease \eqref{eq:distance-drop}, this yields the finite-termination result of Theorem~\ref{thm:icfv}, the linear and \(O(1/K)\) function-value bounds in Theorems~\ref{th:linear} and~\ref{thm:D3}, and the asymptotic dichotomy of Theorem~\ref{thm:dichotomy}. In this sense, RB--PPM provides a complementary perspective to classical proximal regularization, emphasizing geometric features that are specific to ball-constrained updates. At the same time, the method studied here remains idealized, since each iteration requires the exact minimization of \(f\) over a geodesic ball. Its main role is therefore conceptual. In fact,  it serves as a theoretical benchmark for understanding what can be expected from algorithms that only approximate ball-constrained subproblems.  A natural next step is to develop implementable inexact variants, in which each broximal subproblem is solved only approximately by a finite number of inner iterations of a suitable Riemannian method, together with error criteria compatible with the outer convergence analysis. It would also be important to investigate new adaptive strategies for choosing the radii, balancing progress in the outer iteration with the computational cost of the inner solves, as well as to analyze their impact on convergence and complexity guarantees.

Among possible radius choices, a natural candidate is the Polyak-type rule
\begin{equation} \label{eq:psz}
t_k=\min\Bigl\{t_{\max},\max\bigl\{t_{\min},\beta\,\tfrac{f(p_k)-\ell_k}{\|v_k\|}\bigr\}\Bigr\},
\qquad v_k\in\partial f(p_k),
\end{equation}
where \(\beta>0\) and \(\ell_k\le f^*\) is a lower estimate of the optimal value. This type of choice is consistent with step-length rules commonly used in practical implementations of subgradient methods with Polyak stepsizes and related level-type strategies, where the displacement is scaled by the ratio between an objective gap estimate and the norm of a first-order residual.  This is the natural ball-radius analogue of the classical Polyak stepsize, since in RB--PPM the radius plays the role of a step-length parameter and exact boundary steps satisfy \(d(p_{k+1},p_k)=t_k\). Moreover, under convexity, item~(i) of Proposition~\ref{pr:f-convex-subdiff} yields
\[
0<\frac{f(p_k)-f^*}{\|v_k\|}\le \operatorname{dist}(p_k,\Omega^*),
\]
so the rule automatically produces smaller radii near the solution set and allows larger displacements only when the objective gap is significant relative to the norm of a first-order residual. This suggests that adaptive radius rules of Polyak type may be a promising ingredient in future practical implementations of RB--PPM.

Finally, it would be interesting to extend the present analysis beyond the convex setting and to investigate whether analogous geometric, asymptotic, and complexity properties remain valid for suitable nonconvex formulations on a Hadamard manifolds.

%%%%%%%%%%%%%%%%%%%%%%%%%%%%%%%
\bibliographystyle{abbrv}
\bibliography{BroximalRiemannian}

\end{document}